\newtheorem{theorem}{Theorem}
\newtheorem{corollary}[theorem]{Corollary}
\newtheorem{lemma}[theorem]{Lemma}
\newtheorem{proposition}[theorem]{Proposition}
\newtheorem{remark}[theorem]{Remark}
\newtheorem{definition}[theorem]{Definition}
\numberwithin{theorem}{section}
\numberwithin{figure}{section}
\numberwithin{equation}{section}
\DeclareMathOperator{\dist}{dist}
\DeclareMathOperator{\SLE}{SLE}
\DeclareMathOperator{\diam}{diam}
\newcommand{\EE}{\mathbb{E}}
\newcommand{\pa}{\partial}
\newcommand{\no}{\noindent}
\newcommand{\BGE}{\begin{equation}}
\newcommand{\BGEN}{\begin{equation*}}
\newcommand{\EDE}{\end{equation}}
\newcommand{\EDEN}{\end{equation*}}
\def\sem{\setminus}
\def\lin{\overline}
\DeclareMathOperator{\hcap}{hcap} \DeclareMathOperator{\id}{id}
 \DeclareMathOperator{\doub}{doub}
\DeclareMathOperator{\hm}{hm}
\begin{document}

\title{Boundary Arm Exponents for SLE}
\author{Hao Wu and Dapeng Zhan}
\date{\today}


%
%
\maketitle

\abstract{We derive boundary arm exponents for SLE. Combining with the convergence of critical lattice models to SLE, these exponents would give the  alternating half-plane arm exponents for the corresponding lattice models.\\
\noindent\textbf{Keywords:} Schramm Loewner Evolution, boundary arm exponents.}
\newcommand{\eps}{\epsilon}
\newcommand{\ov}{\overline}
\newcommand{\U}{\mathbb{U}}
\newcommand{\T}{\mathbb{T}}
\newcommand{\HH}{\mathbb{H}}
\newcommand{\LA}{\mathcal{A}}
\newcommand{\LC}{\mathcal{C}}
\newcommand{\LD}{\mathcal{D}}
\newcommand{\LF}{\mathcal{F}}
\newcommand{\LK}{\mathcal{K}}
\newcommand{\LE}{\mathcal{E}}
\newcommand{\LL}{\mathcal{L}}
\newcommand{\LU}{\mathcal{U}}
\newcommand{\LV}{\mathcal{V}}
\newcommand{\LZ}{\mathcal{Z}}
\newcommand{\LH}{\mathcal{H}}
\newcommand{\R}{\mathbb{R}}
\newcommand{\C}{\mathbb{C}}
\newcommand{\N}{\mathbb{N}}
\newcommand{\Z}{\mathbb{Z}}
\newcommand{\E}{\mathbb{E}}
\newcommand{\PP}{\mathbb{P}}
\newcommand{\QQ}{\mathbb{Q}}
\newcommand{\A}{\mathbb{A}}
\newcommand{\bn}{\mathbf{n}}
\newcommand{\MR}{MR}
\newcommand{\cond}{\,|\,}
\newcommand{\la}{\langle}
\newcommand{\ra}{\rangle}
\newcommand{\tree}{\Upsilon}
\setstcolor{blue}

\section{Introduction}\label{sec::introduction}
Schramm-Loewner evolution (SLE) was introduced by Oded Schramm \cite{SchrammFirstSLE} as the candidates for the scaling limits of interfaces in 2D critical lattice models. It is a one-parameter family of random fractal curves in simply connected domains from one boundary point to another boundary point, which is indexed by a positive real $\kappa$.
Since its introduction, it has been proved to be the limits of several lattice models:
$\SLE_2$ is the limit of Loop Erased Random Walk and $\SLE_8$ is the limit of the Peano curve of Uniform Spanning Tree \cite{LawlerSchrammWernerLERWUST}, $\SLE_3$ is the limit of the interface in critical Ising model and $\SLE_{16/3}$ is the limit of the interface in FK-Ising model \cite{CDCHKSConvergenceIsingSLE}, $\SLE_4$ is the limit of the level line of discrete Gaussian Free Field \cite{SchrammSheffieldDiscreteGFF} and $\SLE_6$ is the limit of the interface in critical Percolation \cite{SmirnovPercolationConformalInvariance}.

In the study of lattice models, arm exponents play an important role. Take percolation for instance, Kesten has shown that \cite{KestenScalingRelationPercolation} in order to understand the behavior of percolation near its critical point, it is sufficient to study what happens at the critical point, and many results would follow from the existence and values of the arm exponents. To be more precise, consider critical percolation with fixed mesh equal to 1, and for $n\ge 2$, consider the the event $E_n(z, r, R)$ that there exist $n$ disjoint crossings of the annulus $A_z(r, R):=\{w\in\C: r<|w-z|<R\}$, not all of the same color. People would like to understand the decaying of the probability of $E_n(z, r,R)$ as $R\to\infty$. It turns out that this probability decays like a power in $R$, and the exponent is called plane arm exponents.
There are another related quantities, called half-plane arm exponents. In this case, consider critical percolation in the upper-half plane $\HH$, and for $n\ge 1, x\in\R$, define $H_n(x, r, R)$ to be the event that there exist $n$ disjoint crossings of the semi-annulus $A_x^+(r, R):=\{w\in\HH: r<|w-x|<R\}$.
After the identification between $\SLE_6$ and the limit of critical percolation on triangular lattice \cite{SmirnovPercolationConformalInvariance}, one could derive these exponents via the corresponding arm exponents for $\SLE_6$ \cite{SmirnovWernerCriticalExponents}:
\[\PP\left[E_n(z, r,R)\right]= R^{-\alpha_n+o(1)},\quad \PP\left[H_n(x, r,R)\right]=R^{-\alpha_n^++o(1)},\quad \text{as }R\to\infty,\]
where
\[\alpha_n:=(n^2-1)/12,\quad \alpha_n^+:=n(n+1)/6.\]

In this paper, we derive boundary arm exponents for $\SLE_{\kappa}$. Combining with the identification between the limit of critical lattice model and $\SLE$ curves, these exponents for $\SLE$ would imply the arm exponents for the corresponding lattice models.

Fix $\kappa>4$ and let $\eta$ be an $\SLE_{\kappa}$ in $\HH$ from 0 to $\infty$. Suppose that $y\le 0<\eps\le x$ and let $T$ be the first time that $\eta$ swallows the point $x$ which is almost surely finite when $\kappa>4$. We first define the crossing event $H_{2n-1}$ (resp. $\hat{H}_{2n}$) that $\eta$ crosses between the ball $B(x,\eps)$ and the half-infinite line $(-\infty, y)$ at least $2n-1$ times (resp. at least $2n$ times) for $n\ge 1$. To be precise with the definition, we need to introduce a sequence of stopping times.
Set $\tau_0=\sigma_0=0$. Let $\tau_1$ be the first time that $\eta$ hits the ball $B(x,\eps)$ and let $\sigma_1$ be the first time after $\tau_1$ that $\eta$ hits $(-\infty, y)$. For $n\ge 1$, let $\tau_n$ be the first time after $\sigma_{n-1}$ that $\eta$ hits the connected component of $\partial B(x,\eps)\setminus \eta[0,\sigma_{n-1}]$ containing $x+\eps$ and let $\sigma_n$ be the first time after $\tau_n$ that $\eta$ hits $(-\infty, y)$. Define $H_{2n-1}(\eps, x, y)$ to be the event that $\{\tau_n<T\}$.
Define $\hat{H}_{2n}(\eps, x, y)$ to be the event that $\{\sigma_n<T\}$.
In the definition of $H_{2n-1}(\eps, x, y)$ and $\hat{H}_{2n}(\eps, x, y)$,  we are particular interested in the case when $x$ is large. Roughly speaking, the event $H_{2n-1}(\eps, x, y)$ means that $\eta$ makes at least $(2n-1)$ crossings between $B(x,\eps)$ and $(-\infty, y)$. Imagine that $\eta$ is the interface in the discrete model, then $H_{2n-1}(\eps,x,y)$ interprets the event that there are $2n-1$ arms going from $B(x,\eps)$ to far away place. The event $\hat{H}_{2n}(\eps, x, y)$ means that $\eta$ makes at least $2n$ crossings between $B(x,\eps)$ and $(-\infty, y)$. Imagine that $\eta$ is the interface in the discrete model, then $\hat{H}_{2n}(\eps,x,y)$ interprets the event that there are $2n$ arms going from $B(x,\eps)$ to far away place. See Figure \ref{fig::boundaryarms}(a).

\begin{figure}[ht!]
\begin{subfigure}[b]{0.49\textwidth}
\begin{center}
\includegraphics[width=0.9\textwidth]{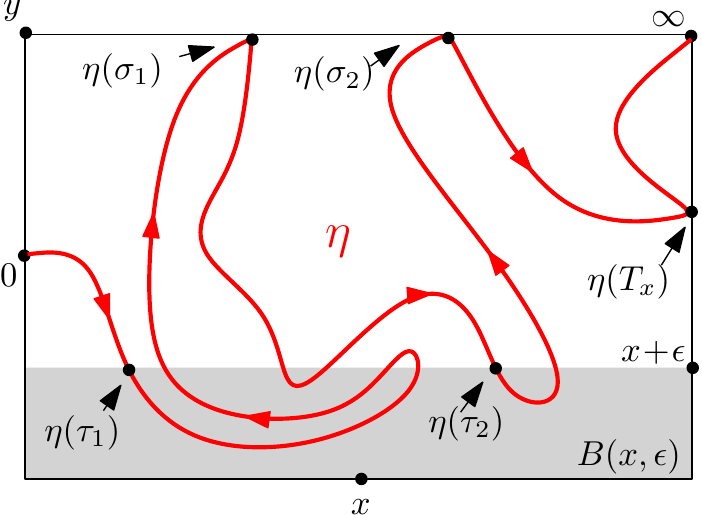}
\end{center}
\caption{This figure indicates $\hat{H}_4$. The stopping times\\ $\tau_1<\sigma_1<\tau_2<\sigma_2<T_x$ are indicated in the figure. }
\end{subfigure}
\begin{subfigure}[b]{0.49\textwidth}
\begin{center}\includegraphics[width=0.9\textwidth]{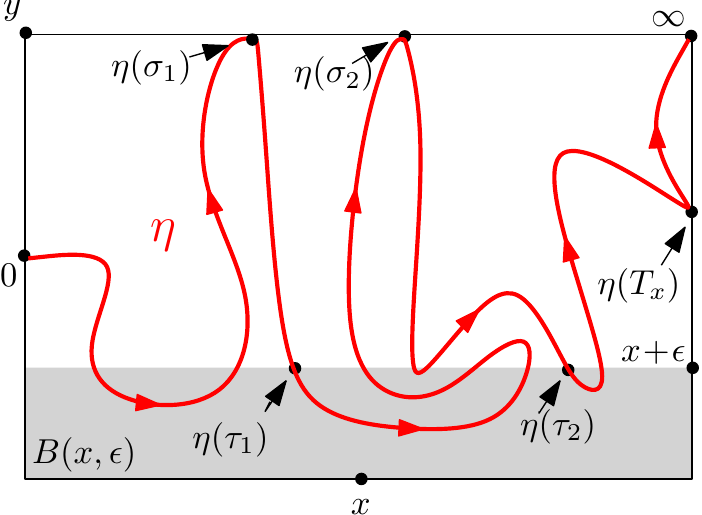}
\end{center}
\caption{This figure indicates $H_4$. The stopping times\\ $\sigma_1<\tau_1<\sigma_2<\tau_2<T_x$ are indicated in the figure. }
\end{subfigure}
\caption{\label{fig::boundaryarms} The explanation of the definition of the crossing events. The gray part is the ball $B(x,\eps)$.}
\end{figure}

Next, we define the crossing event $H_{2n}$ (resp. $\hat{H}_{2n+1}$) that $\eta$ crosses between the half-infinite line $(-\infty, y)$ and the ball $B(x,\eps)$ at least $2n$ times (resp. at least $2n+1$ times) for $n\ge 0$. Set $\tau_0=\sigma_0=0$. Let $\sigma_1$ be the first time that $\eta$ hits $(-\infty, y)$ and $\tau_1$ be the first time after $\sigma_1$ that $\eta$ hits the connected component of $\partial B(x,\eps)\setminus \eta[0,\sigma_1]$ containing $x+\eps$. For $n\ge 1$, let $\sigma_n$ be the first time after $\tau_{n-1}$ that $\eta$  hits $(-\infty, y)$ and $\tau_n$ be the first time after $\sigma_n$ that $\eta$ hits the connected component of $\partial B(x,\eps)\setminus \eta[0,\sigma_n]$ containing $x+\eps$.
Define $H_{2n}(\eps, x, y)$ to be the event that $\{\tau_{n}<T\}$.
Define $\hat{H}_{2n+1}(\eps, x, y)$ to be the event that $\{\sigma_{n+1}<T\}$.
In the definition of $H_{2n}(\eps, x, y)$ and $\hat{H}_{2n+1}(\eps, x, y)$. we are interested in the case when $x$ is of the same size as $\eps$ and $y$ is large. Roughly speaking, the event $H_{2n}(\eps, x,y)$ means that $\eta$ makes at least $2n$ crossings between $(-\infty, y)$ and $B(x,\eps)$. Imagine that $\eta$ is the interface in the discrete model, then $H_{2n}(\eps, x, y)$ interprets the event that there are $2n$ arms going from $B(x,\eps)$ to far away place.
The event $\hat{H}_{2n+1}(\eps, x,y)$ means that $\eta$ makes at least $2n+1$ crossings between $(-\infty, y)$ and $B(x,\eps)$. Imagine that $\eta$ is the interface in the discrete model, then $\hat{H}_{2n+1}(\eps, x, y)$ interprets the event that there are $2n+1$ arms going from $B(x,\eps)$ to far away place.
See Figure \ref{fig::boundaryarms}(b).

Note that in the definition of $H_{2n-1}$ and $\hat{H}_{2n}$, we start from $\tau_1$ and
\[H_{2n-1}(\eps, x, y)=\{\tau_1<\sigma_1<\tau_2<\cdots<\tau_n<T\},\quad
\hat{H}_{2n}(\eps, x, y)=\{\tau_1<\sigma_1<\tau_2<\cdots<\tau_n<\sigma_n<T\}.\]
In the definition of $H_{2n}$ and $\hat{H}_{2n+1}$, we start from $\sigma_1$ and
\[H_{2n}(\eps, x, y)=\{\sigma_1<\tau_1<\sigma_2<\cdots<\tau_n<T\},\quad \hat{H}_{2n+1}(\eps, x, y)=\{\sigma_1<\tau_1<\sigma_2<\cdots<\tau_n<\sigma_{n+1}<T\}.\] The two sequences of stopping times are defined in different ways. Readers may wander why we do not define the events using the same sequence of stopping times. We realize that the definition using the same sequence of stopping times causes ambiguity. Therefore, we decide to define these events in the above way. The advantages of the current definition will become clear in the proofs.

We define the arm exponents as follows. Set $\alpha_0^+=0$. For $n\ge 1$ and $\kappa\in (0,8)$, define
\begin{equation}\label{eqn::boundary_arm_formula_less8}
\alpha_{2n-1}^+=n(4n+4-\kappa)/\kappa,\quad \alpha^+_{2n}=n(4n+8-\kappa)/\kappa.
\end{equation}
For $n\ge 1$ and $\kappa\ge 8$, define
\begin{equation}\label{eqn::boundary_arm_formula_greater8}
\alpha_{2n-1}^+=(n-1)(4n+\kappa-8)/\kappa,\quad \alpha_{2n}^+=n(4n+\kappa-8)/\kappa.
\end{equation}

\begin{theorem}\label{thm::boundary_arm_greater4}
Fix $\kappa>4$. The crossing events $H_{2n-1}(\eps, x, y)$ and $H_{2n}(\eps, x, y)$ are defined as above.
Then, for any $y\le 0<\eps\le x$ and $n\ge 1$, we have
\begin{equation}\label{eqn::boundary_arm_greater4_odd}
\PP[H_{2n-1}(\eps, x, y)]\asymp \left(\frac{x}{x-y}\right)^{\alpha_{2n-2}^+}\left(\frac{\eps}{x}\right)^{\alpha_{2n-1}^+},
\end{equation}
\begin{equation}\label{eqn::boundary_arm_greater4_even}
\PP[H_{2n}(\eps, x, y)]\asymp \left(\frac{x}{x-y}\right)^{\alpha_{2n}^+}\left(\frac{\eps}{x}\right)^{\alpha_{2n-1}^+},
\end{equation}
where the constants in $\asymp$ depend only on $\kappa$ and $n$. In particular, fix some $\delta>0$, we have
\[\PP[H_{2n-1}(\eps, x, y)]\asymp \eps^{\alpha_{2n-1}^+},\quad\text{provided }\delta\le x\le 1/\delta, -1/\delta\le y\le 0,\]
\[\PP[H_{2n}(\eps, x, y)]\asymp\eps^{\alpha_{2n}^+},\quad \text{provided }\eps\le x\le \eps/\delta, -1/\delta\le y\le -\delta,\]
where the constants in $\asymp$ depend only on $\kappa, n$ and $\delta$.
\end{theorem}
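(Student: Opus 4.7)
The plan is to prove Theorem~\ref{thm::boundary_arm_greater4} by induction on $n$, using the conformal Markov property of SLE together with the Radon--Nikodym machinery of $\SLE_\kappa(\rho)$ processes. The central device is the three-point positive local martingale attached to the two marked boundary points: writing $W_t$ for the driving function, $V_t = g_t(x)$, $Y_t = g_t(y)$, a standard Itô computation shows that
\[
M_t = (V_t - W_t)^{a}(V_t - Y_t)^{b}(W_t - Y_t)^{c}
\]
is a positive local martingale for a one-parameter family $(a,b,c)$ satisfying the usual $\SLE_\kappa(\rho)$ commutation relations. Under the reweighted measure $\PP^{*} = (M_t/M_0)\cdot\PP$, the curve $\eta$ becomes an $\SLE_\kappa(\rho_+,\rho_-)$ with force points at $x$ and $y$, where $\rho_\pm$ are explicit functions of $(a,b,c,\kappa)$. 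For the exponents $(a,b,c)$ calibrated to the target $\alpha^+_{2n-1}$, $\alpha^+_{2n-2}$, the reweighted process is the "conditioned" law in which the $n$ crossings occur with probability bounded below.

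Second, I would settle the base case $n=1$. The event $H_1(\eps,x,y) = \{\tau_1 < T\}$ is the classical boundary one-arm event that $\eta$ hits $B(x,\eps)$ before swallowing $x$; applying optional stopping to $M$ with $b=c=0$, together with $V_{\tau_1} - W_{\tau_1} \asymp \eps\,|g'_{\tau_1}(x)|$ and the usual capacity/distortion bounds, yields $\PP[H_1]\asymp(\eps/x)^{\alpha_1^+}$. The case $H_2$ introduces the left marker $y$ and uses the full martingale with nonzero $b,c$, combining stopping at $\tau_1$ with a further stopping at $\sigma_1$. The factor $(x/(x-y))^{\alpha_0^+} = 1$ in (\ref{eqn::boundary_arm_greater4_odd}) at $n=1$ correctly records that no left-side contribution has yet accrued.

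Third, for the inductive step, condition at $\sigma_k$ (resp.\ $\tau_k$) and invoke the conformal Markov property: under $g_{\sigma_k}$, the law of $\eta[\sigma_k,\infty)$ is again $\SLE_\kappa$ in $\HH$ started at $W_{\sigma_k}$. The remaining crossing event is of the same type as $H_{2n-3}$ or $H_{2n-2}$, but with the two scales $\eps/x$ and $x/(x-y)$ transformed in a controlled way. The inductive hypothesis applied to the image configuration, combined with the factored form of $M$, produces the claimed product $(x/(x-y))^{\alpha_{2n-2}^+}(\eps/x)^{\alpha_{2n-1}^+}$; the exponents in (\ref{eqn::boundary_arm_formula_less8})--(\ref{eqn::boundary_arm_formula_greater8}) are exactly what is needed for the ratios to telescope correctly across both sides of $\asymp$.

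The main obstacle I anticipate is the matching lower bound. The upper bound follows cleanly from optional stopping $\EE[M_{\tau_n\wedge T}]\leq M_0$ together with a geometric estimate of $M_{\tau_n}$ on the crossing event, using Koebe distortion for $|g'_{\tau_n}(x)|$, half-plane capacity bounds for $V_t - Y_t$, and standard boundary hitting estimates. For the lower bound, one must show that under $\PP^{*}$ the $\SLE_\kappa(\rho)$ curve actually completes all $n$ crossings with probability \emph{uniformly} bounded below throughout the regime $\eps\le x$, $y\le 0$. This requires an explicit geometric construction inside the slit domain after each crossing, and the two scaling regimes $x\sim\eps$ (relevant for $H_{2n}$) and $|y|\sim x$ (relevant for $H_{2n-1}$) must be handled separately, since the two force points influence the curve quite differently in each. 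A secondary issue is the bookkeeping: the sequences of stopping times start at $\tau_1$ for $H_{2n-1}$ but at $\sigma_1$ for $H_{2n}$, so the induction must track which of the two exponents $\alpha^+_{2k-1}$ or $\alpha^+_{2k}$ is being accumulated at each step.
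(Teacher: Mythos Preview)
Your inductive skeleton (conformal Markov plus a Girsanov tilt, alternating between the stopping times $\tau_k$ and $\sigma_k$) matches the paper's structure. But the central device you propose does not work as written. The expression $M_t=(V_t-W_t)^a(V_t-Y_t)^b(W_t-Y_t)^c$ is \emph{not} a local martingale for a one-parameter family $(a,b,c)$: a direct It\^o computation forces $a,c\in\{0,\,1-4/\kappa\}$ and $b=\kappa ac/2$, so you get only finitely many choices. The genuine one-parameter family (Lemma~\ref{lem::sle_mart}) carries the derivative factors $g_t'(x)^{\rho_R(\rho_R+4-\kappa)/(4\kappa)}$ and $g_t'(y)^{\rho_L(\rho_L+4-\kappa)/(4\kappa)}$, and once you include them the factor $g_t'(y)$ becomes meaningless after the first time $\eta$ swallows $y$ --- which for $\kappa>4$ happens long before the later crossings in $H_{2n}$. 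This is why the paper never puts a force point at $y$: it uses only the one-point martingale $M_t=g_t'(x)^{\nu(\nu+4-\kappa)/(4\kappa)}(g_t(x)-W_t)^{\nu/\kappa}$ and handles the $y$-dependence through the deterministic monotonicity $t\mapsto g_t(x)-g_t(y)$ (with $g_t(y)$ redefined as the image of the leftmost real point once $y$ is swallowed). The exponents then match via the recursions $u_2(\alpha_{2n-1}^+)=\alpha_{2n}^+-\alpha_{2n-1}^+$ and $u_1(\alpha_{2n}^+)=\alpha_{2n+1}^+-\alpha_{2n}^+$, not via a two-force-point commutation relation.

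Your lower-bound plan also diverges from what actually succeeds. You propose to show that under the tilted measure $\PP^*$ the curve completes all $n$ crossings with uniformly positive probability; this would require uniform control of an $\SLE_\kappa(\rho_+,\rho_-)$ with two moving force points across the whole parameter range, which is at best delicate. The paper instead applies the induction hypothesis \emph{inside} the conditional expectation after a single crossing and reduces the lower bound to controlling $\E^*\big[(g_T(x)-g_T(y))^{-\alpha}\big]$ under the one-point tilt. That moment bound is obtained not by geometry but by a stochastic-domination/coupling argument (Lemmas~\ref{lem::stochasticbound1} and~\ref{lem::stochasticbound2}): one embeds the process with general $(x,y)$ into a fixed reference process with $x-y=1$, so that $(g_T(x)-g_T(y))/(x-y)$ is dominated by a random variable whose law depends only on $\kappa$ and $\nu$. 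This idea is the missing ingredient in your sketch; without it the ``uniform'' in your lower bound has no mechanism.
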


By a similar proof, we could obtain a similar result as Theorem \ref{thm::boundary_arm_greater4} for $\SLE_{\kappa}(\rho)$ curve in the case that $x$ coincides with the force point. The exponents and a complete proof can be found in \cite[Section 3]{WuPolychromaticArmFKIsing}, where the conditions are loosen such that the force point may not be equal to $x$. One may also study the arm exponents for $\kappa\in(0,4]$. Whereas, when $\kappa\le 4$, the $\SLE$ curve does not touch the boundary, thus the above definition of the crossing events is not proper for $\kappa\le 4$. In Section \ref{sec::boundary_less4}, we have Theorem \ref{thm::boundary_arm_alternative} for the crossing events between a small circle and a half-infinite strip, where the arm exponents are defined in the same way as in (\ref{eqn::boundary_arm_formula_less8}). The proof of Theorem \ref{thm::boundary_arm_alternative} also works for $\SLE_{\kappa}(\rho)$ when $x$ coincides with the force point.

\begin{theorem}\label{thm::boundary_arm_hat}
Fix $\kappa\in (4,8)$. Set $\hat{\alpha}_0^+=0$. The crossing events $\hat{H}_{2n}(\eps, x, y)$ and $\hat{H}_{2n+1}(\eps, x, y)$ are defined as above.
For $n\ge 1$, define
\begin{equation}\label{eqn::boundary_arm_formula_hat}
\hat{\alpha}_{2n-1}^+=n(4n+\kappa-8)/\kappa,\quad \hat{\alpha}_{2n}^+=n(4n+\kappa-4)/\kappa.
\end{equation}
Then, for $y\le 0<\eps\le x$ and $n\ge 1$, we have
\begin{equation}\label{eqn::boundary_arm_hat_odd}
\PP\left[\hat{H}_{2n-1}(\eps, x, y)\right]\asymp \left(\frac{x}{x-y}\right)^{\hat{\alpha}_{2n-1}^+}\left(\frac{\eps}{x}\right)^{\hat{\alpha}_{2n-2}^+}, 
\end{equation}
\begin{equation}\label{eqn::boundary_arm_hat_even}
\PP\left[\hat{H}_{2n}(\eps, x, y)\right]\asymp \left(\frac{x}{x-y}\right)^{\hat{\alpha}_{2n-1}^+}\left(\frac{\eps}{x}\right)^{\hat{\alpha}_{2n}^+},
\end{equation}
where the constants in $\asymp$ depend only on $\kappa$ and $n$. In particular, fix some $\delta>0$, we have
\[\PP\left[\hat{H}_{2n-1}(\eps, x, y)\right]\asymp\eps^{\hat{\alpha}_{2n-1}^+},\quad \text{provided } \eps\le x\le \eps/\delta, -1/\delta\le y\le -\delta,\]
\[\PP\left[\hat{H}_{2n}(\eps, x, y)\right]\asymp\eps^{\hat{\alpha}_{2n}^+},\quad \text{provided }\delta\le x\le 1/\delta, -1/\delta\le y\le 0,\]
where the constants in $\asymp$ depend only on $\kappa, n$ and $\delta$.
\end{theorem}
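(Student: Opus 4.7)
The plan is to deduce Theorem \ref{thm::boundary_arm_hat} from Theorem \ref{thm::boundary_arm_greater4} via the strong Markov property of $\eta$ at the last $\tau$-time of the underlying non-hat event. Directly from the stopping-time constructions,
\[
\hat{H}_{2n-1}(\eps,x,y) = H_{2n-2}(\eps,x,y) \cap \{\sigma_n < T\}, \qquad
\hat{H}_{2n}(\eps,x,y) = H_{2n-1}(\eps,x,y) \cap \{\sigma_n < T\},
\]
so each hat event requires one additional crossing from $\partial B(x,\eps)$ down to $(-\infty,y)$ on top of the corresponding non-hat event.

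The first step is to apply Theorem \ref{thm::boundary_arm_greater4} to the non-hat prefix to get its probability. Next, I apply the strong Markov property of $\eta$ at $\tau_{n-1}$ (for $\hat{H}_{2n-1}$) or $\tau_n$ (for $\hat{H}_{2n}$). Conditional on the prefix event and on $\mathcal{F}_{\tau_n}$, the future curve has the law of a chordal SLE$_\kappa$ in the unbounded component of $\HH\setminus \eta[0,\tau_n]$ from $\eta(\tau_n)$ to $\infty$. Conjugating by the centered Loewner map $g_{\tau_n}-W_{\tau_n}$ makes this a standard SLE$_\kappa$ in $\HH$ from $0$ to $\infty$; under this map the half-line $(-\infty,y)$ is sent to a boundary interval $[a,b]\subset(-\infty,0)$, the point $x$ is sent to $X>0$, and the event $\{\sigma_n<T\}$ is exactly the event that this SLE hits $[a,b]$ before swallowing $X$. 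Its probability is computable from the classical SLE$_\kappa$ boundary-hitting formula for $\kappa\in(4,8)$ and decays as a power of the cross-ratio of $a,b,X,\infty$. Matching this power against the non-hat bound from Theorem \ref{thm::boundary_arm_greater4} should yield the asserted exponents, which at the arithmetic level reduces to the identities
\[
\hat{\alpha}_{2n-1}^+ - \alpha_{2n-2}^+ = \frac{(2n-1)(\kappa-4)}{\kappa}, \qquad
\hat{\alpha}_{2n}^+ - \alpha_{2n-1}^+ = \frac{2n(\kappa-4)}{\kappa},
\]
both of which hold by direct substitution into \eqref{eqn::boundary_arm_formula_less8} and \eqref{eqn::boundary_arm_formula_hat} and are precisely the one-arm boundary hitting exponents one expects for the extra $\sigma_n$-crossing.

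The main obstacle is controlling the conformal data $(W_{\tau_n},a,b,X)$ uniformly on the crossing event: after $\eta$ has made $2n-1$ alternating excursions, the images under $g_{\tau_n}$ are a priori not comparable to rescaled versions of $y,\eps,x$, and Loewner distortion estimates together with the crossing pattern are needed to trap the cross-ratio within constants depending only on $\kappa$ and $n$. I expect this to be carried out in lockstep with the proof of Theorem \ref{thm::boundary_arm_greater4}, inductively establishing $H_m$ and $\hat{H}_m$ together for $m\le 2n$ and using the same quadrilateral-modulus / conformal-radius comparisons as there. An alternative is to bypass the explicit Markov reduction by constructing an SLE martingale whose exponents are tuned directly to the hat event; in either route, the delicate step is ensuring that the contribution of the extra $\sigma_n$-crossing is exactly $(2n-1)(\kappa-4)/\kappa$ or $2n(\kappa-4)/\kappa$, without being absorbed into a loss from the prefix.
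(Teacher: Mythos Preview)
Your decomposition $\hat{H}_{2n}=H_{2n-1}\cap\{\sigma_n<T\}$ is correct, but the proposed route --- stop at the \emph{last} $\tau$-time and multiply by a one-arm hitting probability --- has a genuine gap. The exponent differences you computed,
\[
\hat{\alpha}_{2n}^+-\alpha_{2n-1}^+=\frac{2n(\kappa-4)}{\kappa},\qquad
\hat{\alpha}_{2n-1}^+-\alpha_{2n-2}^+=\frac{(2n-1)(\kappa-4)}{\kappa},
\]
are \emph{not} the one-arm boundary exponent $u_2(0)=(\kappa-4)/\kappa$; they grow linearly in $n$. This already shows that $\PP[\sigma_n<T\mid \mathcal F_{\tau_n}]$ is far from constant on $H_{2n-1}$: conditioned on many prior crossings the geometry at $\tau_n$ is heavily distorted, and the extra crossing costs an $n$-dependent power, not a fixed one. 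To carry out your scheme you would need a sharp bound for a specific moment of $(g_{\tau_n}'(x),\,g_{\tau_n}(x)-W_{\tau_n},\,g_{\tau_n}(x)-g_{\tau_n}(y))$ restricted to $H_{2n-1}$, with exponents tuned to $\hat\alpha_{2n}^+$ rather than to $\alpha_{2n}^+$. That is a new computation, not a corollary of Theorem~\ref{thm::boundary_arm_greater4}.

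The paper avoids this by running a \emph{separate} induction for the $\hat H$-family, peeling off crossings at the \emph{front} rather than at the end. The base case is $\hat H_1=\{\sigma<T\}$ (Remark~\ref{rem::hat_initial}). For the inductive step $\hat H_{2n-1}\Rightarrow\hat H_{2n}$ one stops at the \emph{first} time $\eta$ hits $B(x,\eps)$ and reuses the argument of Section~\ref{subsec::greater4_even_odd} verbatim, with the arithmetic identity $\hat\alpha_{2n}^+=u_1(\hat\alpha_{2n-1}^+)+\hat\alpha_{2n-1}^+$ replacing (\ref{eqn::from2nto2n+1}); for $\hat H_{2n}\Rightarrow\hat H_{2n+1}$ one stops at the first hit of $(-\infty,y)$ and reuses Section~\ref{subsec::greater4_odd_even}, with $\hat\alpha_{2n+1}^+=u_2(\hat\alpha_{2n}^+)+\hat\alpha_{2n}^+$ replacing (\ref{eqn::from2n-1to2n}). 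The point is that at the \emph{first} hitting time the conformal data is exactly what Proposition~\ref{prop::sle_boundary_estimate} and Lemma~\ref{lem::estimate_derivative_even} control, so no new estimate is needed --- only the two recursion identities for $\hat\alpha^+$ have to be checked. Your ``in lockstep'' alternative is close to this in spirit, but the correct ordering is to stop at the first crossing and invoke the $(m-1)$-crossing estimate for the future, not to stop at the last crossing and try to control the past.
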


It is worthwhile to spend some more words on the relation between $\alpha_n^+$ and $\hat{\alpha}_n^+$. In fact, we can also define the crossing events $\hat{H}_n(\eps, x, y)$ for $\kappa\in [0,4]$ and $\kappa\ge 8$. When $\kappa\le 4$, the $\SLE$ curve does not touch the boundary, thus the exponent $\hat{\alpha}_n^+$ coincides with $\alpha_{n-1}^+$. When $\kappa\ge 8$, the $\SLE$ curve is space-filling, thus the exponent $\hat{\alpha}_{n}^+$ coincides with $\alpha_{n+1}^+$. Whereas, when $\kappa\in (4,8)$, the exponent $\hat{\alpha}_n^+$ is distinct from $\alpha_n^+$ in general. In terms of discrete model, both $\alpha_n^+$ and $\hat{\alpha}_n^+$ interpret the boundary $n$-arm exponents, but their boundary conditions are different.

It is explained in \cite{SmirnovWernerCriticalExponents} that combining the following three facts would imply the arm exponents for the discrete model: (1) Identification between $\SLE_{\kappa}$ and the limit of the interface in critical lattice model; (2) The arm exponents of $\SLE_{\kappa}$;
(3) Crossing probabilities enjoy (approximate) multiplicativity property.  For critical Ising and FK-Ising model on $\Z^2$ with Dobrushin boundary conditions, the convergence to $\SLE_3$ and $\SLE_{16/3}$ respectively is derived in \cite{ChelkakSmirnovIsing, CDCHKSConvergenceIsingSLE}, and the multiplicativity is derived in \cite{ChelkakDuminilHonglerCrossingprobaFKIsing}. Therefore, we could derive the arm exponents for these two models. See more details in \cite{WuPolychromaticArmFKIsing, WuAlternatingArmIsing}. Moreover, the formula of $\alpha^+_{2n-1}$ in (\ref{eqn::boundary_arm_formula_less8}) was predicted by KPZ in \cite[Equations (11.42), (11.44)]{DuplantierFractalGeometry}.

\smallbreak
\noindent\textbf{Relation to previous results.} The formula of $\alpha_n^+$ and $\alpha_n$ for $\kappa=6$ was obtained in \cite{LawlerSchrammWernerExponent1, SmirnovWernerCriticalExponents}. The exponent $\alpha_1^+$ is related to the Hausdorff dimension of the intersection of $\SLE_{\kappa}$ with the real line which is $1-\alpha_1^+$ when $\kappa>4$. This dimension was obtained in \cite{AlbertsSheffieldDimension}. The most important ingredients in proving Theorem \ref{thm::boundary_arm_greater4} is the Laplace transform of the derivatives of the conformal map in $\SLE$ evolution, which was obtained in \cite{LawlerMinkowskiSLERealLine}.
\smallbreak
\noindent\textbf{Acknowledgment.} The authors acknowledge Hugo Duminil-Copin, Christophe Garban, Gregory Lawler,
Stanislav Smirnov, Vincent Tassion, Brent Werness, and David Wilson for helpful discussions. Hao Wu's work is supported by the NCCR/SwissMAP, the ERC AG COMPASP, the Swiss NSF.
Dapeng Zhan's work is partially supported by NSF DMS-1056840.

\section{Preliminaries}\label{sec::preliminaries}
\noindent\textbf{Notations.} We denote by $f\lesssim g$ if $f/g$ is bounded from above by universal finite constants, by $f\gtrsim g$ if $f/g$ is bounded from below by universal positive constants, and by $f\asymp g$ if $f\lesssim g$ and $f\gtrsim g$.

\noindent For $z\in\C, y\in\R, r>0$.
\begin{align*}
B(z, r)&=\{w\in\C: |w-z|<r\}, \quad \U=B(0,1);
\end{align*}
For two subsets $A, B\subset\C$,
\[\dist(A, B)=\inf\{|x-y|: x\in A, y\in B\}.\]
Let $\Omega$ be an open set and let $V_1, V_2$ be two sets such that $V_1\cap\overline{\Omega}\neq\emptyset$ and $V_2\cap\overline{\Omega}\neq\emptyset$. We denote the extremal distance between $V_1$ and $V_2$ in $\Omega$ by $d_{\Omega}(V_1, V_2)$, see \cite[Section 4]{AhlforsConformalInvariants} for the definition.

\subsection{$\HH$-hull and Loewner chain}
We call a compact subset $K$ of $\overline{\HH}$ an $\HH$-hull if $\HH\setminus K$ is simple connected. Riemann's Mapping Theorem asserts that there exists a unique conformal map $g_K$ from $\HH\setminus K$ onto $\HH$ such that
\[\lim_{|z|\to\infty}|g_K(z)-z|=0.\]
We call such $g_K$ the conformal map from $\HH\setminus K$ onto $\HH$ normalized at $\infty$.
The limit $\hcap(K):=\lim_{|z|\to\infty} z(g_K(z)-z)$ exists and is called the half-plane capacity of $K$.
\begin{lemma}\label{lem::extremallength_argument}
Fix $x>0$ and $\eps>0$.
Let $K$ be an $\HH$-hull and let $g_K$ be the conformal map from $\HH\setminus K$ onto $\HH$ normalized at $\infty$. Assume that
\[x>\max(K\cap\R).\]
Denote by $\gamma$ the connected component of $\HH\cap (\partial B(x,\eps)\setminus K)$ whose closure contains $x+\eps$. Then $g_K(\gamma)$ is contained in the ball with center $g_K(x+\eps)$ and radius $3(g_K(x+3\eps)-g_K(x+\eps))$. Hence $g_K(\gamma)$ is also contained in the ball with center $g_K(x+3\eps)$ and radius $8\eps g_K'(x+3\eps)$.
\end{lemma}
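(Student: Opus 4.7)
Write $a = g_K(x+\eps)$ and $b = g_K(x+3\eps)$. The plan is to prove the first containment $g_K(\gamma) \subset B(a, 3(b-a))$ and then deduce the second containment from it by a triangle inequality combined with a monotonicity estimate for $g_K'$.

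For the first containment, I would begin by passing to the M\"obius-normalized map $F(w) = (g_K(w) - a)/(b-a)$, so that $F \colon \HH \setminus K \to \HH$ is a conformal map with $F(x+\eps) = 0$ and $F(x+3\eps) = 1$, and the claim becomes $F(\gamma) \subset B(0,3)$. Since the free boundary $(x_K, \infty)$ maps into $\R$ under $F$, Schwarz reflection extends $F^{-1}$ to a univalent analytic function on an open disk in $\C$ around the origin. A Koebe-type distortion estimate applied to $F^{-1}$, anchored by the scale $|F^{-1}(1) - F^{-1}(0)| = 2\eps$, then forces $F^{-1}(w) \notin \overline{B(x,\eps)}$ for any $w \in \HH$ with $|w| > 3$, which is equivalent to the claim. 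An alternative route is to use the conformal invariance of extremal length: the extremal distance in $\HH \setminus K$ between $\gamma$ and the ray $[x+3\eps, \infty)$ equals the extremal distance in $\HH$ between $g_K(\gamma)$ and $[b,\infty)$, and the geometric constraint $\gamma \subset \overline{B(x,\eps)}$ combined with a suitable test-metric estimate then yields the bound on $g_K(\gamma)$.

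For the ``hence'' step, the key intermediate I would establish is that $g_K'$ is non-decreasing on $(x_K, \infty)$. This follows from the Pick/Stieltjes representation $g_K^{-1}(z) = z - \int d\nu(s)/(z-s)$ for some positive finite measure $\nu$ supported on a bounded subset of $\R$: differentiating twice gives $(g_K^{-1})''(y) = -2 \int d\nu(s)/(y-s)^3 < 0$ for $y$ to the right of $\operatorname{supp}\nu$, so $(g_K^{-1})'$ is decreasing there, and hence $g_K'(t) = 1/(g_K^{-1})'(g_K(t))$ is increasing on $(x_K, \infty)$. Therefore $g_K(x+3\eps) - g_K(x+\eps) = \int_{x+\eps}^{x+3\eps} g_K'(t)\,dt \leq 2\eps\, g_K'(x+3\eps)$, and for any $z \in \gamma$, the first containment plus the triangle inequality gives $|g_K(z) - g_K(x+3\eps)| \leq |g_K(z) - a| + (b-a) \leq 3(b-a) + (b-a) = 4(b-a) \leq 8\eps\, g_K'(x+3\eps)$, which is the second containment.

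The hard part will be executing the first containment with the sharp constant $3$. Because $K$ is an arbitrary $\HH$-hull, it can approach the free boundary from within $\HH$, and no fixed half-disk around $x+\eps$ or $x+3\eps$ is guaranteed to lie inside $\HH \setminus K$; a naive source-side Koebe argument therefore does not suffice. The correct strategy is to work entirely on the normalized image side, where the desired bound becomes a conformally invariant statement, and to exploit Schwarz reflection of $F^{-1}$ to transfer to a disk domain in which the Koebe-type inequality (or the equivalent extremal-length estimate) yields exactly the factor $3$.
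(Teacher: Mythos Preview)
Your ``hence'' step is correct and complete; the paper states this conclusion but does not prove it, so your argument via the integral representation $f_K(z)=z-\int\frac{d\mu_K(s)}{z-s}$, giving $f_K''<0$ on $(d_K,\infty)$ and hence $g_K'$ increasing on $(b_K,\infty)$, followed by the triangle inequality, is a genuine contribution.

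For the first containment, the extremal-length route you list as an ``alternative'' is precisely what the paper does, and it is the argument that actually delivers the constant $3$. Concretely, with $r^*=\sup_{z\in g_K(\gamma)}|z-a|$, the paper bounds $d_{\HH}(g_K(\gamma),[b,\infty))$ from below by conformal invariance and domain monotonicity,
\[
d_{\HH}(g_K(\gamma),[b,\infty))=d_{\HH\setminus K}(\gamma,[x+3\eps,\infty))\ge d_{\HH\setminus B(x,\eps)}(\partial B(x,\eps),[x+3\eps,\infty))=d_{\HH}([-1,0],[1/3,\infty)),
\]
and from above using that among connected $S\subset\overline{B(a,r^*)}$ with $a\in\overline S$, the extremal distance to $[b,\infty)$ is maximized by the segment $[a-r^*,a]$. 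Comparing the two yields $r^*\le 3(b-a)$. You should promote this to your main argument and record these two ingredients explicitly.

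Your primary Koebe--reflection approach, as stated, has a gap. After Schwarz reflection, $F^{-1}$ is univalent on $\C\setminus[(c_K-a)/(b-a),(d_K-a)/(b-a)]$, but you have no a priori lower bound on $(a-d_K)/(b-a)$: the hull $K$ may come arbitrarily close to $x+\eps$ from within $\HH$, so there is no fixed disk around $0$ on which to invoke Koebe distortion. Moreover, $\gamma$ is a boundary arc centered at $x$, not at $F^{-1}(0)=x+\eps$, and the desired conclusion concerns the \emph{forward} map $F$ on $\gamma$; standard interior distortion bounds for $F^{-1}$ do not translate directly into the needed statement, and it is not clear how to extract the sharp constant $3$ without essentially reproducing the extremal-length comparison above.
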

\begin{proof}
Define $r^*=\sup\{|z-g_K(x+\eps)|: z\in g_K(\gamma)\}$.
It is sufficient to show
\begin{equation}\label{eqn::extremal_aux}
r^*\le 3(g_K(x+3\eps)-g_K(x+\eps)).
\end{equation}
We will prove (\ref{eqn::extremal_aux}) by estimates on the extremal distance:
\[d_{\HH}(g_K(\gamma), [g_K(x+3\eps), \infty)).\]
By the conformal invariance and the comparison principle \cite[Section 4.3]{AhlforsConformalInvariants}, we can obtain the following lower bound.
\begin{align*}
d_{\HH}(g_K(\gamma), [g_K(x+3\eps), \infty))&=d_{\HH\setminus K}(\gamma, [x+3\eps, \infty))\\
&\ge d_{\HH\setminus B(x,\eps)}(B(x,\eps), [x+3\eps, \infty))\\
&=d_{\HH\setminus \U}(\U, [3,\infty))=d_{\HH}([-1,0], [1/3,\infty)).
\end{align*}
On the other hand, we will give an upper bound. Recall a fact for extremal distance: for $x<y$ and $r>0$, the extremal distance in $\HH$ between $[y,\infty)$ and a connected set $S\subset\overline{\HH}$ with $x\in\overline{S}\subset\overline{B(x,r)}$ is maximized when $S=[x-r,x]$, see \cite[Chapter I-E, Chapter III-A]{AhlforsQuasiconformal}. Since $g_K(\gamma)$ is connected and $g_K(x+\eps)\in \R\cap\overline{g_K(\gamma)}$, by the above fact, we have the following upper bound.
\begin{align*}
d_{\HH}(g_K(\gamma), [g_K(x+3\eps), \infty))&\le d_{\HH}([g_K(x+\eps)-r^*, g_K(x+\eps)], [g_K(x+3\eps), \infty))\\
&=d_{\HH}\left([-r^*,0], \left[g_K(x+3\eps)-g_K(x+\eps),\infty\right)\right).
\end{align*}
Combining the lower bound with the upper bound, we have
\[d_{\HH}([-1,0], [1/3,\infty))\le d_{\HH}\left([-r^*,0], \left[g_K(x+3\eps)-g_K(x+\eps),\infty\right)\right).\]
This implies (\ref{eqn::extremal_aux}) and completes the proof.
\end{proof}
\begin{lemma}\label{lem::image_insideball}
Fix $z\in\overline{\HH}$ and $\eps>0$. Let $K$ be an $\HH$-hull and let $g_K$ be the conformal map from $\HH\setminus K$ onto $\HH$ normalized at $\infty$. Assume that
\[\dist(K, z)\ge 16\eps.\]
Then $g_K(B(z,\eps))$ is contained in the ball with center $g_K(z)$ and radius $4\eps |g_K'(z)|$.
\end{lemma}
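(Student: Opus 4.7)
The plan is to reduce the statement to a standard Koebe distortion estimate after extending $g_K$ by Schwarz reflection so that it is univalent on a Euclidean disk of radius $16\eps$ around $z$.

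First I would set up the reflection. Let $K^\ast$ denote the reflection of $K$ across $\R$. A short computation, using that $z$ and every point of $K$ lie in $\overline{\HH}$, shows that the reflected point $\bar w$ of any $w\in K$ satisfies $|z-\bar w|\ge |z-w|$, hence $\dist(z,K^\ast)\ge \dist(z,K)\ge 16\eps$. Consequently $B(z,16\eps)\subset \C\setminus(K\cup K^\ast)$. Since $g_K$ extends across the portion of $\R$ outside $K$ by Schwarz reflection to a conformal map on $\C\setminus(K\cup K^\ast)$, the extended map is univalent on $B(z,16\eps)$.

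With univalence on a disk whose radius is $16$ times the target radius in hand, I would apply the standard Koebe distortion theorem to the normalized map $\zeta\mapsto (g_K(z+16\eps\zeta)-g_K(z))/(16\eps\, g_K'(z))$, which is univalent on $\D$ with value $0$ and derivative $1$ at the origin. The distortion bound at radius $1/16$ gives a constant strictly smaller than $4$, which yields $|g_K(w)-g_K(z)|\le 4\eps|g_K'(z)|$ for every $w\in B(z,\eps)$, as required. The factor $16$ in the hypothesis is generous; the proof would go through with any constant strictly greater than the value dictated by Koebe's bound.

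The only mildly delicate point is interpreting $g_K'(z)$ when $z\in\R$, but this is precisely the derivative of the Schwarz-reflected extension, so once the extension is in place no difficulty arises. I do not expect any substantive obstacle.
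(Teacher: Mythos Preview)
Your argument is correct. After extending $g_K$ by Schwarz reflection across $\R\setminus K$ you indeed obtain a univalent map on $B(z,16\eps)$, and the growth part of the Koebe distortion theorem at radius $1/16$ gives
\[
|g_K(w)-g_K(z)|\le 16\eps\,|g_K'(z)|\cdot\frac{1/16}{(1-1/16)^2}=\frac{256}{225}\,\eps\,|g_K'(z)|<4\eps\,|g_K'(z)|,
\]
so the conclusion follows with room to spare. Your inequality $|z-\bar w|\ge |z-w|$ for $z,w\in\overline{\HH}$ is the right way to handle the reflected hull.

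The paper's route is closely related but organized differently: instead of applying the growth bound once, it applies the Koebe $1/4$ theorem twice. First, Koebe $1/4$ for $g_K$ on $B(z,16\eps)$ (again via the reflected extension) shows that $g_K(B(z,16\eps))\supset B(g_K(z),d)$ with $d=4\eps|g_K'(z)|$; then Koebe $1/4$ for $h=g_K^{-1}$ on $B(g_K(z),d)$ gives $h(B(g_K(z),d))\supset B(z,\eps)$, which is equivalent to $g_K(B(z,\eps))\subset B(g_K(z),d)$. Your single application of the distortion bound is more direct and yields a sharper constant, while the paper's version makes the role of the inverse map explicit. Neither argument needs anything beyond the standard Koebe estimates once the reflection is in place.
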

\begin{proof}
By Koebe 1/4 theorem, we know that
\[\dist(g_K(K), g_K(z))\ge d:=4\eps |g_K'(z)|. \]
Let $h=g_K^{-1}$ restricted to $B(g_K(z), d)$. Applying Koebe 1/4 theorem to $h$, we know that
\[\dist(z, \partial h(B(g_K(z), d)))\ge d|h'(g_K(z))|/4=\eps.\]
Therefore $h(B(g_K(z), d))$ contains the ball $B(z,\eps)$, and this implies that $B(g_K(z), d)$ contains the ball $g_K(B(z,\eps))$ as desired.
\end{proof}

\medbreak
Loewner chain is a collection of $\HH$-hulls $(K_{t}, t\ge 0)$ associated with the family of conformal maps $(g_{t}, t\ge 0)$ obtained by solving the Loewner equation: for each $z\in\mathbb{H}$,
\begin{equation}\label{loewner}
\partial_{t}{g}_{t}(z)=\frac{2}{g_{t}(z)-W_{t}}, \quad g_{0}(z)=z,
\end{equation}
where $(W_t, t\ge 0)$ is a one-dimensional continuous function which we call the driving function. Let $T_z$ be the swallowing time of $z$ defined as $\sup\{t\ge 0: \min_{s\in[0,t]}|g_{s}(z)-W_{s}|>0\}$.
Let $K_{t}:=\overline{\{z\in\mathbb{H}: T_{z}\le t\}}$. Then $g_{t}$ is the unique conformal map from $H_{t}:=\mathbb{H}\backslash K_{t}$ onto $\mathbb{H}$ normalized at $\infty$.

Here we spend some words about the evolution of a point $y\in\R$ under $g_t$. We assume $y\le 0$, the case of $y\ge 0$ can be analyzed similarly. There are two possibilities: if $y$ is not swallowed by $K_t$, then we define $Y_t=g_t(y)$; if $y$ is swallowed by $K_t$, then we define $Y_t$ to the be image of the leftmost of point of $K_t\cap\R$ under $g_t$. The process $Y_t$ is decreasing in $t$, and it is uniquely characterized by the following equation:
\[Y_t=y+\int_0^t \frac{2ds}{Y_s-W_s},\quad Y_t\le W_t,\quad \forall t\ge 0.\]
In this paper, we may write $g_t(y)$ for the process $Y_t$.
Consider two points $x\ge 0\ge  y$ in $\R$. By the above fact, we have
\[g_t(x)=x+\int_0^t\frac{2ds}{g_s(x)-W_s},\quad g_t(y)=y+\int_0^t\frac{2ds}{g_s(y)-W_s}, \quad g_t(y)\le W_t\le g_t(x).\]
Therefore, the quantity $g_t(x)-g_t(y)$ is increasing in $t$. We will use this fact in the paper without reference.

\subsection{SLE processes}
An $\SLE_{\kappa}$ is the random Loewner chain $(K_{t}, t\ge 0)$ driven by $W_t=\sqrt{\kappa}B_t$ where $(B_t, t\ge 0)$ is a standard one-dimensional Brownian motion.
In \cite{RohdeSchrammSLEBasicProperty}, the authors prove that $(K_{t}, t\ge 0)$ is almost surely generated by a continuous transient curve, i.e. there almost surely exists a continuous curve $\eta$ such that for each $t\ge 0$, $H_{t}$ is the unbounded connected component of $\mathbb{H}\backslash\eta[0,t]$ and that $\lim_{t\to\infty}|\eta(t)|=\infty$.

We can define an SLE$_{\kappa}(\rho^{L}; \rho^{R})$ process with two force points $(x^{L}; x^{R})$ where $x^{L}\le 0\le x^{R}$. It is the Loewner chain driven by $W_{t}$ which is the solution to the following systems of SDEs:
\[dW_{t}=\sqrt{\kappa}dB_{t}+\frac{\rho^{L} dt}{W_{t}-V_{t}^{L}}+\frac{\rho^{R} dt}{W_{t}-V_{t}^{R}}, \quad W_{0}=0;\]
\[dV^{L}_{t}=\frac{2dt}{V^{L}_{t}-W_{t}}, \quad V^{L}_{0}=x^{L}; \quad dV^{R}_{t}=\frac{2dt}{V^{R}_{t}-W_{t}}, \quad V^{R}_{0}=x^{R}.\]

The solution exists up to the first time that $W$ hits $V^L$ or $V^R$.
When $\rho^L>-2$ and $\rho^R>-2$, the solution exists for all times $t\ge 0$, and the corresponding Loewner chain is almost surely generated by a continuous curve which is almost surely transient (\cite[Section 2]{MillerSheffieldIG1}). There are two special values of $\rho$: $\kappa/2-2$ and $\kappa/2-4$. When $\rho^R\ge \kappa/2-2$, then the curve will never hits $[x^R,\infty)$. When $\rho^R\le \kappa/2-4$, then the curve will almost surely accumulates at $x^R$ at finite time. See \cite[Lemma 15]{DubedatSLEDuality}.

From Girsanov Theorem, it follows that the law of an $\SLE_{\kappa}(\rho^L;\rho^R)$ process can be constructed by reweighting the law of an ordinary $\SLE_{\kappa}$.
\begin{lemma}\label{lem::sle_mart}
Suppose $x^L<0<x^R$, define
\begin{align*}
M_t=&g_t'(x^L)^{\rho^L(\rho^L+4-\kappa)/(4\kappa)}(W_t-g_t(x^L))^{\rho^L/\kappa}\times g_t'(x^R)^{\rho^R(\rho^R+4-\kappa)/(4\kappa)}(g_t(x^R)-W_t)^{\rho^R/\kappa}\\
&\times (g_t(x^R)-g_t(x^L))^{\rho^L\rho^R/(2\kappa)}.
\end{align*}
Then $M$ is a local martingale for $\SLE_{\kappa}$ and the law of $\SLE_{\kappa}$ weighted by $M$ (up to the first time that $W$ hits one of the force points) is equal to the law of $\SLE_{\kappa}(\rho^L;\rho^R)$ with force points $(x^L; x^R)$.
\end{lemma}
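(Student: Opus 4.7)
The plan is a direct Itô computation verifying the local martingale property, followed by Girsanov's theorem to identify the reweighted law. Write $O^L_t := W_t - g_t(x^L)$ and $O^R_t := g_t(x^R) - W_t$, both strictly positive until the corresponding force point is swallowed. From the Loewner equation \eqref{loewner} with $W_t = \sqrt{\kappa}\,B_t$, one reads off
\begin{align*}
dO^L_t &= \sqrt{\kappa}\,dB_t + \frac{2\,dt}{O^L_t}, & d\log g_t'(x^L) &= -\frac{2\,dt}{(O^L_t)^2},\\
dO^R_t &= -\sqrt{\kappa}\,dB_t + \frac{2\,dt}{O^R_t}, & d\log g_t'(x^R) &= -\frac{2\,dt}{(O^R_t)^2},
\end{align*}
together with the deterministic identity $d(O^L_t + O^R_t) = (2/O^L_t + 2/O^R_t)\,dt$ obtained by differentiating $g_t(x^R) - g_t(x^L)$.

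With exponents $a^\bullet := \rho^\bullet(\rho^\bullet + 4 - \kappa)/(4\kappa)$, $b^\bullet := \rho^\bullet/\kappa$, and $c := \rho^L \rho^R/(2\kappa)$, one has
$$\log M_t = a^L \log g_t'(x^L) + b^L \log O^L_t + a^R \log g_t'(x^R) + b^R \log O^R_t + c \log(O^L_t + O^R_t).$$
Itô's formula combined with $dM_t/M_t = d\log M_t + \tfrac{1}{2}\,d\langle \log M\rangle_t$ decomposes $dM_t/M_t$ into a Brownian part $\frac{1}{\sqrt{\kappa}}\bigl(\rho^L/O^L_t - \rho^R/O^R_t\bigr)\,dB_t$ and a drift whose coefficients of $(O^L_t)^{-2}$, $(O^R_t)^{-2}$, and $(O^L_t O^R_t)^{-1}$ are, respectively,
$$-2a^\bullet + b^\bullet\bigl(2 - \tfrac{\kappa}{2}\bigr) + \tfrac{\kappa}{2}(b^\bullet)^2 \quad (\bullet \in \{L,R\}), \qquad 2c - \kappa\,b^L b^R.$$
A short algebraic verification shows that each of these three expressions vanishes with the prescribed choice of $a^\bullet$ and $c$. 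Hence $M$ is a (strictly positive) local martingale up to the first time $W$ meets one of $g_t(x^L)$ or $g_t(x^R)$.

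For the second assertion, since $M$ is a positive local martingale driven by $B$, Girsanov's theorem shows that under the measure obtained by weighting by $M_t/M_0$, the process
$$\tilde B_t := B_t - \int_0^t \frac{1}{\sqrt{\kappa}}\left(\frac{\rho^L}{O^L_s} - \frac{\rho^R}{O^R_s}\right) ds$$
is a standard Brownian motion. Translating back via $W_s - g_s(x^L) = O^L_s$ and $W_s - g_s(x^R) = -O^R_s$ then gives exactly the system of SDEs defining $\SLE_\kappa(\rho^L;\rho^R)$ with force points $(x^L;x^R)$. The main obstacle is the algebraic bookkeeping leading to the cross-term identity $2c = \kappa\,b^L b^R$: the coupling exponent $c = \rho^L\rho^R/(2\kappa)$ on the factor $(g_t(x^R) - g_t(x^L))$ is uniquely dictated by the need to cancel the Itô cross-variation between the Brownian contributions of $\log O^L_t$ and $\log O^R_t$, while the square-term identities force the exponent $a^\bullet$ on $g_t'(x^\bullet)$.
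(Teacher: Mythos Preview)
Your computation is correct: the It\^o drifts vanish exactly as you claim, and the Girsanov step recovers the $\SLE_\kappa(\rho^L;\rho^R)$ driving SDE. The paper itself does not carry out this calculation but simply cites \cite[Theorem~6]{SchrammWilsonSLECoordinatechanges}; your argument is precisely the standard direct verification underlying that reference, so the approaches are essentially the same, with yours supplying the details the paper outsources.
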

\begin{proof}
\cite[Theorem 6]{SchrammWilsonSLECoordinatechanges}.
\end{proof}

\begin{lemma}\label{lem::stochasticbound1}
Fix $\kappa>0$ and $\nu\le\kappa/2-4$. Suppose $y\le 0<x$. Let $\eta$ be an $\SLE_{\kappa}(\nu)$ in $\HH$ from 0 to $\infty$ with force point $x$. Since $\nu\le\kappa/2-4$, the curve $\eta$ accumulates at the point $x$ at almost surely finite time which is denoted by $T$. Then we have, for $\lambda\le 0$,
\[\E\left[\left(g_T(x)-g_T(y)\right)^{\lambda}\right]\asymp (x-y)^{\lambda}, \]
where the constants in $\asymp$ depend only $\kappa, \nu$ and $\lambda$.
\end{lemma}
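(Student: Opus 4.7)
The upper bound is immediate: the paragraph just before the lemma establishes that $t\mapsto g_t(x)-g_t(y)$ is monotone non-decreasing, so $g_T(x)-g_T(y) \geq x-y$ pathwise. Since $\lambda\leq 0$, raising to the $\lambda$-th power reverses the inequality, giving $\E[(g_T(x)-g_T(y))^\lambda] \leq (x-y)^\lambda$.

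For the lower bound my plan is to construct an explicit local martingale of the form $N_t = Z_t^\lambda G(r_t)$, where $Z_t := g_t(x)-g_t(y)$ and $r_t := (g_t(x)-W_t)/Z_t \in (0,1]$, and apply optional stopping at $T$. Using that $Z_t$ is of pure drift (from the formulas already derived before the lemma) and that $d\langle r\rangle_t = (\kappa/Z_t^2)\,dt$, a direct It\^{o} computation shows that $N_t$ is a local martingale precisely when $G$ solves the hypergeometric ODE
$$\frac{\kappa}{2}\,r(1-r)\,G''(r) + \bigl[(2+\nu)-(4+\nu)r\bigr]G'(r) + 2\lambda G(r) = 0.$$
I take $G$ to be the analytic solution with $G(0)=1$, namely ${}_2F_1(a,b;c;\cdot)$ with $c = 2(2+\nu)/\kappa$, $a+b = (8+2\nu-\kappa)/\kappa$, $ab = -4\lambda/\kappa$.

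Because $Z_t^\lambda \leq (x-y)^\lambda$ (again by the monotonicity above) and $G$ is continuous on $[0,1]$, $N_t$ is a bounded martingale. Noting $r_T = 0$ and $Z_T = g_T(x)-g_T(y)$, optional stopping yields
$$\E\bigl[(g_T(x)-g_T(y))^\lambda\bigr] = (x-y)^\lambda\, G\bigl(x/(x-y)\bigr),$$
so the lower bound reduces to pinching $G$ between two positive constants on $[0,1]$.

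The main obstacle is this last verification. My plan: from the power-series recursion for $G$, all Taylor coefficients come out non-negative whenever $\lambda\leq 0$ and $\nu > -2$, so $G\geq 1$ on $[0,1)$; and the hypothesis $\nu \leq \kappa/2-4$ (together with $\nu>-2$, which forces $\kappa>4$) gives $c-a-b = (\kappa-4)/\kappa > 0$, so Gauss's summation makes $G(1)$ finite. This pinches $G(r)$ between $1$ and $G(1)$ on $[0,1]$ with constants depending only on $\kappa,\nu,\lambda$, completing the proof. A more robust but less explicit alternative would be to show, by direct SDE estimates on $\rho_t = g_t(x)-W_t$ (a Bessel process of dimension $1+(4+2\nu)/\kappa<2$), that with probability bounded below the hull $K_T$ stays inside a fixed disk, whence the conformal-distortion estimates of Lemmas \ref{lem::extremallength_argument} and \ref{lem::image_insideball} give $g_T(x)-g_T(y)\lesssim x-y$.
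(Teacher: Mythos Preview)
Your upper bound is fine and matches the paper. The hypergeometric route to the lower bound breaks down, and your own formulas expose it: if optional stopping really gave $\E[(g_T(x)-g_T(y))^\lambda]=(x-y)^\lambda\, G\bigl(x/(x-y)\bigr)$ with your $G={}_2F_1(a,b;c;\cdot)$, then combining with the (correct) upper bound $\E[(g_T(x)-g_T(y))^\lambda]\le (x-y)^\lambda$ forces $G\le 1$ on $(0,1]$, while your coefficient-positivity argument gives $G>1$ there whenever $\lambda<0$. The inconsistency enters at the boundary $r=1$. For $\kappa>4$ the diffusion $r_t=(g_t(x)-W_t)/Z_t$ reaches $1$ (whenever the curve touches $(-\infty,y]$, and already at $t=0$ if $y=0$) and reflects; your $G$ is continuous at $1$ but not $C^1$ there (since $c-a-b=(\kappa-4)/\kappa\in(0,1)$, Gauss gives $G(1)<\infty$ but $G'(1^-)=+\infty$), so It\^o's formula does not carry through the boundary visits and $N_t$ fails to be a local martingale on all of $[0,T]$. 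The solution of the ODE that would make $N_t$ a genuine martingale is the one satisfying the reflecting boundary condition at $r=1$, and for that solution neither the Taylor-coefficient positivity nor the Gauss summation is available. Separately, the side assumption $\nu>-2$ is not in the lemma and is violated in the paper's only application (Lemma~\ref{lem::greater4_even_odd}), where $\nu=\kappa(\alpha^+_{2n}-\alpha^+_{2n+1})\le -4$ for all $n\ge 1$ and $\kappa>4$.

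The paper's proof is the soft argument you sketch at the end, made precise by a coupling rather than a hull-diameter estimate: after scaling to $x-y=1$, run an auxiliary $\SLE_\kappa(\nu)$ with force point $1$ until the ratio $(\tilde W_t-\tilde V_t)/(\tilde g_t(1)-\tilde V_t)$ first equals $-y$; the conformally renormalized remainder has exactly the law of the original process, which shows that $g_T(x)-g_T(y)$ is stochastically dominated by $\tilde g_{\tilde T}(1)-\tilde V_{\tilde T}$, a fixed random variable depending only on $(\kappa,\nu)$. The lower bound then follows from the single-line observation that $\PP[\tilde g_{\tilde T}(1)-\tilde V_{\tilde T}\le 4]>0$.
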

\begin{proof}
Since the quantity $g_t(x)-g_t(y)$ is increasing in $t$, we have $g_{T}(x)-g_{T}(y)\ge (x-y)$. This implies the upper bound. We only need to show the lower bound. To this end, we will compare $\eta$ with $\SLE_{\kappa}(\nu)$ with force point $x-y$ and show that the law of $(g_T(x)-g_T(y))/(x-y)$ is stochastically dominated by a random variable whose law depends only $\kappa, \nu$. By the scaling invariance of $\SLE_{\kappa}(\nu)$, we may assume $x-y=1$.

Let $\tilde{\eta}$ be an $\SLE_{\kappa}(\nu)$ with force point $1$, and define $\tilde{W}, \tilde{g}_t, \tilde{T}$ accordingly. Define $\tilde{V}_t$ to be the image of the leftmost point of $\tilde{\eta}[0,t]\cap\R$ under $\tilde{g}_t$. Set
\[\tilde{J}_t=\frac{\tilde{W}_t-\tilde{V}_t}{\tilde{g}_t(1)-\tilde{V}_t}.\]
Define the stopping time $\tau=\inf\{t: \tilde{J}_t=-y\}$.
Note that $\tilde{J}_0=0, \tilde{J}_{\tilde{T}}=1$ and $\tilde{J}$ is continuous, we have that $0\le \tau\le \tilde{T}$. Given $\tilde{\eta}[0,\tau]$, the process $(\tilde{\eta}(t+\tau), 0\le t\le \tilde{T}-\tau)$,
under the map
\[f(z)=\frac{\tilde{g}_{\tau}(z)-\tilde{W}_{\tau}}{\tilde{g}_{\tau}(1)-\tilde{V}_{\tau}},\]
has the same law as $(\eta(t), 0\le t\le T)$ after a linear time-change. Therefore, given $\tilde{\eta}[0,\tau]$, we have
\[\frac{\tilde{g}_{\tilde{T}}(1)-\tilde{V}_{\tilde{T}}}{\tilde{g}_{\tau}(1)-\tilde{V}_{\tau}}\overset{d}{=}g_T(x)-g_T(y).\]
Since $\tilde{g}_{\tau}(1)-\tilde{V}_{\tau}\ge 1$, we may conclude that
the quantity $(g_T(x)-g_T(y))$ is stochastically dominated from above by $(\tilde{g}_{\tilde{T}}(1)-\tilde{V}_{\tilde{T}})$.
To complete the proof, it is sufficient to show
\begin{equation}\label{eqn::stochasticbound1_aux}
\tilde{\E}\left[\left(\tilde{g}_{\tilde{T}}(1)-\tilde{V}_{\tilde{T}}\right)^{\lambda}\right]\gtrsim 1,
\end{equation}
where $\tilde{\PP}$ denotes the law of $\SLE_{\kappa}(\nu)$ with force point $1$. Define the event
\[\tilde{F}=\{\tilde{g}_{\tilde{T}}(1)-\tilde{V}_{\tilde{T}}\le 4\}.\]
It is clear that $\tilde{\PP}[\tilde{F}]$ is strictly positive and depends only on $\kappa$ and $\nu$, thus
\[\tilde{\E}\left[\left(\tilde{g}_{\tilde{T}}(1)-\tilde{V}_{\tilde{T}}\right)^{\lambda}\right]\ge 4^{\lambda}\tilde{\PP}[\tilde{F}].\]
This implies (\ref{eqn::stochasticbound1_aux}) and completes the proof.
\end{proof}

\begin{lemma}\label{lem::stochasticbound2}
Fix $\kappa>4$ and $\nu\ge\kappa/2-2$. Suppose $y<0<x$, let $\eta$ be an $\SLE_{\kappa}(\nu)$ with force point $x$. For $c>0$ small, define
\[\sigma=\inf\{t: \eta(t)\in (-\infty, y]\},\quad F=\{\dist(\eta[0,\sigma], x)\ge cx\}.\] Then there exists a constant $c\in (0,1)$ depending only on $\kappa$ and $\nu$ such that, for $\lambda\le 0$,
\[\E\left[\left(g_{\sigma}(x)-g_{\sigma}(y)\right)^{\lambda}1_{F}\right]\asymp (x-y)^{\lambda},\]
where the constants in $\asymp$ depend only on $\kappa, \nu$ and $\lambda$.
\end{lemma}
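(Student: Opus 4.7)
\medbreak
\noindent\textbf{Plan.} The upper bound is immediate. By the monotonicity of $g_t(x)-g_t(y)$ in $t$ noted just before Section~2.2, $g_\sigma(x)-g_\sigma(y)\ge x-y$, so with $\lambda\le 0$ one has $(g_\sigma(x)-g_\sigma(y))^\lambda\le (x-y)^\lambda$; multiplying by $1_F\le 1$ and taking expectation gives the upper bound. For the lower bound I would use the scale invariance of $\SLE_\kappa(\nu)$ to normalize $x=1$, reducing the claim to
\[
\E\!\left[(g_\sigma(1)-g_\sigma(y))^\lambda 1_F\right]\gtrsim (1-y)^\lambda, \qquad y<0,
\]
with constants depending only on $\kappa,\nu,\lambda$. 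The strategy is to produce a good event $G$, measurable with respect to $\eta[0,\sigma]$, such that (i) $G\subseteq F$ for a suitable choice of $c\in(0,1)$, (ii) $\PP[G]\ge p_0>0$ with $p_0=p_0(\kappa,\nu)$, and (iii) on $G$ one has $g_\sigma(1)-g_\sigma(y)\le C(1-y)$ with $C=C(\kappa,\nu)$. The desired bound then follows at once from $\E[(g_\sigma(1)-g_\sigma(y))^\lambda 1_F]\ge C^\lambda(1-y)^\lambda\,\PP[G]$, since $\lambda\le 0$.

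I would define $G$ to be the event that $\eta$ first exits the disk $R_y:=B(y/2,\,(3-2y)/4)$ through the boundary arc $[y-3/4,\,y]\subset(-\infty,y]$. By construction $R_y$ contains $0$ and $y$, has diameter $\asymp 1-y$, and sits at distance $3/4$ from the point $1$, so $G\subseteq F$ with $c=3/4$ is automatic. On $G$ the hull $K_\sigma$ lies inside $R_y$, hence inside a disk of radius $\asymp 1-y$ that is disjoint from a fixed neighborhood of $1$. Combining Lemma~\ref{lem::extremallength_argument} and Lemma~\ref{lem::image_insideball} with the elementary estimate $\hcap(K_\sigma)\lesssim (1-y)^2$ coming from the Laurent expansion of $g_\sigma$, one obtains $g_\sigma(1)\le 1+C(1-y)$ and $g_\sigma(y)\ge y-C(1-y)$; subtracting yields $g_\sigma(1)-g_\sigma(y)\le C'(1-y)$ as required.

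The real work is the uniform lower bound $\PP[G]\ge p_0$ in $y<0$. For $|y|=O(1)$ it is routine from continuity of $\eta$ together with the positive density of the $\SLE_\kappa(\nu)$ hitting measure on any open sub-interval of $(-\infty,0)$ (valid since $\kappa>4$). The delicate regime is $|y|$ large: after rescaling by $|y|$, one faces a uniform exit estimate for $\SLE_\kappa(\nu)$ with force point at $1/|y|\to 0^+$. The hypothesis $\nu\ge\kappa/2-2$ makes the force point repelling and puts a systematic leftward drift on the Bessel-type driver $g_t(1)-W_t$, so the rescaled curves converge in law to $\SLE_\kappa(\nu)$ with degenerate force point at $0^+$, which exits a fixed target interval in $(-\infty,-1]$ with positive probability. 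Making this uniform in $|y|$ is the main obstacle; I would handle it either by a direct Bessel--Brownian comparison on the driver (using the explicit SDE from Section~2.2), or by applying Lemma~\ref{lem::sle_mart} with $\rho^L=\kappa/2-4$ at the point $y$ to reweight the measure to $\SLE_\kappa(\kappa/2-4;\nu)$ with force points $(y,1)$, which is attracted to $y$ and swallows it at an almost surely finite time, and then invoking Lemma~\ref{lem::stochasticbound1}-type arguments after controlling the Radon--Nikodym density on $F$ by the Koebe distortion estimates already established.
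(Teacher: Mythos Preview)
Your upper bound is correct and coincides with the paper's. The issue is entirely in the lower bound, where your route diverges from the paper's and leaves a real gap.

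The paper does \emph{not} try to exhibit a single geometric event $G$ with uniformly positive probability. Instead it normalizes $x-y=1$ (not $x=1$) and first proves the inequality \emph{without} the indicator $1_F$, namely
\[
\E\!\left[(g_\sigma(x)-g_\sigma(y))^{\lambda}\right]\gtrsim 1,
\]
via a coupling: one runs an auxiliary $\SLE_\kappa(\nu)$ process $\tilde\eta$ with force point $0^+$, waits until the ratio $\tilde J_t=(\tilde V_t-\tilde W_t)/(\tilde V_t-\tilde g_t(-1))$ first equals $x$, and observes that the conditional future has the law of $\eta$ after an affine change. This yields stochastic domination of $g_\sigma(x)-g_\sigma(y)$ by the fixed random variable $\tilde V_{\tilde\sigma}-\tilde g_{\tilde\sigma}(-1)$, whose $\lambda$-moment is a constant depending only on $\kappa,\nu$. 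Only \emph{afterwards} is $1_F$ inserted: by scaling, $\PP[\dist(\eta,x)<cx]=p(c)$ depends only on $c$, and $p(c)\to 0$ since $\nu\ge \kappa/2-2$. Subtracting $p(c)$ from the moment bound and choosing $c$ small finishes the proof. The point is that the moment bound and the avoidance event are completely decoupled.

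Your strategy, by contrast, hinges on $\PP[G]\ge p_0$ uniformly in $y<0$, and this is where the argument breaks. With your choice $R_y=B(y/2,(3-2y)/4)$ and $x=1$ fixed, rescale by $|y|$: the process becomes $\SLE_\kappa(\nu)$ with force point $1/|y|\to 0^+$, the stopping time becomes the first hit of $(-\infty,-1]$, and the disk $R_y/|y|$ converges to $B(-1/2,1/2)$, which is \emph{tangent} to the starting point $0$. Your event $G$ then asks the curve to enter and remain in a half-disk whose boundary circle passes through the starting point with a vertical tangent; equivalently, the exit arc $[y-3/4,y]$ rescales to an interval of length $3/(4|y|)\to 0$. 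There is no reason for this probability to stay bounded away from zero, and your sketches (``Bessel--Brownian comparison'' and reweighting to $\SLE_\kappa(\kappa/2-4;\nu)$ while controlling the Radon--Nikodym derivative via Koebe) do not supply one. Even the minor numerics are off: the right edge of $R_y$ is $3/4$, so $\dist(R_y,1)=1/4$, not $3/4$.

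If you want to salvage a geometric-event approach, you must pick $G$ whose rescaled limit is non-degenerate (for instance, a region with a fixed margin around $0$ after rescaling). But the paper's coupling argument sidesteps all of this and is both shorter and more robust; I would recommend you adopt it.
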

\begin{proof}
Since the quantity $g_t(x)-g_t(y)$ is increasing in $t$, we have $g_{\sigma}(x)-g_{\sigma}(y)\ge (x-y)$. This implies the upper bound. We only need to show the lower bound. We may assume that $x-y=1$.
We first argue that
\begin{equation}\label{eqn::stochasticbound2_aux}
\E\left[\left(g_{\sigma}(x)-g_{\sigma}(y)\right)^{\lambda}\right]\asymp (x-y)^{\lambda}.
\end{equation}
The proof of (\ref{eqn::stochasticbound2_aux}) is similar to the proof of Lemma \ref{lem::stochasticbound1}. Let $\tilde{\eta}$ be an $\SLE_{\kappa}(\nu)$ with force point $0^+$. Define $\tilde{W}, \tilde{g}$ accordingly and let $\tilde{\sigma}$ be the first time that $\tilde{\eta}$ hits $(-\infty, -1)$. Let $\tilde{V}_t$ be the evolution of the force point. Define
\[\tilde{J}_t=\frac{\tilde{V}_t-\tilde{W}_t}{\tilde{V}_t-\tilde{g}_t(-1)},\quad \tau:=\inf\{t: \tilde{J}_t=x\}.\]
Given $\tilde{\eta}[0,\tau]$, the process $(\tilde{\eta}(t+\tau), 0\le t\le \tilde{\sigma}-\tilde{\tau})$ under the map
\[f(z)=\frac{\tilde{g}_{\tau}(z)-\tilde{W}_{\tau}}{\tilde{V}_\tau-\tilde{g}_{\tau}(-1)}\]
has the same law as $(\eta(t), 0\le t\le \sigma)$ after a linear time change. In particular,
\[\frac{\tilde{V}_{\tilde{\sigma}}-\tilde{g}_{\tilde{\sigma}}(-1)}{\tilde{V}_\tau-\tilde{g}_{\tau}(-1)}\overset{d}{=}g_{\sigma}(x)-g_{\sigma}(y).\]
Since $\tilde{V}_\tau-\tilde{g}_{\tau}(-1)\ge 1$, we know that $(g_{\sigma}(x)-g_{\sigma}(y))$ is stochastically dominated from above by $(\tilde{V}_{\tilde{\sigma}}-\tilde{g}_{\tilde{\sigma}}(-1))$, thus
\[\E\left[\left(g_{\sigma}(x)-g_{\sigma}(y)\right)^{\lambda}\right]\ge \tilde{\E}\left[\left(\tilde{V}_{\tilde{\sigma}}-\tilde{g}_{\tilde{\sigma}}(-1)\right)^{\lambda}\right]\asymp 1.\]
This implies (\ref{eqn::stochasticbound2_aux}). Next, we prove the conclusion. By the scaling invariance of $\SLE_{\kappa}(\nu)$ process we know that the probability $\PP[\dist(\eta, x)< cx]$ only depends on $c$. We denote this probability by $p(c)$. Since $\nu\ge \kappa/2-2$, we know that $p(c)\to 0$ as $c\to 0$. Therefore, by (\ref{eqn::stochasticbound2_aux}), we have
\[1\asymp \E\left[\left(g_{\sigma}(x)-g_{\sigma}(y)\right)^{\lambda}\right]\le \E\left[\left(g_{\sigma}(x)-g_{\sigma}(y)\right)^{\lambda}1_{F}\right]+p(c).\]
This implies the conclusion.
\end{proof}

\section{Boundary Arm Exponents for $\kappa>4$}\label{sec::boundary_greater4}
\subsection{Estimate on the derivative}
\begin{proposition}\label{prop::sle_boundary_estimate}
Fix $\kappa>0$ and let $\eta$ be an $\SLE_{\kappa}$ in $\HH$ from 0 to $\infty$. Let $O_t$ be the image of the rightmost point of $K_t\cap \R$ under $g_t$. Set $\Upsilon_t=(g_1(1)-O_t)/g_t'(1)$.
For $\eps\in (0,1)$, define
\[\hat{\tau}_{\eps}=\inf\{t: \Upsilon_t=\eps\},\quad T_0=\inf\{t: \eta(t)\in [1, \infty)\}.\]
For $\lambda\ge 0$, define
\[u_1(\lambda)=\frac{1}{\kappa}(4-\kappa/2)+\frac{1}{\kappa}\sqrt{4\kappa\lambda+(4-\kappa/2)^2}.\]
For $b\in\R$, assume that
\begin{equation}\label{eqn::requirement_b}
\kappa\lambda-\kappa u_1(\lambda)+8-2\kappa<\kappa b\le \kappa\lambda+\kappa u_1(\lambda).
\end{equation}
Then we have
\begin{equation}\label{eqn::estimate_derivative}
\E\left[(g_{\hat{\tau}_{\eps}}(1)-W_{\hat{\tau}_{\eps}})^{\lambda-b}g_{\hat{\tau}_{\eps}}'(1)^{b} 1_{\{\hat{\tau}_{\eps}<T_0\}}\right]\asymp\eps^{u_1(\lambda)+\lambda-b},
\end{equation}
where the constants in $\asymp$ depend only on $\kappa$ and $\lambda, b$.
\end{proposition}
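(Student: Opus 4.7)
The strategy is to build a positive local martingale $M_t$ for $\SLE_\kappa$ whose optional stopping at $\hat\tau_\eps \wedge T_0$ produces the two-sided bound (\ref{eqn::estimate_derivative}) directly; this is an instance of the Laplace-transform technique for SLE derivatives pioneered by Lawler. Write $X_t = g_t(1) - W_t$, $X'_t = g_t'(1)$, and introduce the scale-invariant shape variable $R_t = (g_t(1)-O_t)/X_t \in [0,1]$, so that $\Upsilon_t = R_t X_t / X'_t$. Itô's formula gives $dX_t = (2/X_t)\,dt - \sqrt{\kappa}\,dB_t$ and $d\log X'_t = -2/X_t^2\,dt$, while the Loewner equation gives $dO_t = 2/(O_t - W_t)\,dt$ away from the countable jumps of $O_t$ (which occur when a further boundary interval is swallowed). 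From these one computes a Markovian SDE for $R_t$ on $(0,1)$ whose drift and diffusion both scale like $1/X_t^2$.

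I seek a local martingale of the form $M_t = X_t^p (X'_t)^q F(R_t)$. The vanishing-drift condition reduces, after cancellation of the common $1/X_t^2$ factor, to a second-order hypergeometric ODE for $F$ on $(0,1)$. Its indicial exponents at $R = 0$ are exactly the two roots of $\kappa u^2 + (\kappa - 8)u - 4\lambda = 0$, of which $u_1(\lambda)$ is the positive one. I then take the unique (up to scalar) positive solution with leading behavior $F(R) \sim R^{u_1(\lambda)}$ as $R \to 0^+$, and tune $(p,q)$ through the martingale compatibility condition so that, on $\{\hat\tau_\eps < T_0\}$ — where $R_{\hat\tau_\eps} = \eps X'_{\hat\tau_\eps}/X_{\hat\tau_\eps}$ and the leading behavior of $F$ applies — the optional-stopping identity $M_0 = \E[M_{\hat\tau_\eps \wedge T_0}]$ translates into the estimate (\ref{eqn::estimate_derivative}), up to multiplicative constants depending only on $\kappa,\lambda,b$.

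The condition (\ref{eqn::requirement_b}) is exactly the sharp range in which this argument closes. The upper bound $\kappa b \le \kappa \lambda + \kappa u_1(\lambda)$ pins down the correct hypergeometric branch making $F$ finite and positive at $R = 1$, so that $M_0 = F(R_0)$ is a nonzero finite constant. The strict lower bound $\kappa \lambda - \kappa u_1(\lambda) + 8 - 2\kappa < \kappa b$ suppresses the subdominant indicial exponent at $R = 0$ and, via Lemmas \ref{lem::stochasticbound1} and \ref{lem::stochasticbound2}, renders the contribution of the bad event $\{T_0 \le \hat\tau_\eps\}$ negligible relative to the main term. Optional stopping with a standard localization argument (to bypass a priori integrability issues) then yields both the upper and the lower bound in (\ref{eqn::estimate_derivative}).

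The main technical obstacles are twofold: (i) solving the hypergeometric ODE explicitly enough to verify that (\ref{eqn::requirement_b}) is exactly what makes both boundary analyses work and to identify the correct constants; and (ii) handling the countable jumps of $O_t$ so that $M_t$ remains a genuine local martingale through the jumps — this is where the $R^{u_1(\lambda)}$ boundary behavior of $F$ at $R = 0$ turns out to be compatible with the instantaneous downward jumps of $R_t$. Both are in the spirit of Lawler's techniques but require careful estimates, and the jump analysis in particular is the main delicate point given that $\kappa > 4$ forces $O_t$ to be genuinely discontinuous.
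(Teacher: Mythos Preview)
Your framework is right in spirit but you have overcomplicated the argument and misidentified where the real work lies. The paper's local martingale is simply the Schramm--Wilson Girsanov weight
\[
M_t=g_t'(1)^{\nu(\nu+4-\kappa)/(4\kappa)}(g_t(1)-W_t)^{\nu/\kappa},\qquad \nu=-\kappa u_1(\lambda),
\]
which involves neither $O_t$ nor any hypergeometric factor $F(R_t)$. One then rewrites it algebraically as $M_t=(g_t(1)-W_t)^{\lambda-b}g_t'(1)^b\,\Upsilon_t^{-\beta}J_t^{\beta}$ with $\beta=u_1(\lambda)+\lambda-b$; since $\Upsilon_{\hat\tau_\eps}=\eps$ exactly, Girsanov gives
\[
\E\big[(g_{\hat\tau_\eps}(1)-W_{\hat\tau_\eps})^{\lambda-b}g_{\hat\tau_\eps}'(1)^b\,1_{\{\hat\tau_\eps<T_0\}}\big]=\eps^{\beta}\,\E^*\big[J_{\hat\tau_\eps}^{-\beta}1_{\{\hat\tau_\eps<T_0\}}\big],
\]
where $\E^*$ is expectation for $\SLE_\kappa(\nu)$ with force point $1$. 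Because $M_t$ does not contain $O_t$, your ``main delicate point'' about jumps evaporates: there is nothing to check.

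The genuine technical input is not Lemmas~\ref{lem::stochasticbound1}--\ref{lem::stochasticbound2} (which are used elsewhere, in the induction steps) but Lemma~\ref{lem::sle_rho_integrable}: under $\PP^*$ and the time change $\sigma(s)=\hat\tau_{e^{-2s}}$, the process $\hat J_s$ is a diffusion on $(0,1)$ with invariant density proportional to $y^{1-(8+2\nu)/\kappa}(1-y)^{4/\kappa-1}$, and a coupling with the stationary process yields $\E^*[J_{\hat\tau_\eps}^{-\beta}]\asymp 1$. The condition~(\ref{eqn::requirement_b}) is then read off directly: the upper bound on $b$ says $\beta\ge 0$ (so $J^{-\beta}\ge 1$ gives the trivial lower bound), while the strict lower bound on $b$ is exactly the integrability condition $8+2\nu+\kappa\beta<2\kappa$ for the invariant density. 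Your interpretation of~(\ref{eqn::requirement_b}) in terms of hypergeometric boundary behavior and suppression of a bad event $\{T_0\le\hat\tau_\eps\}$ is therefore off the mark.
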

Attention that, in Proposition \ref{prop::sle_boundary_estimate}, we use the stopping time $\hat{\tau}_{\eps}$ instead of $\tau_{\eps}$ which is defined to be the first time that $\eta$ hits $B(1,\eps)$. Due to Koebe 1/4 thoerem, these two times are very close: 
\[\tau_{4\eps}\le \hat{\tau}_{\eps}\le\tau_{\eps/4}.\]
Due to technical reason, we only prove the conclusion in Proposition \ref{prop::sle_boundary_estimate} for the time $\hat{\tau}_{\eps}$, but this is sufficient for our purpose later in the paper. 
\begin{lemma}\label{lem::sle_rho_integrable}
Fix $\kappa >0$ and $\nu\le \kappa/2-4$. Let $\eta$ be an $\SLE_{\kappa}(\nu)$ in $\HH$ from 0 to $\infty$ with force point $1$. Denote by $W$ the driving function, $V$ the evolution of the force point.  Let $O_t$ be the image of the rightmost point of $K_t\cap \R$ under $g_t$. Set $\Upsilon_t=(g_t(1)-O_t)/g_t'(1)$ and $\sigma(s)=\inf\{t: \Upsilon_t=e^{-2s}\}$.
Set $J_t=(V_t-O_t)/(V_t-W_t)$. Let $T_0=\inf\{t: \eta(t)\in [1,\infty)\}$. We have, for $\beta> 0$,
\begin{equation}\label{eqn::integrable_J_sigma}
\E\left[J_{\sigma(s)}^{-\beta}1_{\{\sigma(s)<T_0\}}\right]\asymp 1,\quad \text{when }8+2\nu+\kappa\beta<2\kappa,
\end{equation}
where the constants in $\asymp$ depend only on $\kappa, \nu, \beta$.
\end{lemma}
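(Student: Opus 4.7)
My plan is to bound the expectation in two steps: an easy lower bound via pointwise estimates on $J_t$, and an upper bound via a reparametrization that reveals a Jacobi-type diffusion on $(0,1)$. For the lower bound, $J_t = (V_t - O_t)/(V_t - W_t) \in (0,1]$ implies $J_t^{-\beta} \ge 1$ for $\beta > 0$. Since $\Upsilon_t$ is continuous, strictly decreasing ($d\log\Upsilon_t = -2(V_t-O_t)/[(V_t-W_t)^2(O_t-W_t)]\,dt < 0$), with $\Upsilon_0 = 1$ and $\Upsilon_t \to 0$ at the accumulation time $T = T_0$ (finite since $\nu \le \kappa/2 - 4$), we have $\sigma(s) < T_0$ almost surely. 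Hence $\E[J_{\sigma(s)}^{-\beta} 1_{\{\sigma(s)<T_0\}}] \ge 1$.

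For the upper bound, I would first derive the SDEs for $U_t := V_t - W_t$, $P_t := O_t - W_t$, $V_t - O_t$ and $g_t'(1)$ from the $\SLE_\kappa(\nu)$ driving SDE and the Loewner equation. Under the intrinsic time change $\tau(t) := \int_0^t U_s^{-2}\,ds$, the ratio $Y_\tau := P_\tau/U_\tau \in (0,1)$ becomes a time-homogeneous diffusion. Under the further radial reparametrization $\Theta := \tfrac{1}{2}\log(1/\Upsilon)$ (so $d\Theta/d\tau = (1-Y)/Y$ and $\sigma(s)$ corresponds to $\Theta = s$), setting $Z_\Theta := 1 - Y_\Theta = J_\Theta$, I find
\[
dZ_\Theta = \bigl[(1 - Z_\Theta)(\kappa - \nu - 2) - 2\bigr]\,d\Theta + \sqrt{\kappa Z_\Theta(1-Z_\Theta)}\,d\tilde B_\Theta.
\]
A direct computation of the scale function and speed measure shows this Jacobi-type diffusion is positive recurrent on $(0,1)$ with stationary density proportional to $Z^{\alpha - 1}(1-Z)^{\gamma - 1}$ where $\alpha = (2\kappa - 8 - 2\nu)/\kappa$ and $\gamma = 4/\kappa$; its stationary $-\beta$-th moment $B(\alpha - \beta, \gamma)/B(\alpha, \gamma)$ is finite exactly when $\beta < \alpha$, matching the hypothesis $8 + 2\nu + \kappa\beta < 2\kappa$. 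Near $Z = 0$ the SDE matches, after the scaling $\tilde Z = 4Z/\kappa$, a squared Bessel process of dimension $\delta = 2\alpha \ge 2$, which accounts for the non-attainability of $Z = 0$.

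To finish the proof, I would show $\E[Z_\Theta^{-\beta}]$ is uniformly bounded in $\Theta \ge 0$ (with $Z_0 = 1$) for $\beta < \alpha$: this follows either from a Lyapunov argument (constructing $h$ with $h \asymp Z^{-\beta}$ near $Z = 0$, $h$ bounded near $Z = 1$, and $\mathcal{L}h \le C_1 h + C_2$ for the generator $\mathcal{L}$ of the $Z$-diffusion), or from convergence of $Z_\Theta$ to the stationary law combined with uniform integrability (which holds whenever $\beta + \varepsilon < \alpha$ for some $\varepsilon > 0$). The main obstacle is the Lyapunov construction in the full range $\beta < (2\kappa - 8 - 2\nu)/\kappa$: the naive choice $h(Z) = Z^{-\beta}$ yields $\mathcal{L}h = \beta Z^{-\beta-1}\{(1-Z)[\beta\kappa/2 - \kappa/2 + \nu + 2] + 2\}$, which is nonpositive near $Z=0$ only when $\beta \le (\kappa - 8 - 2\nu)/\kappa$ (the value at which $\Upsilon_t^\beta J_t^{-\beta} = (U_t/g_t'(1))^\beta$ is an exact local martingale under $\SLE_\kappa(\nu)$), and positive near $Z=1$ regardless. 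For larger $\beta$ one must add corrections such as $(1-Z)^c$ exploiting the non-linearity of the diffusion near $Z = 1$ (where the diffusion degenerates and the drift is the repelling constant $-2$) to absorb the positive drift contribution that $Z^{-\beta}$ alone produces; translating back to the $\SLE_\kappa(\nu)$ coordinates, this corresponds to activating the auxiliary factors $P_t^{a_4}g_t'(0^+)^{a_5}$ in a candidate supermartingale $M_t = g_t'(1)^{a_1}U_t^{a_2}(V_t-O_t)^{a_3}P_t^{a_4}g_t'(0^+)^{a_5}$ and navigating the three quadratic drift-sign conditions obtained from Ito's formula.
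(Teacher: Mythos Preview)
Your lower bound and the time-change reduction to a Jacobi-type diffusion on $(0,1)$ are correct and match the paper's setup exactly; your SDE for $Z_\Theta=\hat J_s$ and the stationary density $Z^{\alpha-1}(1-Z)^{\gamma-1}$ with $\alpha=(2\kappa-8-2\nu)/\kappa$, $\gamma=4/\kappa$ agree with what the paper computes.

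The gap is in your upper bound. You correctly recognize that neither the naive Lyapunov function $h(Z)=Z^{-\beta}$ nor the ``convergence plus uniform integrability'' route closes easily in the full range $\beta<\alpha$, and you leave the construction of a corrected supermartingale unfinished (your uniform-integrability argument is in fact circular: to get UI of $Z_\Theta^{-\beta}$ you invoke boundedness of $\E[Z_\Theta^{-\beta-\eps}]$, which is the same statement at a larger exponent). The paper sidesteps all of this with a one-line monotone coupling: since the diffusion is one-dimensional and starts at the maximal value $\hat J_0=1$, the comparison theorem for SDEs (Yamada--Watanabe type, valid here because the diffusion coefficient $\sqrt{\kappa z(1-z)}$ is H\"older-$\tfrac12$) lets one couple $(\hat J_s)$ with the stationary process $(\tilde J_s)$ driven by the same Brownian motion so that $\hat J_s\ge \tilde J_s$ for all $s\ge 0$. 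Then immediately
\[
\E[\hat J_s^{-\beta}]\le \E[\tilde J_s^{-\beta}]
=\frac{1}{B(\alpha,\gamma)}\int_0^1 z^{\alpha-\beta-1}(1-z)^{\gamma-1}\,dz
=\frac{B(\alpha-\beta,\gamma)}{B(\alpha,\gamma)},
\]
which is finite precisely when $\beta<\alpha$, i.e.\ $8+2\nu+\kappa\beta<2\kappa$. The key observation you are missing is that the initial condition $\hat J_0=1$ is the \emph{largest} possible value of the process, so stochastic domination by the stationary law is automatic and no Lyapunov construction is needed.
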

\begin{proof}
Since $0\le J_t\le 1$, we only need to show the upper bounds. 
Define $X_t=V_t-W_t$.
We know that
\[dW_t=\sqrt{\kappa}dB_t+\frac{\nu dt}{W_t-V_t},\quad dV_t=\frac{2dt}{V_t-W_t},\]
where $B$ is a standard 1-dimensional Brownian motion.
By It\^o's formula, we have that
\[dJ_t=\frac{J_t}{X_t^2}\left(\kappa-\nu-2-\frac{2}{1-J_t}\right)dt+\frac{J_t}{X_t}\sqrt{\kappa}dB_t,\quad d\Upsilon_t=\Upsilon_t\frac{-2J_t dt}{X_t^2(1-J_t)}.\]
Recall that $\sigma(s)=\inf\{t: \Upsilon_t=e^{-2s}\}$, and denote by $\hat{X}, \hat{J}, \hat{\Upsilon}$ the processes indexed by $\sigma(s)$. Then we have that
\[d\sigma(s)=\hat{X}_s^2\frac{1-\hat{J}_s}{\hat{J}_s}ds,\quad d\hat{J}_s=\left(\kappa-\nu-4-(\kappa-\nu-2)\hat{J}_s\right)ds+\sqrt{\kappa \hat{J}_s(1-\hat{J}_s)}d\hat{B}_s,\]
where $\hat{B}$ is a standard 1-dimensional Brownian motion.
By \cite[Equations (56), (62)]{LawlerMinkowskiSLERealLine} and \cite[Appendix B]{ZhanErgodicityTipSLE}, we know that $\hat{J}$ has an invariant density on $(0,1)$, which is proportional to $y^{1-(8+2\nu)/\kappa}(1-y)^{4/\kappa-1}$.
Moreover, since $\hat J_0=1$, by a standard coupling argument, we may couple $(\hat J_s)$ with the stationary process $(\tilde J_s)$ that satisfies the same equation as $(\hat J_s)$, such that $\hat J_s\ge \tilde J_s$ for all $s\ge 0$. Then we get $\E[\hat{J}_s^{-\beta}]\le \E[\tilde J_s^{-\beta}]$, which is a finite constant if $8+2\nu+\kappa\beta<2\kappa$. This gives the upper bound in (\ref{eqn::integrable_J_sigma}) and completes the proof of (\ref{eqn::integrable_J_sigma}). 
\end{proof}

\begin{proof}[Proof of Proposition \ref{prop::sle_boundary_estimate}]
Let $O_t$ be the image of the rightmost point of $\eta[0,t]\cap\R$ under $g_t$.
Define
\[\Upsilon_t=\frac{g_t(1)-O_t}{g_t'(1)},\quad J_t=\frac{g_t(1)-O_t}{g_t(1)-W_t}.\]
Set
\[M_t=g_t'(1)^{\nu(\nu+4-\kappa)/(4\kappa)}(g_t(1)-W_t)^{\nu/\kappa}, \quad \text{where }\nu=-\kappa u_1(\lambda).\]
Then $M$ is a local martingale for $\eta$, and from Lemma \ref{lem::sle_mart}, the law of $\eta$ weighted by $M$ is the law of $\SLE_{\kappa}(\nu)$ with force point $1$.
Set $\beta=u_1(\lambda)+\lambda-b$.
 Then we have
\[M_t=(g_t(1)-W_t)^{\lambda-b}g_t'(1)^{b}\Upsilon_t^{-\beta}J_t^{\beta}.\]
At time $t=\hat{\tau}_{\eps}<\infty$, we have $\Upsilon_t=\eps$, thus
\[\E\left[(g_{\hat{\tau}_{\eps}}(1)-W_{\hat{\tau}_{\eps}})^{\lambda-b}g_{\hat{\tau}_{\eps}}'(1)^{b} 1_{\{\hat{\tau}_{\eps}<T_0\}}\right]\asymp \eps^\beta\E^*\left[\left(J^*_{\hat{\tau}_{\eps}^*}\right)^{-\beta}1_{\{\hat{\tau}_{\eps}^*<T_0^*\}}\right]\asymp \eps^{\beta},\]
where $\PP^*$ is the law of $\SLE_{\kappa}(\nu)$ with force point $x$ and $\eta^*, J^*, \hat{\tau}_{\eps}^*, T_0^*$ are defined accordingly, and the last relation is due to (\ref{eqn::integrable_J_sigma}). 
\end{proof}

\begin{remark}\label{rem::sle_boundary_firststep}
Fix $\kappa>0$ and let $\eta$ be an $\SLE_{\kappa}$. For $x>\eps>0$,
let $u_1(\lambda)$ and $b$ be as in Proposition \ref{prop::sle_boundary_estimate}. By the scaling invariance of $\SLE$, we have
\begin{equation}\label{eqn::estimate_derivative_scale}
\E\left[(g_{\hat{\tau}_{\eps}}(x)-W_{\hat{\tau}_{\eps}})^{\lambda-b}g_{\hat{\tau}_{\eps}}'(x)^{b} 1_{\{\hat{\tau}_{\eps}<T_0\}}\right]\asymp x^{-u_1(\lambda)}\eps^{u_1(\lambda)+\lambda-b},
\end{equation}
where the constants in $\asymp$ depend only on $\kappa$, and $\lambda, b$.
Taking $\lambda=b=0$, we have
\[\PP[\tau_{\eps}<\infty]\asymp \PP[\hat{\tau}_{\eps}<\infty]\asymp \left(\frac{\eps}{x}\right)^{\alpha_1^+},\quad \text{where }\alpha_1^+=u_1(0)=0\vee (8/\kappa-1).\]
This implies that (\ref{eqn::boundary_arm_greater4_odd}) holds for $n=1$.
\end{remark}

\subsection{From $2n-1$ to $2n$}\label{subsec::greater4_odd_even}
\begin{lemma}\label{lem::estimate_derivative_even}
Fix $\kappa>4$ and let $\eta$ be an $\SLE_{\kappa}$. For $y<0<x$, define
\[\sigma=\inf\{t: \eta(t)\in (-\infty, y]\},\quad T=\inf\{t: \eta(t)\in [x,\infty)\},\quad F=\{\dist(\eta[0,\sigma], x)\ge cx\},\]
where $c$ is the constant decided in Lemma \ref{lem::stochasticbound2}.
For $\lambda\ge 0$, define
\[u_2(\lambda)=\frac{1}{\kappa}(\kappa/2-2)+\frac{1}{\kappa}\sqrt{4\kappa\lambda+(\kappa/2-2)^2}.\]
Then we have, for $\lambda\ge 0$ and $b\le u_2(\lambda)$,
\begin{align*}
\E\left[g_{\sigma}'(x)^{\lambda}
(g_{\sigma}(x)-W_{\sigma})^b 1_{\{\sigma<T\}\cap F}\right]&\gtrsim x^{u_2(\lambda)}(x-y)^{b-u_2(\lambda)},\\
\E\left[g_{\sigma}'(x)^{\lambda}
(g_{\sigma}(x)-W_{\sigma})^b 1_{\{\sigma<T\}}\right]&\lesssim x^{u_2(\lambda)}(x-y)^{b-u_2(\lambda)},
\end{align*}
where the constants in $\gtrsim$ and $\lesssim$ depend only on $\kappa$ and $\lambda, b$.
\end{lemma}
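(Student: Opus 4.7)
The plan is to run the same martingale-weighting argument as in Proposition \ref{prop::sle_boundary_estimate}, with exponents tuned so that the weighted law is a one-force-point $\SLE_\kappa(\nu)$ process whose force point $x$ is almost surely not hit, and then to close the estimate using Lemma \ref{lem::stochasticbound2}. Set $\nu := \kappa u_2(\lambda)$. A direct computation shows that $u_2(\lambda)$ is the positive root of $\kappa u^2 + (4-\kappa)u - 4\lambda = 0$; equivalently $\nu(\nu+4-\kappa)/(4\kappa)=\lambda$ and $\nu/\kappa = u_2(\lambda)$. By Lemma \ref{lem::sle_mart} (taking the left $\rho$ to be $0$), the process $M_t := g_t'(x)^{\lambda}(g_t(x)-W_t)^{u_2(\lambda)}$ is a local martingale for $\eta$, and reweighting $\eta$ by $M/M_0$ (up to the first time $W$ meets $g_t(x)$) produces an $\SLE_\kappa(\nu)$ in $\HH$ from $0$ to $\infty$ with force point $x$; call this law $\PP^*$. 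For $\kappa > 4$ and $\lambda \ge 0$, one checks that $\nu \ge \kappa - 4 \ge \kappa/2 - 2$, so under $\PP^*$ the curve a.s.\ never hits $[x,\infty)$, i.e.\ $T = \infty$ a.s.\ under $\PP^*$; since $\kappa > 4$, $\sigma$ is also a.s.\ finite.

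Observe that at time $\sigma$ the tip $\eta(\sigma) \le y$ is the leftmost swallowed real point, so with the paper's convention $W_\sigma = g_\sigma(y)$, whence $g_\sigma(x) - W_\sigma = g_\sigma(x) - g_\sigma(y)$. The standard Girsanov identity coming from Lemma \ref{lem::sle_mart} reads, for any $\mathcal{F}_\sigma$-measurable $X \ge 0$,
\[ M_0 \, \E^*[X] = \E\!\left[ X \, M_\sigma \, 1_{\{\sigma < T\}}\right], \qquad M_0 = x^{u_2(\lambda)}. \]
Since $g_\sigma'(x)^\lambda (g_\sigma(x)-W_\sigma)^b = M_\sigma \cdot (g_\sigma(x) - g_\sigma(y))^{b-u_2(\lambda)}$, applying this identity with $X = (g_\sigma(x)-g_\sigma(y))^{b-u_2(\lambda)}$, once with and once without the indicator $1_F$, reduces the two claimed inequalities to bounds on $\E^*[(g_\sigma(x)-g_\sigma(y))^{b-u_2(\lambda)}(\cdot)]$ against $(x-y)^{b-u_2(\lambda)}$.

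Since $b \le u_2(\lambda)$, the exponent $b-u_2(\lambda)$ is non-positive. Lemma \ref{lem::stochasticbound2} applied under $\PP^*$ (admissible because $\nu \ge \kappa/2-2$) supplies the lower bound
\[ \E^*\!\left[(g_\sigma(x)-g_\sigma(y))^{b-u_2(\lambda)} 1_F\right] \gtrsim (x-y)^{b-u_2(\lambda)}, \]
while the monotonicity $g_\sigma(x) - g_\sigma(y) \ge x-y$ combined with the non-positive exponent gives the trivial upper bound $\E^*[(g_\sigma(x)-g_\sigma(y))^{b-u_2(\lambda)}] \le (x-y)^{b-u_2(\lambda)}$; multiplying by $M_0 = x^{u_2(\lambda)}$ yields the two displayed estimates. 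The only step requiring any real thought is the verification that $\nu = \kappa u_2(\lambda)$ falls in the non-hitting regime $[\kappa/2-2,\infty)$, which is where the hypothesis $\kappa > 4$ is used; once this is in place the rest is the same martingale calculus already exploited in Proposition \ref{prop::sle_boundary_estimate}.
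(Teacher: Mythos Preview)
Your proof is correct and follows essentially the same route as the paper: introduce the local martingale $M_t=g_t'(x)^{\lambda}(g_t(x)-W_t)^{u_2(\lambda)}$ with $\nu=\kappa u_2(\lambda)$, change measure to $\SLE_\kappa(\nu)$ with force point $x$, observe that $\nu\ge\kappa/2-2$ forces $T^*=\infty$, and then reduce both inequalities to bounds on $\E^*[(g_\sigma(x)-g_\sigma(y))^{b-u_2(\lambda)}]$ handled by Lemma~\ref{lem::stochasticbound2} and monotonicity of $g_t(x)-g_t(y)$. Your explicit check that $\nu\ge\kappa-4\ge\kappa/2-2$ and that $W_\sigma=g_\sigma(y)$ are the only cosmetic additions.
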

\begin{proof}
Define
\[M_t=g_t'(x)^{\nu(\nu+4-\kappa)/(4\kappa)}(g_t(x)-W_t)^{\nu/\kappa},\quad \text{where }\nu=\kappa u_2(\lambda).\]
Then $M$ is a local martingale for $\eta$ and the law of $\eta$ weighted by $M$ is the law of $\SLE_{\kappa}(\nu)$ with force point $x$.
By the definition of $u_2$, we can also write
\[M_t=g_t'(x)^{\lambda}(g_t(x)-W_t)^{u_2(\lambda)}.\]
Thus
\[\E\left[g_{\sigma}'(x)^{\lambda}
(g_{\sigma}(x)-W_{\sigma})^b 1_{\{\sigma<T\}}\right]=M_0\E^*\left[\left(g^*_{\sigma^*}(x)-g^*_{\sigma^*}(y)\right)^{b-u_2(\lambda)}1_{\{\sigma^*<T^*\}}\right],\]
where $\PP^*$ denotes the law of $\SLE_{\kappa}(\nu)$ with force point $x$ and $\eta^*, g^*, \sigma^*$ and $T^*$ are defined accordingly. Since $\nu\ge\kappa/2-2$, the curve will never swallows $x$, thus $T^*=\infty$. Note that $M_0=x^{u_2(\lambda)}$. Therefore, proving the conclusion boils down to showing
\begin{align}
\E^*\left[\left(g^*_{\sigma^*}(x)-g^*_{\sigma^*}(y)\right)^{b-u_2(\lambda)}1_{F^*}\right]&\gtrsim (x-y)^{b-u_2(\lambda)},\quad \text{where }F^*=\{\dist(\eta^*[0,\sigma^*], x)\ge cx\};\label{eqn::estimate_derivative_even_lower}\\
\E^*\left[\left(g^*_{\sigma^*}(x)-g^*_{\sigma^*}(y)\right)^{b-u_2(\lambda)}\right]&\lesssim (x-y)^{b-u_2(\lambda)}.\label{eqn::estimate_derivative_even_upper}
\end{align}
Equation (\ref{eqn::estimate_derivative_even_lower}) is true by Lemma \ref{lem::stochasticbound2}. Since the quantity $(g^*_t(x)-g^*_t(y))$ is increasing in $t$, we have
\[(g^*_{\sigma^*}(x)-g^*_{\sigma^*}(y))\ge x-y.\]
Combining with the fact that $b-u_2(\lambda)\le 0$, we obtain (\ref{eqn::estimate_derivative_even_upper}).
\end{proof}

\begin{remark}\label{rem::hat_initial}
Taking $\lambda=b=0$ in Lemma \ref{lem::estimate_derivative_even}, we have
\[\PP[\sigma<T]\asymp x^{u_2(0)}.\]
This implies that (\ref{eqn::boundary_arm_hat_odd}) holds for $n=1$ with
\[\hat{\alpha}^+_1=u_2(0)=1-4/\kappa.\]
\end{remark}

\begin{lemma}\label{lem::greater4_odd_even}
Assume the same notations as in Theorem \ref{thm::boundary_arm_greater4}. Suppose that (\ref{eqn::boundary_arm_greater4_odd}) holds for $2n-1$, then (\ref{eqn::boundary_arm_greater4_even}) holds for $2n$.
\end{lemma}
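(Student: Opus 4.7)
The plan is to apply the strong Markov property of $\SLE_\kappa$ at the first time $\sigma_1$ that $\eta$ hits $(-\infty,y)$, reducing the event $H_{2n}$ to an $H_{2n-1}$ event for a fresh SLE in the conformally mapped domain, and then to evaluate the resulting expectation using Lemma \ref{lem::estimate_derivative_even}. Precisely, on $\{\sigma_1<T\}$, let $\tilde\eta$ denote the image of $\eta(\sigma_1+\cdot)$ under $g_{\sigma_1}(\cdot)-W_{\sigma_1}$; by the conformal Markov property, $\tilde\eta$ is an independent $\SLE_\kappa$ from $0$ to $\infty$ in $\HH$, and the residual stopping times $\tau_1<\sigma_2<\cdots<\sigma_n<\tau_n<T$ of $\eta$ after $\sigma_1$ correspond to the stopping times defining $H_{2n-1}(\eps^*,x^*,y^*)$ for $\tilde\eta$, with
\[x^*=g_{\sigma_1}(x)-W_{\sigma_1},\quad y^*=g_{\sigma_1}(y)-W_{\sigma_1},\quad \eps^*\asymp\eps\,g_{\sigma_1}'(x),\]
where $g_{\sigma_1}(y)$ denotes the image of the leftmost point of $K_{\sigma_1}\cap\R$ (since $y$ is swallowed at $\sigma_1$). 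The relation $\eps^*\asymp\eps\,g_{\sigma_1}'(x)$ follows from Lemma \ref{lem::image_insideball}, valid on the event $F=\{\dist(\eta[0,\sigma_1],x)\ge cx\}$ (with $c$ small enough that $cx\ge 16\eps$, which is arrangeable since $x\ge\eps$). Combining with the inductive hypothesis (\ref{eqn::boundary_arm_greater4_odd}) at level $2n-1$ applied to $\tilde\eta$ gives
\[\PP[H_{2n}(\eps,x,y)\cap F]\asymp \eps^{\alpha_{2n-1}^+}\,\E\!\left[1_{\{\sigma_1<T\}\cap F}\,g_{\sigma_1}'(x)^{\alpha_{2n-1}^+}\bigl(g_{\sigma_1}(x)-W_{\sigma_1}\bigr)^{\alpha_{2n-2}^+-\alpha_{2n-1}^+}\bigl(g_{\sigma_1}(x)-g_{\sigma_1}(y)\bigr)^{-\alpha_{2n-2}^+}\right].\]

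The key algebraic identity, verified by direct computation with (\ref{eqn::boundary_arm_formula_less8}) and (\ref{eqn::boundary_arm_formula_greater8}), is $u_2(\alpha_{2n-1}^+)=\alpha_{2n}^+-\alpha_{2n-1}^+$. Set $\lambda=\alpha_{2n-1}^+$ and $b=\alpha_{2n-2}^+-\alpha_{2n-1}^+$, so $b\le 0<u_2(\lambda)$. For the upper bound, drop $F$ and use the monotonicity $g_{\sigma_1}(x)-g_{\sigma_1}(y)\ge x-y$ together with $-\alpha_{2n-2}^+\le 0$ to replace the last factor by $(x-y)^{-\alpha_{2n-2}^+}$; Lemma \ref{lem::estimate_derivative_even} bounds the remaining expectation by $x^{\alpha_{2n}^+-\alpha_{2n-1}^+}(x-y)^{\alpha_{2n-2}^+-\alpha_{2n}^+}$, yielding the desired $\lesssim x^{\alpha_{2n}^+-\alpha_{2n-1}^+}(x-y)^{-\alpha_{2n}^+}$. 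For the lower bound, repeat the Girsanov reduction used in the proof of Lemma \ref{lem::estimate_derivative_even}: weight by the local martingale $M_t=g_t'(x)^{\alpha_{2n-1}^+}(g_t(x)-W_t)^{u_2(\alpha_{2n-1}^+)}$ to transform $\eta$ into $\SLE_\kappa(\nu)$ with force point $x$ and $\nu=\kappa\,u_2(\alpha_{2n-1}^+)\ge \kappa/2-2$; then restrict to $F^*$, use $g^*_{\sigma^*}(x)-W^*_{\sigma^*}\le g^*_{\sigma^*}(x)-g^*_{\sigma^*}(y)$ with the nonpositive exponent $\alpha_{2n-2}^+-\alpha_{2n}^+$ to combine the two factors into $(g^*_{\sigma^*}(x)-g^*_{\sigma^*}(y))^{-\alpha_{2n}^+}$, and apply Lemma \ref{lem::stochasticbound2} to obtain the matching lower bound.

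The delicate part of the argument is the Markov reduction in the first paragraph: tracking carefully how $g_{\sigma_1}$ transforms both $B(x,\eps)$ (to a region sandwiched between balls of comparable radii around $x^*$, via Lemma \ref{lem::image_insideball}) and the accessible portion $(-\infty,y)\setminus K_{\sigma_1}$ (to the half-line $(-\infty,y^*)$ in the image), so that the residual event genuinely is $H_{2n-1}(\eps^*,x^*,y^*)$ for $\tilde\eta$. Once this identification is in place, the rest is algebra of exponents plus the two previously established estimates.
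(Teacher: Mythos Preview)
Your overall strategy---stop at $\sigma_1$, apply the conformal Markov property, reduce to $H_{2n-1}$ for a fresh SLE, and finish with Lemma~\ref{lem::estimate_derivative_even} together with the identity $u_2(\alpha_{2n-1}^+)=\alpha_{2n}^+-\alpha_{2n-1}^+$---is exactly the paper's approach, and your lower bound is essentially correct (if somewhat more laborious than needed). The upper bound, however, has a genuine gap.

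The problem is your control of $\eps^*$. You invoke Lemma~\ref{lem::image_insideball}, which requires $\dist(K_{\sigma_1},x)\ge 16\eps$, and try to arrange this via the event $F=\{\dist(\eta[0,\sigma_1],x)\ge cx\}$ ``with $c$ small enough that $cx\ge 16\eps$.'' But this is self-contradictory: since $\eta(0)=0$, one always has $\dist(\eta[0,\sigma_1],x)\le x$, so $F$ is nonempty only for $c\le 1$; then $cx\ge 16\eps$ would force $x\ge 16\eps$, which is not assumed (and in any case you then ``drop $F$'' for the upper bound, so no distance control remains at all). More to the point, on the event $\{\sigma_1<T\}$ the curve $\eta[0,\sigma_1]$ may well have entered $B(x,\eps)$---the definition of $H_{2n}$ explicitly allows this, which is why $\tau_1$ is defined as hitting the connected component of $\partial B(x,\eps)\setminus\eta[0,\sigma_1]$ containing $x+\eps$. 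So Lemma~\ref{lem::image_insideball} is simply not available here. The paper uses Lemma~\ref{lem::extremallength_argument} instead: with no distance hypothesis, that lemma shows the image of the relevant arc of $\partial B(x,\eps)$ is contained in a ball of radius $8\eps\,g_{\sigma_1}'(x+3\eps)$ centered at $g_{\sigma_1}(x+3\eps)$. The shift from $x$ to $x+3\eps$ is harmless (since $T$, the swallowing time of $x$, precedes that of $x+3\eps$, so Lemma~\ref{lem::estimate_derivative_even} applies at $x+3\eps$, and $x+3\eps\asymp x$).

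A small simplification you missed: at time $\sigma_1$ the tip is the leftmost point of $K_{\sigma_1}\cap\R$, so $W_{\sigma_1}=g_{\sigma_1}(y)$ and hence $y^*=0$. Your two factors then collapse to $(g_{\sigma_1}(x)-W_{\sigma_1})^{-\alpha_{2n-1}^+}$, and the inductive hypothesis reads simply $\PP[\tilde H_{2n-1}(\eps^*,x^*,0)]\asymp(\eps^*/x^*)^{\alpha_{2n-1}^+}$. With this, the lower bound follows directly from the lower-bound part of Lemma~\ref{lem::estimate_derivative_even} (which already contains the restriction to $F$ and the appeal to Lemma~\ref{lem::stochasticbound2}); there is no need to re-run the Girsanov argument by hand.
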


\begin{proof}[Proof of Lemma \ref{lem::greater4_odd_even}, Upper Bound.]
Let $\eta$ be an $\SLE_{\kappa}$ and define
\[\sigma=\inf\{t: \eta(t)\in (-\infty, y]\},\quad T=\inf\{t: \eta(t)\in [x,\infty)\}.\]
We stop the curve at time $\sigma$. Let $\tilde{\eta}$ be the image of $\eta[\sigma,\infty)$ under the centered comformal map $f:=g_{\sigma}-W_{\sigma}$. Then $\tilde{\eta}$ is an $\SLE_{\kappa}$. Define $\tilde{H}_{2n-1}$ for $\tilde{\eta}$.

Given $\eta[0,\sigma]$ with $\sigma<T$, consider the event $H_{2n}(\eps, x, y)$. Denote by $\gamma$ the connected component of $B(x,\eps)\setminus\eta[0,\sigma]$ whose boundary contains $x+\eps$.
We wish to control the image of $(-\infty, y]$ and the image of $\gamma$ under $f$. We have the following observations.
\begin{itemize}
\item At time $\sigma$, we have $W_{\sigma}=g_{\sigma}(y)$, thus $f(y)=0$.
\item By Lemma \ref{lem::extremallength_argument}, we know that $f(\gamma)$ is contained in the ball with center $f(x+3\eps)$ and radius $8\eps f'(x+3\eps)$. 
\end{itemize}
Combining these two facts, we know that, given $\eta[0,\sigma]$ with $\sigma<T$, the event $H_{2n}(\eps, x, y)$ implies the event $\tilde{H}_{2n-1}(8\eps f'(x+3\eps), f(x+3\eps), 0)$. 
If $f(x+3\eps)\ge 8\eps f'(x+3\eps)$, 
by the assumption hypothesis, we have
\[\PP[H_{2n}(\eps, x, y)\cond \eta[0,\sigma], \sigma<T]\lesssim \left(\frac{\eps g_{\sigma}'(x+3\eps)}{g_{\sigma}(x+3\eps)-W_{\sigma}}\right)^{\alpha_{2n-1}^+}.\]
If $f(x+3\eps)\le 8\eps f'(x+3\eps)$, the above upper bound is trivially true. Therefore, the above upper bound always holds. Then
\[\PP[H_{2n}(\eps, x, y)]\lesssim \eps^{\alpha_{2n-1}^+}\E\left[g'_{\sigma}(x+3\eps)^{\alpha_{2n-1}^+}(g_{\sigma}(x+3\eps)-W_{\sigma})^{-\alpha_{2n-1}^+}1_{\{\sigma<T\}}\right].\]
To apply Lemma \ref{lem::estimate_derivative_even}, we only need to note that $T$ is the first time that $\eta$ swallows $x$ which happens before the first time that $\eta$ swallows $x+3\eps$. Note further that
\begin{equation}\label{eqn::from2n-1to2n}
u_2(\alpha_{2n-1}^+)=\alpha_{2n}^+-\alpha_{2n-1}^+.
\end{equation}
Thus, by Lemma \ref{lem::estimate_derivative_even}, we have
\[\PP[H_{2n}(\eps, x, y)]\lesssim \eps^{\alpha_{2n-1}^+}x^{\alpha_{2n}^+-\alpha_{2n-1}^+}(x-y)^{-\alpha_{2n}^+}=\left(\frac{x}{x-y}\right)^{\alpha_{2n}^+}\left(\frac{\eps}{x}\right)^{\alpha_{2n-1}^+}.\]
This completes the proof of the upper bound.
\end{proof}

\begin{proof}[Proof of Lemma \ref{lem::greater4_odd_even}, Lower Bound.]
Let $\eta$ be an $\SLE_{\kappa}$ and assume the same notations as in the proof of the upper bound. Define $F=\{\dist(\eta[0,\sigma], x)\ge c\eps\}$,
where $c$ is the constant decided in Lemma \ref{lem::stochasticbound2}.
We stop the curve at time $\sigma$. Let $\tilde{\eta}$ be the image of $\eta[\sigma,\infty)$ under the centered comformal map $f:=g_{\sigma}-W_{\sigma}$. Then $\tilde{\eta}$ is an $\SLE_{\kappa}$. Define $\tilde{H}_{2n-1}$ for $\tilde{\eta}$.

Given $\eta[0,\sigma]$ with $\{\sigma<T\}\cap F$, consider the event $H_{2n}(\eps, x, y)$. We wish to control the image of $(-\infty, y]$ and the image of $\partial B(x,\eps)$ under $f$. We have the following observations.
\begin{itemize}
\item At time $\sigma$, we have $W_{\sigma}=g_{\sigma}(y)$, thus $f(y)=0$.
\item On the event $F$, by Koebe 1/4 Theorem, we know that $f(B(x,\eps))$ contains the ball with center $f(x)$ and radius $cf'(x)\eps/4$. 
\end{itemize}
Combining these two facts, we know that, given $\eta[0,\sigma]$ with $\{\sigma<T\}\cap F$, the event $H_{2n}(\eps, x, y)$ contains the event $\tilde{H}_{2n-1}(f'(x)c\eps/4, f(x), 0)$. By the assumption hypothesis, we have
\[\PP[H_{2n}(\eps, x, y)\cond \eta[0,\sigma], \{\sigma<T\}\cap F]\gtrsim \left(\frac{\eps g_{\sigma}'(x)}{g_{\sigma}(x)-W_{\sigma}}\right)^{\alpha_{2n-1}^+}.\]
Therefore,
\[\PP[H_{2n}(\eps, x, y)]\gtrsim \eps^{\alpha_{2n-1}^+}\E\left[g'_{\sigma}(x)^{\alpha_{2n-1}^+}(g_{\sigma}(x)-W_{\sigma})^{-\alpha_{2n-1}^+}1_{\{\sigma<T\}\cap F}\right].\]
To apply Lemma \ref{lem::estimate_derivative_even}, we only need to note that $x\ge \eps$ and the event $F$ contains the event $\{\dist(\eta[0,\sigma], x)\ge cx\}$. By (\ref{eqn::from2n-1to2n}) and Lemma \ref{lem::estimate_derivative_even}, we have
\[\PP[H_{2n}(\eps, x, y)]\gtrsim \eps^{\alpha_{2n-1}^+}x^{\alpha_{2n}^+-\alpha_{2n-1}^+}(x-y)^{-\alpha_{2n}^+}=\left(\frac{x}{x-y}\right)^{\alpha_{2n}^+}\left(\frac{\eps}{x}\right)^{\alpha_{2n-1}^+}.\]
This completes the proof of the lower bound.
\end{proof}

\subsection{From $2n$ to $2n+1$}\label{subsec::greater4_even_odd}

\begin{lemma}\label{lem::greater4_even_odd}
Assume the same notations as in Theorem \ref{thm::boundary_arm_greater4}. Suppose that (\ref{eqn::boundary_arm_greater4_even}) holds for $2n$ with $n\ge 1$, then (\ref{eqn::boundary_arm_greater4_odd}) holds for $2n+1$.
\end{lemma}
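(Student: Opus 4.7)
The plan is to mirror the proof of Lemma~\ref{lem::greater4_odd_even}, with $\sigma$ replaced by $\tau_1$. Let $\eta$ be an $\SLE_\kappa$, stop at $\tau_1$ (the first time $\eta$ hits $B(x,\eps)$), and let $\tilde\eta$ be the image of $\eta[\tau_1,\infty)$ under the centered conformal map $f:=g_{\tau_1}-W_{\tau_1}$; then $\tilde\eta$ is an $\SLE_\kappa$ in $\HH$ from $0$ to $\infty$. Define $\tilde H_m$ for $\tilde\eta$.

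For the upper bound, I observe that on $\{\tau_1<T\}$, the event $H_{2n+1}(\eps,x,y)$ requires the post-$\tau_1$ curve to make $2n$ further alternating crossings, beginning by hitting $(-\infty,y]$. Under $f$, the point $y$ maps to $f(y)\le 0$, and by Lemma~\ref{lem::extremallength_argument}, the component of $f(\partial B(x,\eps)\setminus K_{\tau_1})$ containing $f(x+\eps)$ sits inside $B(f(x+3\eps),8\eps f'(x+3\eps))$. Hence on $\{\tau_1<T\}$,
\[H_{2n+1}(\eps,x,y)\subset\tilde H_{2n}\bigl(8\eps g'_{\tau_1}(x+3\eps),\,g_{\tau_1}(x+3\eps)-W_{\tau_1},\,g_{\tau_1}(y)-W_{\tau_1}\bigr).\]
Applying the induction hypothesis~(\ref{eqn::boundary_arm_greater4_even}) and the monotonicity $g_{\tau_1}(x+3\eps)-g_{\tau_1}(y)\ge x-y$ gives
\[\PP[H_{2n+1}]\lesssim\eps^{\alpha_{2n-1}^+}(x-y)^{-\alpha_{2n}^+}\,\E\!\Bigl[g'_{\tau_1}(x+3\eps)^{\alpha_{2n-1}^+}\bigl(g_{\tau_1}(x+3\eps)-W_{\tau_1}\bigr)^{\alpha_{2n}^+-\alpha_{2n-1}^+}1_{\tau_1<T}\Bigr].\]
The expectation is then evaluated via Proposition~\ref{prop::sle_boundary_estimate} (scaled to the point $x+3\eps$, cf.~Remark~\ref{rem::sle_boundary_firststep}) with $\lambda=\alpha_{2n}^+$ and $b=\alpha_{2n-1}^+$. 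A direct computation in each of the regimes $\kappa\in(4,8)$ and $\kappa\ge 8$ establishes the identities $u_1(\alpha_{2n}^+)=\alpha_{2n+1}^+-\alpha_{2n}^+$ and $2\alpha_{2n}^+-\alpha_{2n-1}^+-\alpha_{2n+1}^+=1-8/\kappa$; the latter reduces the range condition~(\ref{eqn::requirement_b}) to $-1<0$, which is trivial. Using $x+3\eps\asymp x$ then yields the upper bound $\bigl(x/(x-y)\bigr)^{\alpha_{2n}^+}(\eps/x)^{\alpha_{2n+1}^+}$.

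For the lower bound, I condition on a good event $F$ ensuring Koebe 1/4 applies, for instance $F=\{\dist(\eta[0,\tau_1],x+3\eps)\ge c\eps\}$ for a suitable small $c>0$. On $F$, the map $f$ is univalent on $B(x+3\eps,c\eps)$, so Koebe 1/4 gives $f(B(x+3\eps,c\eps))\supset B(f(x+3\eps),c\eps f'(x+3\eps)/4)$, yielding the reverse inclusion $\tilde H_{2n}(c\eps g'_{\tau_1}(x+3\eps)/4,\,g_{\tau_1}(x+3\eps)-W_{\tau_1},\,g_{\tau_1}(y)-W_{\tau_1})\subset H_{2n+1}(\eps,x,y)$ on $\{\tau_1<T\}\cap F$. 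The induction hypothesis together with the lower bound in Proposition~\ref{prop::sle_boundary_estimate} then closes the argument symmetrically.

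The hard part will be verifying $\PP[F\mid\eta[0,\tau_1],\tau_1<T]\gtrsim 1$: nothing in the definition of $\tau_1$ prevents the initial segment of $\eta$ from passing close to $x+3\eps$. I would resolve this through a force-point SLE comparison --- weighting $\eta$ by the martingale of Lemma~\ref{lem::sle_mart} with a force point at $x$ produces an $\SLE_\kappa(\rho)$ --- and invoke the stochastic domination in Lemma~\ref{lem::stochasticbound2} to show the weighted curve stays at distance at least $c\eps$ from $x+3\eps$ with constant probability.
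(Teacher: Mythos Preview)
Your overall strategy---stop the curve when it first reaches $B(x,\eps)$, map forward, and invoke the induction hypothesis for $H_{2n}$---matches the paper's, but both halves of your argument have genuine technical gaps.

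\textbf{Upper bound.} You stop at $\tau_1=\tau_\eps$ and then try to apply Proposition~\ref{prop::sle_boundary_estimate} (via Remark~\ref{rem::sle_boundary_firststep}) to evaluate $\E[g'_{\tau_1}(x+3\eps)^{\alpha_{2n-1}^+}(g_{\tau_1}(x+3\eps)-W_{\tau_1})^{\alpha_{2n}^+-\alpha_{2n-1}^+}1_{\tau_1<T}]$. But Proposition~\ref{prop::sle_boundary_estimate} is stated only at the stopping time $\hat\tau_\eps$ (defined through $\Upsilon_t=\eps$), and the paper explicitly flags that this is for technical reasons. The geometric hitting time $\tau_\eps$ of $B(x,\eps)$ bears no clean relation to $\hat\tau^{(x+3\eps)}$ for the point $x+3\eps$ (the curve may wander close to $x+3\eps$ without entering $B(x,\eps)$). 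The paper's fix is to stop instead at $\hat\tau_{64\eps}$ for the point $x$; by Koebe this is $\le \tau_{16\eps}$, so $\dist(K_{\hat\tau_{64\eps}},x)\ge 16\eps$ and Lemma~\ref{lem::image_insideball} (not Lemma~\ref{lem::extremallength_argument}) gives $f(B(x,\eps))\subset B(f(x),4\eps f'(x))$. Then Proposition~\ref{prop::sle_boundary_estimate} applies directly at $\hat\tau_{64\eps}$ with the point $x$.

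\textbf{Lower bound.} The auxiliary event $F$ and the attendant difficulty you describe are artifacts of using the point $x+3\eps$. If you use $x$ instead, then by definition of $\tau_\eps$ we have $\dist(K_{\tau_\eps},x)=\eps$, so Koebe $1/4$ gives $f(B(x,\eps))\supset B(f(x),f'(x)\eps/4)$ automatically---no event $F$ is needed. (Your formulation $\PP[F\mid\eta[0,\tau_1],\tau_1<T]\gtrsim 1$ is in any case ill-posed, since $F$ is $\eta[0,\tau_1]$-measurable.) Moreover, Lemma~\ref{lem::stochasticbound2} is the wrong tool: it concerns the stopping time $\sigma$ (first hit of $(-\infty,y]$) under $\SLE_\kappa(\nu)$ with $\nu\ge\kappa/2-2$, which is the setting of Lemma~\ref{lem::greater4_odd_even}, not this one. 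The paper instead writes the martingale $M_t=g_t'(x)^{\alpha_{2n+1}^+}(g_t(x)-W_t)^{\alpha_{2n}^+-\alpha_{2n+1}^+}$, changes measure to $\SLE_\kappa(\nu)$ with $\nu=\kappa(\alpha_{2n}^+-\alpha_{2n+1}^+)\le\kappa/2-4$, and then uses Lemma~\ref{lem::stochasticbound1} (for which $\nu\le\kappa/2-4$ is exactly the hypothesis) to control $\E^*[(g^*_{\tau_\eps^*}(x)-g^*_{\tau_\eps^*}(y))^{-\alpha_{2n}^+}]\gtrsim(x-y)^{-\alpha_{2n}^+}$.
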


\begin{proof}[Proof of Lemma \ref{lem::greater4_even_odd}, Upper Bound]
If $\eps \le x\le 64\eps$, by the assumption hypothesis we have
\[\PP[H_{2n+1}(\eps, x, y)]\le \PP[H_{2n}(\eps, x, y)]\lesssim \left(\frac{x}{x-y}\right)^{\alpha_{2n}^+},\]
which gives the upper bound in (\ref{eqn::boundary_arm_greater4_odd}) for $2n+1$.

In the following, we assume that $x> 64\eps$.
Let $\eta$ be an $\SLE_{\kappa}$. Define $T$ to be the first time that $\eta$ swallows $x$. For $\eps>0$, let $\tau_{\eps}$ be the first time that $\eta$ hits $B(x,\eps)$. Define $O_t$ to be the image of the rightmost point of $\eta[0,t]\cap\R$ under $g_t$. Define 
\[\hat{\tau}_{\eps}=\inf\{t: \frac{g_t(x)-O_t}{g_t'(x)}=\eps\}.\]
We stop the curve at time $\hat{\tau}_{64\eps}$. Let $\tilde{\eta}$ be the image of $\eta[\hat{\tau}_{64\eps}, \infty)$ under the centered conformal map $f:=g_{\hat{\tau}_{64\eps}}-W_{\hat{\tau}_{64\eps}}$. Then $\tilde{\eta}$ is an $\SLE_{\kappa}$. Define the event $\tilde{H}_{2n}$ for $\tilde{\eta}$.

Given $\eta[0,\hat{\tau}_{64\eps}]$, consider the event $H_{2n+1}(\eps, x, y)$. We wish to control the image of the ball $B(x,\eps)$ and the image of the half-infinite line $(-\infty, y)$ under $f$.
We have the following observations.
\begin{itemize}
\item By Koebe 1/4 theorem, we know that $\hat{\tau}_{64\eps}\le \tau_{16\eps}$. Combining with Lemma \ref{lem::image_insideball}, 
 we know that $f(B(x,\eps))$ is contained in the ball $B(f(x), 4f'(x)\eps)$.
\item At time $\hat{\tau}_{64\eps}$, there are two possibilities for the image of $y$ under $f$: if $y$ is not swallowed by $\eta[0,\hat{\tau}_{64\eps}]$, then $f(y)=g_{\hat{\tau}_{64\eps}}(y)-W_{\hat{\tau}_{64\eps}}$ is the image of $y$ under $f$; if $y$ is swallowed by $\eta[0,\hat{\tau}_{64\eps}]$, then the image of $y$ under $f$ is the image of leftmost point of $\eta[0,\hat{\tau}_{64\eps}]\cap\R$ under $f$, in this case, we still write $f(y)=g_{\hat{\tau}_{64\eps}}(y)-W_{\hat{\tau}_{64\eps}}$ as explained in Section \ref{sec::preliminaries}. 
\end{itemize}

Combining these two facts, we know that, given $\eta[0,\hat{\tau}_{64\eps}]$,  $H_{2n+1}(\eps, x, y)$ implies  $\tilde{H}_{2n}(4f'(x)\eps, f(x), f(y))$. By the assumption hypothesis, we have
\[\PP\left[H_{2n+1}(\eps, x, y)\cond \eta[0,\hat{\tau}_{64\eps}], \hat{\tau}_{64\eps}<T\right]\lesssim \left(\frac{g_{\hat{\tau}_{64\eps}}(x)-W_{\hat{\tau}_{64\eps}}}{g_{\hat{\tau}_{64\eps}}(x)-g_{\hat{\tau}_{64\eps}}(y)}\right)^{\alpha_{2n}^+}\left(\frac{g'_{\hat{\tau}_{64\eps}}(x)\eps}{g_{\hat{\tau}_{64\eps}}(x)-W_{\hat{\tau}_{64\eps}}}\right)^{\alpha_{2n-1}^+}.\]
For fixed $x$ and $y$, the quantity $g_t(x)-g_t(y)$ is increasing in $t$, thus $g_t(x)-g_t(y)\ge x-y$. Plugging in the above inequality, we have
\[\PP\left[H_{2n+1}(\eps, x, y)\right]\lesssim (x-y)^{-\alpha_{2n}^+}\eps^{\alpha_{2n-1}^+}\E\left[(g_{\hat{\tau}_{64\eps}}(x)-W_{\hat{\tau}_{64\eps}})^{\alpha_{2n}^+-\alpha_{2n-1}^+}g'_{\hat{\tau}_{64\eps}}(x)^{\alpha_{2n-1}^+} 1_{\{\hat{\tau}_{64\eps}<T\}}\right].\]
By Proposition \ref{prop::sle_boundary_estimate} and (\ref{eqn::estimate_derivative_scale}), we have
\[\PP\left[H_{2n+1}(\eps, x, y)\right]\lesssim (x-y)^{-\alpha_{2n}^+}\eps^{\alpha_{2n-1}^+}x^{-u_1(\alpha_{2n}^+)}\eps^{u_1(\alpha_{2n}^+)+\alpha_{2n}^+-\alpha_{2n-1}^+}.\]
Note that
\begin{equation}\label{eqn::from2nto2n+1}
\alpha_{2n+1}^+=u_1(\alpha_{2n}^+)+\alpha_{2n}^+.
\end{equation}
Therefore
\[\PP\left[H_{2n+1}(\eps, x, y)\right]\lesssim\left(\frac{x}{x-y}\right)^{\alpha_{2n}^+}\left(\frac{\eps}{x}\right)^{\alpha_{2n+1}^+}\]
which completes the proof. \end{proof}

\begin{proof}[Proof of Lemma \ref{lem::greater4_even_odd}, Lower Bound]
Let $\eta$ be an $\SLE_{\kappa}$. Define $T$ to be the first time that $\eta$ swallows $x$. For $\eps>0$, let $\tau_{\eps}$ be the first time that $\eta$ hits $B(x,\eps)$. We stop the curve at time $\tau_{\eps}$. Let $\tilde{\eta}$ be the image of $\eta[\tau_{\eps}, \infty)$ under the centered conformal map $f:=g_{\tau_{\eps}}-W_{\tau_{\eps}}$. Then $\tilde{\eta}$ is an $\SLE_{\kappa}$. Define the event $\tilde{H}_{2n}$ for $\tilde{\eta}$.

Given $\eta[0,\tau_{\eps}]$, consider the event $H_{2n+1}(\eps, x, y)$. We wish to control the image of the ball $B(x,\eps)$ and the image of the half-infinite line $(-\infty, y)$ under $f$.
We have the following observations.
\begin{itemize}
\item Applying Koebe 1/4 Theorem to $f$, we know that $f(B(x,\eps))$ contains the ball $B(f(x), f'(x)\eps/4)$.
\item At time $\tau_{\eps}$, we have $f(y)=g_{\tau_{\eps}}(y)-W_{\tau_{\eps}}$. Recall that if $y$ is swallowed by $\eta[0,\tau_{\eps}]$, then $f(y)$ should be understood as the image of the leftmost point of $\eta[0,\tau_{\eps}]\cap\R$ under $f$. 
\end{itemize}
Combining these two facts, we know that, given $\eta[0,\tau_{\eps}]$, the event $H_{2n+1}(\eps, x, y)$ contains $\tilde{H}_{2n}(f'(x)\eps/4, f(x), f(y))$. By the assumption hypothesis, we have
\begin{equation}\label{eqn::greater4_even_odd_lower_1}
\PP\left[H_{2n+1}(\eps, x, y)\cond \eta[0,\tau_{\eps}], \tau_{\eps}<T\right]\gtrsim \left(\frac{g_{\tau_{\eps}}(x)-W_{\tau_{\eps}}}{g_{\tau_{\eps}}(x)-g_{\tau_{\eps}}(y)}\right)^{\alpha_{2n}^+}\left(\frac{g'_{\tau_{\eps}}(x)\eps}{g_{\tau_{\eps}}(x)-W_{\tau_{\eps}}}\right)^{\alpha_{2n-1}^+}.
\end{equation}
For $t\ge 0$, let $O_t$ the image of the rightmost point of $\eta[0,t]\cap\R$ under $g_t$. Set
\[\Upsilon_t=\frac{g_t(x)-O_t}{g_t'(x)},\quad J_t=\frac{g_t(x)-O_t}{g_t(x)-W_t}.\]
Define
\[M_t=g_t'(x)^{\nu(\nu+4-\kappa)/(4\kappa)}(g_t(x)-W_t)^{\nu/\kappa},\quad \text{where }\nu=\kappa(\alpha_{2n}^+-\alpha_{2n+1}^+)\le \kappa/2-4.\]
Then $M$ is a local martinagle and the law of $\eta$ weighted by $M$ becomes the law of $\SLE_{\kappa}(\nu)$ with force point $x$. By (\ref{eqn::from2nto2n+1}), we have
\[\nu(\nu+4-\kappa)/(4\kappa)=\alpha_{2n+1}^+.\]
The local martingale $M$ can be written as
\[M_t=g_t'(x)^{\alpha_{2n+1}^+}(g_t(x)-W_t)^{\alpha_{2n}^+-\alpha_{2n+1}^+}=g_t'(x)^{\alpha_{2n-1}^+}(g_t(x)-W_t)^{\alpha_{2n}^+-\alpha_{2n-1}^+}\Upsilon_t^{\alpha_{2n-1}^+-\alpha_{2n+1}^+}J_t^{\alpha_{2n+1}^+-\alpha_{2n-1}^+}.\]
At time $t=\tau_{\eps}<T$, by Koebe 1/4 Theorem, we have $\Upsilon_t\asymp \eps$.  Since $J_t\le 1$, we have
\[M_{\tau_{\eps}}\eps^{\alpha_{2n+1}^+-\alpha_{2n-1}^+}\lesssim g_{\tau_{\eps}}'(x)^{\alpha_{2n-1}^+}(g_{\tau_{\eps}}(x)-W_{\tau_{\eps}})^{\alpha_{2n}^+-\alpha_{2n-1}^+}.\]
Combining with (\ref{eqn::greater4_even_odd_lower_1}) and $M_0=x^{\alpha_{2n}^+-\alpha_{2n+1}^+}$, we have
\[\PP[H_{2n+1}(\eps, x, y)]\gtrsim \eps^{\alpha_{2n+1}^+}x^{\alpha_{2n}^+-\alpha_{2n+1}^+}
\E^*\left[(g^*_{\tau^*_{\eps}}(x)-g^*_{\tau^*_{\eps}}(y))^{-\alpha_{2n}^+}1_{\{\tau^*<T^*\}}\right],\]
where $\PP^*$ denotes the law of $\SLE_{\kappa}(\nu)$ with force point $x$ and $g^*, \tau_{\eps}^*, T^*$ are defined for $\eta^*$ whose law is $\PP^*$ accordingly. Since $\nu\le\kappa/2-4$, the curve accumulates at the point $x$ at almost surely finite time $T^*$, thus $\{\tau^*_{\eps}<T^*\}$ always holds.
To complete the proof, it is sufficient to show
\begin{equation}\label{eqn::greater4_even_odd_lower_2}
\E^*\left[\left(g^*_{\tau^*_{\eps}}(x)-g^*_{\tau^*_{\eps}}(y)\right)^{-\alpha_{2n}^+}\right]\gtrsim (x-y)^{-\alpha_{2n}^+}.
\end{equation}
Since the quantity $g^*_t(x)-g^*_t(y)$ is increasing $t$, we know that
\[x-y\le g^*_{\tau^*_{\eps}}(x)-g^*_{\tau^*_{\eps}}(y)\le g^*_{T^*}(x)-g^*_{T^*}(y).\]
Combining with Lemma \ref{lem::stochasticbound1}, we obtain (\ref{eqn::greater4_even_odd_lower_2}) and complete the proof.
\end{proof}

\subsection{Proof of Theorems \ref{thm::boundary_arm_greater4} and \ref{thm::boundary_arm_hat}}
\begin{proof}[Proof of Theorem \ref{thm::boundary_arm_greater4}]
Combining Remark \ref{rem::sle_boundary_firststep} and Lemmas \ref{lem::greater4_even_odd} and \ref{lem::greater4_odd_even} implies the conclusion.
\end{proof}

\begin{proof}[Proof of Theorem \ref{thm::boundary_arm_hat}]
We have the following observations.
\begin{itemize}
\item By Remark \ref{rem::hat_initial}, we know that (\ref{eqn::boundary_arm_hat_odd}) holds for $n=1$.
\item By the same arguments in Section \ref{subsec::greater4_even_odd}, we could prove that, assume (\ref{eqn::boundary_arm_hat_odd}) holds for $2n-1$ with $n\ge 1$, then (\ref{eqn::boundary_arm_hat_even}) holds for $2n$ where (\ref{eqn::from2nto2n+1}) should be replaced by
\[\hat{\alpha}_{2n}^+=u_1(\hat{\alpha}_{2n-1}^+)+\hat{\alpha}_{2n-1}^+.\]
\item By the same arguments in Section \ref{subsec::greater4_odd_even}, we could prove that, assume (\ref{eqn::boundary_arm_hat_even}) holds for $2n$ with $n\ge 1$, then (\ref{eqn::boundary_arm_hat_odd}) holds for $2n+1$ where (\ref{eqn::from2n-1to2n}) should be replaced by
\[\hat{\alpha}_{2n+1}^+=u_2(\hat{\alpha}_{2n}^+)+\hat{\alpha}_{2n}^+.\]
\end{itemize}
Combining these three facts, we obtain the conclusion.
\end{proof}

\section{Boundary Arm Exponents for $\kappa\le 4$}\label{sec::boundary_less4}
\subsection{Definitions and Statements}
In this section, we assume $\kappa\in(0,4]$, let $\eta$ be a chordal SLE$_\kappa$ curve, and let $g_t$ be the corresponding Loewner maps. Since $\eta$ does not hit the boundary other than its end points, $H_n$ and $\hat H_n$ defined in Section \ref{sec::introduction} are empty sets. So we need to modify their definitions.

For $y\in\R$ and $r>0$, we define half strips:
$$L_{y;r}^-=\{z\in\HH: \Im z\le r; \Re z\le y\},\quad L_{y;r}^+=\{z\in\HH: \Im z\le r; \Re z\ge y\};$$
and write $L_y^\pm=L_{y;\pi}^\pm$.

A crosscut in a domain $D$ is an open simple curve in $D$, whose end points approach boundary points of $D$. Suppose $S$ is a relatively closed subset of $\HH$ such that $\pa S\cap \HH$ is a crosscut of $\HH$. Then we use $\pa_{\HH}^+ S$ (resp.\ $\pa_{\HH}^-S$) to denote the curve $\pa S \cap \HH$ oriented so that $S$ lies to the left (resp.\ right) of the curve. For example, $\pa_{\HH}^- L_{y;r}^-$ is from $y$ to $\infty$; and for $x\in\R$, $\pa_{\HH}^+ B(x,r)$ is from $x-r$ to $x+r$.

Let $\xi_j:[0,T_j]\to\C$, $j=-1,1$, and $\eta:[0,T)\to\C$ be three continuous curves. For $j=-1,1$, define increasing functions $R_j(t)=\max(\{0\}\cup\{s\in[0,T_j]:\xi_{j}(s)\in\eta([0,t])\})$ for $t\in[0,T)$. Let $\tau_0=0$. After $\tau_n$ is defined for some $n\ge 0$, we define $\tau_{n+1}=\inf\{t\ge \tau_n:\eta(t)\in \xi_{(-1)^{n+1}}((R_{(-1)^{n+1}}(\tau_n),T_{(-1)^{n+1}}))\}$, where we set $\inf\emptyset=\infty$ by convention, and if any $\tau_{n_0}=\infty$, then $\tau_n=\infty$ for all $n\ge n_0$.

\begin{definition}
  If $\tau_{n_0}<\infty$ for some $n_0\in\N$, then we say that $\eta$ makes (at least) $n_0$ well-oriented $(\xi_{-1},\xi_1)$-crossings.
\end{definition}

\begin{remark}
  The above name comes from the fact that the orientation-preserving reparametrizations of $\xi_1,\xi_{-1},\eta$ do not affect the event.
\end{remark}

\begin{definition} Let $x>y$, $x>0$, and $\eps>0$. Let $\eta$ be an SLE$_\kappa$ in $\HH$ from $0$ to $\infty$.
 Define $H_{2n-1}^\pi(\eps, x, y)$ to be the event that $\eta$ makes at least $(2n-1)$ well-oriented $(\pa_{\HH}^+B({x,\eps}),\pa_{\HH}^- L_y^-)$-crossings.
Define $H_{2n}^\pi (\eps, x, y)$ to be the event that $\eta$ makes at least $2n$ well-oriented $(\pa_{\HH}^- L_y^-,\pa_{\HH}^+B({x,\eps}))$-crossings.  Note that in either event, the last visit that counts is at the half circle $\pa_{\HH}^+B({x,\eps})$.
\end{definition}

The theorem below is our main theorem for $\kappa\le 4$. The function $\phi$ will be defined later in (\ref{phi}), and $\phi^{(k)}$ is the $k$ times iteration of $\phi$. 
The following estimate is useful to have a sense of $\phi^{(k)}$:
\BGE \phi^{(k)}(x)\ge \frac x2,\quad\mbox{if }x\ge 6k+3.\label{phi^k>}\EDE

\begin{theorem}\label{thm::boundary_arm_alternative}
Let $\alpha_{2n}^+$ and $\alpha_{2n-1}^+$ be defined by (\ref{eqn::boundary_arm_formula_less8}). We have the following facts.

(i) If $(\eps,x,y)$ satisfy $2^{5n-4}\eps< \phi^{(2n-2)}(x-y)$, then
\BGE \PP\left[H_{2n-1}^\pi(\eps, x, y)\right]\lesssim \frac{x^{\alpha_{2n-2}^+-\alpha_{2n-1}^+}\eps^{\alpha_{2n-1}^+}}{\prod_{j=1}^{n-1}\phi^{(2n-2j-1)} (x-y)^{\alpha_{2j}^+-\alpha_{2j-2}^+}}.\label{eqn::boundary_arm_rho_odd_upper}\EDE
If $(\eps,x,y)$ satisfy $2^{5n-1}\eps< \phi^{(2n-1)}(x-y)$, and $\eps\le x$, then
\BGE \PP\left[H_{2n}^\pi(\eps, x, y)\right]\lesssim \frac{   x^{\alpha_{2n}^+-\alpha_{2n-1}^+}\eps^{\alpha_{2n-1}^+}}
{\prod_{j=1}^{n}\phi^{(2n-2j)} (x-y)^{\alpha_{2j}^+-\alpha_{2j-2}^+}}.\label{eqn::boundary_arm_rho_even_upper}\EDE
Here the implicit constants depend only on $\kappa,n$.

(ii) For any $R>0$ and $n\in\N$, there is a constant $C_{n,R}$ depending only on $\kappa, n,R$ such that
\begin{align}
\PP\left[H_{2n-1}^\pi(\eps, x, y)\right]\ge C_{2n-1,R} {x^{\alpha_{2n-2}^+-\alpha_{2n-1}^+}\eps^{\alpha_{2n-1}^+}} , &\quad \text{provided } \eps<x,\text {and }\eps<x-y\le R,\label{eqn::boundary_arm_rho_odd_lower}\\
\PP\left[H_{2n}^\pi(\eps, x, y)\right]\ge C_{2n,R}{ x^{\alpha_{2n}^+-\alpha_{2n-1}^+}\eps^{\alpha_{2n-1}^+}},
&\quad \text{provided } \eps<x\le x-y\le R.\label{eqn::boundary_arm_rho_even_lower}
\end{align}
\end{theorem}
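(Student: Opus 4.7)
The plan is to proceed by induction on $n$, closely mirroring the scheme used in Sections \ref{subsec::greater4_odd_even} and \ref{subsec::greater4_even_odd} for $\kappa>4$, but with the half-strip $L_y^-$ replacing the half-infinite line $(-\infty,y)$. The base case $n=1$ of (\ref{eqn::boundary_arm_rho_odd_upper}) and (\ref{eqn::boundary_arm_rho_odd_lower}) follows directly from Proposition \ref{prop::sle_boundary_estimate} (with $\lambda=b=0$), which gives the sharp estimate $\PP[\hat\tau_\eps<\infty]\asymp (\eps/x)^{u_1(0)}$ and $u_1(0)=\alpha_1^+=(8-\kappa)/\kappa$ for $\kappa\le 4$.

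For the odd-to-even step, I would stop $\eta$ at the first time $\sigma$ it enters the strip $L_y^-$ (the first hitting of $\pa_{\HH}^-L_y^-$), apply the centered map $f=g_\sigma-W_\sigma$, and let $\tilde\eta$ be the image of $\eta[\sigma,\infty)$ under $f$. Then $\tilde\eta$ is an $\SLE_\kappa$, Lemma \ref{lem::extremallength_argument} controls the image of $\pa_{\HH}^+B(x,\eps)$, and on the event $\sigma<T$ the event $H_{2n}^\pi(\eps,x,y)$ is contained in $\tilde H_{2n-1}^\pi$ on the image configuration, with the relevant ``half-strip width'' on the target side given by a conformal image of $L_y^-$. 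Taking expectation and invoking Lemma \ref{lem::estimate_derivative_even} with the identity $u_2(\alpha_{2n-1}^+)=\alpha_{2n}^+-\alpha_{2n-1}^+$ then produces the extra factor and yields (\ref{eqn::boundary_arm_rho_even_upper}). For the even-to-odd step, I would stop at the analogue of $\hat\tau_{C\eps}$ used in Section \ref{subsec::greater4_even_odd}, apply $g_{\hat\tau}-W_{\hat\tau}$, use Koebe's $1/4$ theorem and Lemma \ref{lem::image_insideball} to confine the image of the half-disk, and then invoke Proposition \ref{prop::sle_boundary_estimate} via (\ref{eqn::from2nto2n+1}), i.e.\ $\alpha_{2n+1}^+=u_1(\alpha_{2n}^+)+\alpha_{2n}^+$, to derive (\ref{eqn::boundary_arm_rho_odd_upper}).

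The genuinely new ingredient compared to the $\kappa>4$ case is the iterated function $\phi^{(k)}$ appearing in the denominators. Because $\eta$ does not touch $\R$, the ``distance'' $x-y$ that appeared naturally in Theorem \ref{thm::boundary_arm_greater4} must be replaced by a quantity that tracks how the extremal distance between the image of the half-strip and the half-circle degrades under each successive Loewner map. I expect $\phi$ to be defined so that if, before the $k$-th crossing, the relevant separation is $d$, then after one crossing it becomes at least $\phi(d)$; the estimate (\ref{phi^k>}) then guarantees that when the initial separation $x-y$ is large enough relative to $2^{5n}\eps$, the iterated separation $\phi^{(k)}(x-y)$ stays comparable to $x-y$, which is what is needed to keep applying the inductive derivative estimate at each stage. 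The telescoping products in the denominators emerge by multiplying the single-step factors $(x-y)^{\alpha_{2j}^+-\alpha_{2j-2}^+}$ across the layers of the induction.

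For the lower bounds (\ref{eqn::boundary_arm_rho_odd_lower}) and (\ref{eqn::boundary_arm_rho_even_lower}), restricted to the regime $x-y\le R$, I would use a localized construction: force the first crossings to occur inside a compact region of diameter $\asymp R$ via a positive-probability event (absolutely continuous perturbations of $\SLE$ preserving the driving Brownian motion on compact time intervals), then apply the inductive hypothesis together with the derivative estimates to recover the remaining power of $\eps$. The main obstacle I anticipate is the bookkeeping around $\phi^{(k)}$: making precise, at each induction step, that the conformal image of the shrinking residual strip $L_y^-$ has an effective ``height'' comparable to $\phi$ applied to the previous one, and verifying that the hypothesis $2^{5n-4}\eps<\phi^{(2n-2)}(x-y)$ is preserved (with the correct constants) when passing from the outer configuration to the image configuration after stopping. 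Once this geometric control is in place, the martingale/derivative machinery carries over essentially verbatim from Section \ref{sec::boundary_greater4}.
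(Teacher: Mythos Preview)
Your overall scheme matches the paper's: induction via the domain Markov property, the same local martingales, and the identities $u_2(\alpha_{2n-1}^+)=\alpha_{2n}^+-\alpha_{2n-1}^+$ and $\alpha_{2n+1}^+=u_1(\alpha_{2n}^+)+\alpha_{2n}^+$. Your reading of $\phi$ is also correct---the paper shows (Lemma \ref{g(x0-y0)}) that after one application of $g_K$ a separation $x_0-y_0$ between the target point and the residual half-strip becomes at least $\phi(x_0-y_0)$, and the telescoping denominators arise exactly as you say.

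There are, however, two places where the $\kappa\le 4$ argument does \emph{not} carry over verbatim and your proposal would stall. First, you invoke Lemma \ref{lem::estimate_derivative_even}, but that lemma is stated for $\kappa>4$ and its stopping time is the hitting time of $(-\infty,y]\subset\R$, which is infinite when $\kappa\le 4$. Here $\sigma$ is the hitting time of $\pa_\HH^- L_y^-$; since $\eta(\sigma)\notin\R$ one no longer has $W_\sigma=g_\sigma(y)$, and the monotonicity of $g_t(x)-g_t(y)$ is unavailable. The paper replaces it by a harmonic-measure comparison (the right side of $\eta[0,\sigma]$ disconnects $[\Re\eta(\sigma),0]$ together with the right side of the segment $[\Re\eta(\sigma),\eta(\sigma)]$) to obtain $g_\sigma(x)-W_\sigma\ge x-y$ directly (inequality (\ref{g-W})); only then does the martingale change of measure yield the correct power. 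Second, the assertion ``$H_{2n}^\pi$ is contained in $\tilde H_{2n-1}^\pi$ on the image configuration'' is not automatic for \emph{well-oriented} crossings: after $g_\sigma$, the images of $\pa_\HH^+B(x,\eps)$ and of the residual piece of $\pa_\HH^-L_y^-$ are neither a half-circle nor a half-strip boundary. The paper supplies a Comparison Principle (Lemma \ref{lem::comparison}) asserting that replacing the two crosscuts by ones that separate them from each other can only increase the crossing count, together with Lemma \ref{lem:L-lower} (the $g_K$-image of the unbounded piece of $L^-_{y;r}$ contains a half-strip of height $r/2$) and a short harmonic-measure bound $x_1-W_\sigma\le 24\eps$ after stopping at $\tau_{6\eps}$ in the even-to-odd step. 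These are the pieces that make the induction rigorous; once they are in place the rest of your plan goes through.
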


\begin{remark}
  Using (\ref{phi^k>}), we see that, if $x-y\ge 12n$ and $2^{5n}\eps<x-y$, then
  $$\PP\left[H_{2n-1}^\pi(\eps, x, y)\right]\lesssim \left(\frac{x}{x-y}\right)^{\alpha_{2n-2}^+}\left(\frac{\eps}{x}\right)^{\alpha_{2n-1}^+}$$
  and
 $$ \PP[H_{2n}^\pi(\eps, x, y)]\lesssim \left(\frac{x}{x-y}\right)^{\alpha_{2n}^+}\left(\frac{\eps}{x}\right)^{\alpha_{2n-1}^+}.$$
 So we get the same upper bound as in the case $\kappa>4$.
\end{remark}

\subsection{Comparison principle for well-oriented crossings} \label{subsection::5}

Let $D$ be a simply connected domain. We say that $\eta:[0,T)\to\lin D$ is a non-self-crossing curve in $D$ if $\eta(0)\in\pa D$, and for any $t_0\ge 0$, there is a unique connected component $D_{t_0}$ of $D\sem \eta[0,t_0]$ such that
$\eta(t_0+\cdot)$  is the image of a continuous curve in $\lin\U$ under a continuous map from $\lin\U$ onto $\lin{D_{t_0}}$, which is an extension of a conformal map from $\U$ onto $D_{t_0}$. For example, an SLE curve is almost surely a non-self-crossing curve.

\begin{figure}[ht!]
\begin{center}
\includegraphics[width=0.55\textwidth]{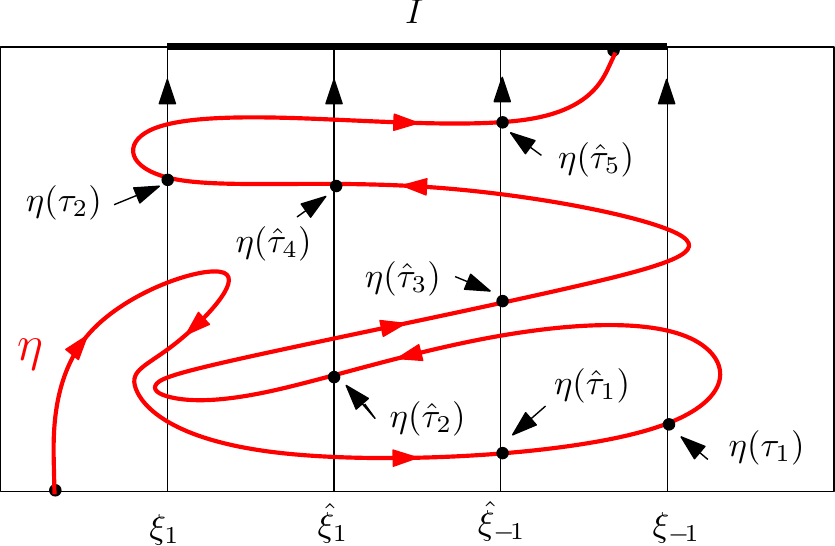}
\end{center}
\caption{\label{fig::well_oriented} The figure illustrates the definition of well-oriented crossings as well as the conditions of Lemma \ref{lem::comparison}. The curve $\eta$ totally makes $2$ well-oriented $(\xi_{-1},\xi_1)$-crossings and $5$ well-oriented $(\hat \xi_{-1}, \hat \xi_1)$-crossings. The times $\tau_j$, $1\le j\le 2$, and $\hat\tau_j$, $1\le j\le 5$, are indicated in the figure.}
\end{figure}

\begin{lemma} [Comparison Principle]
Let $D$ be a simply connected domain, and $\eta$ be a non-self-crossing curve in $D$. Let $\xi_j,\hat \xi_j:(0,1)\to \lin D$, $j=-1,1$, be crosscuts of $D$. Let $(\tau_n)$ and $R_j(t)$, $j=-1,1$   be  as in the definition of oriented crossings  for $\eta$ and $(\xi_{-1},\xi_1)$. Let $(\hat \tau_n)$ and $\hat R_j(t)$, $j=-1,1$, be the corresponding quantities for $\eta$ and $(\hat \xi_{-1},\hat \xi_1)$.
 Assume the following. See Figure \ref{fig::well_oriented}.
 \begin{enumerate}
   \item [(i)] For $j=-1,1$, $\hat\xi_j$ disconnects $\xi_j$ from both $\xi_{-j}$ and $\hat\xi_{-j}$ in $D$; the distance between $\hat\xi_{-1}$ and $\hat\xi_1$ is positive; and $\hat \xi_{-1}$ disconnects $\xi_{-1}$ from $\eta(0)$ in $D$. Here we allow the possibility that $\hat\xi_j$ touches $\xi_j$, or $\eta(0)\in\hat \xi_{-1}$.
   \item [(ii)] If $\eta_{t_0}=\hat \xi_{(-1)^{n+1}}(\hat R_{(-1)^{n+1}}(\tau_n))$ or $\hat \xi_{(-1)^{n+1}}(1)$ for some $t_0\ge \tau_n$, then for any $\eps>0$, there is $t_1\in[t_0,t_0+\eps)$ such that $\eta(t_1)\in \hat \xi_{(-1)^{n+1}}((\hat R_{(-1)^{n+1}}(\tau_n),1))$.
   \item [(iii)] There is a closed boundary (prime end) arc $I$ of $D$ with end points $\xi_1(1)$ and $ \xi_{-1}(1)$ such that $\hat \xi_j(1)\in I$, $j=-1,1$, and $\eta\cap I=\emptyset$.
 \end{enumerate}
If $\eta$ makes $n_0$ well-oriented $(\xi_{-1},\xi_1)$-crossings, then it also makes $n_0$ well-oriented $(\hat \xi_{-1}, \hat \xi_1)$-crossings. \label{lem::comparison}
\end{lemma}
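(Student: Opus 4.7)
The plan is to induct on $n$, proving the stronger statement $\hat\tau_n\le \tau_n$ for all $n\ge 0$; the lemma then follows immediately since $\tau_{n_0}<\infty$ forces $\hat\tau_{n_0}<\infty$. The case $n=0$ is trivial. For $n=1$, $\tau_1$ is the first time $\eta$ reaches $\xi_{-1}$, and condition (i) states that $\hat\xi_{-1}$ disconnects $\xi_{-1}$ from $\eta(0)$ in $D$; by continuity $\eta$ must touch $\hat\xi_{-1}$ no later than $\tau_1$, and (invoking condition (ii) in the degenerate case $\eta(0)\in\hat\xi_{-1}$) this contact is at a new point of $\hat\xi_{-1}$, giving $\hat\tau_1\le\tau_1$.

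For the inductive step, I would assume $\hat\tau_n\le \tau_n<\infty$ and $\tau_{n+1}<\infty$, and write $j=(-1)^{n+2}$ so that $\eta(\tau_n)\in\xi_{-j}$ and $\eta(\tau_{n+1})$ is a new-point visit to $\xi_j$. The goal is to produce $t\in[\tau_n,\tau_{n+1}]$ with $\eta(t)\in\hat\xi_j((\hat R_j(\tau_n),1))$. Condition (i) says that $\hat\xi_j$ separates $\xi_{-j}$ from $\xi_j$ in $D$, so $\eta|_{[\tau_n,\tau_{n+1}]}$ must cross $\hat\xi_j$ by continuity; the difficulty is to rule out the case that this crossing occurs entirely at points already visited in $\hat\xi_j[0,\hat R_j(\tau_n)]$.

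My strategy is to work inside the tip component $D_{\tau_n}$ of $D\setminus\eta[0,\tau_n]$ supplied by the non-self-crossing property of $\eta$. First I would check that the visited portion of $\hat\xi_j$ up to time $\tau_n$ is a connected prefix $\hat\xi_j[0,\hat R_j(\tau_n)]$, using the inductive hypothesis together with condition (iii) (the arc $I$ is disjoint from $\eta$, so the ``far'' endpoint $\hat\xi_j(1)\in I$ remains accessible from $D_{\tau_n}$). Next, the unvisited arc $\hat\xi_j((\hat R_j(\tau_n),1))$ together with a sub-arc of $I$ would form a crosscut-like separator of $D_{\tau_n}$ that still disconnects the accessible parts of $\xi_{-j}$ and $\xi_j$. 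Hence any continuous sub-arc of $\eta$ going from $\xi_{-j}$ to a new point of $\xi_j$ within $\overline{D_{\tau_n}}$ must meet this separator; since $\eta\cap I=\emptyset$, the crossing lands in $\hat\xi_j((\hat R_j(\tau_n),1))$ itself, which is exactly the desired new-point visit. Condition (ii) is then invoked to handle delicate tangential cases in which $\eta$ touches $\hat\xi_j$ precisely at $\hat\xi_j(\hat R_j(\tau_n))$ or at $\hat\xi_j(1)$: it guarantees that such a contact is immediately followed by a visit to $\hat\xi_j((\hat R_j(\tau_n),1))$.

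The main obstacle I expect is the careful topological bookkeeping that establishes the ``connected prefix'' structure of the visited portion of $\hat\xi_j$ and the separation property within $D_{\tau_n}$. This is precisely where the positive-distance assumption between $\hat\xi_{-1}$ and $\hat\xi_1$, together with conditions (ii) and (iii), are used; once these are in place, the remainder of the argument is a direct application of connectedness and the intermediate value theorem along $\eta$.
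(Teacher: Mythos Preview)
Your overall plan --- induct on $n$ to prove $\hat\tau_n\le\tau_n$, and at the inductive step work inside the tip component $D_{\tau_n}$ to show that the remaining arc of $\hat\xi_j$ still separates $\eta(\tau_n)$ from the new target on $\xi_j$ --- is exactly the paper's argument. Two points need fixing.

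First, a notational slip: with $\eta(\tau_n)\in\xi_{(-1)^n}$ and $\eta(\tau_{n+1})$ a new visit to $\xi_{(-1)^{n+1}}$, the target index is $j=(-1)^{n+1}$, not $(-1)^{n+2}$.

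Second, and more substantively, your ``connected prefix'' step is both false in general and unnecessary. The curve $\eta$ may hit $\hat\xi_j$ at scattered parameters, so the visited set need not be $\hat\xi_j[0,\hat R_j(\tau_n)]$. What you actually need is only that the \emph{unvisited suffix} $\hat\xi_j((\hat R_j(\tau_n),1))$ is a crosscut of $D_{\tau_n}$, and this follows directly: by definition of $\hat R_j$ the arc lies in $D\setminus\eta[0,\tau_n]$; one endpoint tends to $\hat\xi_j(1)\in I$, and assumption (iii) together with the fact that $\xi_j((R_j(\tau_n),1))$ already lands in $D_{\tau_n}$ with endpoint $\xi_j(1)\in I$ forces $I\subset\partial D_{\tau_n}$; the other endpoint tends either to $\partial D$ (if $\hat R_j(\tau_n)=0$) or to a point of $\eta[0,\tau_n]$. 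There is no need to adjoin a sub-arc of $I$ to build the separator. Once this crosscut disconnects $\xi_j((R_j(\tau_n),1))$ from $\eta(\tau_n)$ in $D_{\tau_n}$, assumption (ii) yields a hit on the open arc no later than $\tau_{n+1}$, and then monotonicity of $\hat R_j$ combined with $\hat\tau_n\le\tau_n$ converts this into $\hat\tau_{n+1}\le\tau_{n+1}$ (this last bookkeeping step, passing from $\hat R_j(\tau_n)$ to $\hat R_j(\hat\tau_n)$, is also missing from your sketch).
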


\begin{remark}
  The assumption that $\eta$ is non-self-crossing forces $\eta(\tau_n+\cdot)$ to stay in the closure of the remaining domain $D_{\tau_n}$. We need assumption (iii) to prevent $\eta(\tau_n+\cdot)$ to sneak into the region bounded by the  crosscut $\hat \xi_{(-1)^{n+1}}((\hat R_{(-1)^{n+1}}(\tau_n),1))$ of $D_{\tau_n}$ through one of its endpoints without hitting the crosscut. This assumption is certainly satisfied if $\eta$ is an SLE curve.
\end{remark}

\begin{proof} Suppose $\eta$ makes $n_0$ well-oriented $( \xi_{-1}, \xi_1)$-crossings. Then $  \tau_{n_0}<\infty$. We will show that $\hat \tau_n\le \tau_n$ for $0\le n\le n_0$. Especially, the inequality $\hat \tau_{n_0}<\infty$ is what we need.

First, we have $\tau_0=\hat\tau_0=\hat R_{-1}(0)=0$. From assumptions (i) and (ii), we have
$$\hat \tau_1=\inf\{t\ge 0: \eta(t)\in \hat \xi_{-1}((0,1))\}\le \inf\{t\ge 0: \eta(t)\in  \xi_{-1}((0,1))\} = \tau_1.$$

Suppose we have proved that $\hat\tau_n\le \tau_n$ for some $n\in\{1,\dots,n_0-1\}$. Then $\eta( \tau_n)\in   \xi_{(-1)^n}$, and for every $\eps>0$, there is $t\in[ \tau_{n+1}, \tau_{n+1}+\eps)$ such that $\eta(t)\in   \xi_{(-1)^{n+1}}((  R_{(-1)^{n+1}}(  \tau_n),1))$.
Let $D_{ \tau_n}$ be the connected component of $D\sem \eta([0, \tau_n])$ such that $\eta[ \tau_n,\infty)\subset \lin{D_{ \tau_n}}$. Then $  \xi_{(-1)^{n+1}}((R_{(-1)^{n+1}}( \tau_n),1))$ is a crosscut of $D_{ \tau_n}$ since it belongs to $D\sem \eta([0, \tau_n])$ and is visited by $\eta$ after $\tau_n$. From assumption (iii) we know that $\hat \xi_{(-1)^{n+1}}((\hat R_{(-1)^{n+1}}( \tau_n),1))$ is also a crosscut of $D_{ \tau_n}$. Since $D_{\tau_n}$ is simply connected, this crosscut disconnects $ \xi_{(-1)^{n+1}}((R_{(-1)^{n+1}}(  \tau_n),1))$ from $\eta_{\tau_n}$ in $D_{\hat\tau_n}$.
From assumption (ii), we have
$$\inf\{t\ge  \tau_n: \eta(t)\in \hat \xi_{(-1)^{n+1}}((\hat R_{(-1)^{n+1}}(  \tau_n),1))\}\le \inf\{t\ge  \tau_n: \eta(t)\in  \xi_{(-1)^{n+1}}((\hat R_{(-1)^{n+1}}(  \tau_n),1))\}= \tau_{n+1}.$$
Since $\hat\tau_n\le \tau_n$ and $\hat R_{(-1)^{n+1}}(t)$ is increasing, we get $\hat R_{(-1)^{n+1}}(  \hat \tau_n)\le \hat R_{(-1)^{n+1}}(  \tau_n)$, and so
$$\hat\tau_{n+1}=\inf\{t\ge  \tau_n: \eta(t)\in \hat \xi_{(-1)^{n+1}}((\hat R_{(-1)^{n+1}}( \hat \tau_n),1))\}\le \inf\{t\ge  \tau_n: \eta(t)\in \hat \xi_{(-1)^{n+1}}((\hat R_{(-1)^{n+1}}(  \tau_n),1))\}\le  \tau_{n+1}.$$
By induction, we conclude that $\hat\tau_n\le \tau_n$ for all $0\le n\le n_0$, as desired.
\end{proof}

\begin{remark}
  The lemma also holds if we do not assume that $\xi_{-1}$ and $\hat \xi_{-1}$ are crosscuts of $D$, but assume that they are the same curve in $\lin D$.
\end{remark}

\subsection{Estimates on half strips} \label{subsection::1}

Given a nonempty $\HH$-hull $K$, Let $a_K=\min(\lin K\cap\R)$ and $b_K=\max(\lin K\cap\R)$. Let $K^{\doub}=K\cup[a_K,b_K]\cup\{\lin z: z\in K\}$. By Schwarz reflection principle, $g_K$ extends to a conformal map from $\C\sem K^{\doub}$ onto $\C\sem[c_K,d_K]$ for some $c_K<d_K\in\R$, and satisfies $g_K(\lin z)=\lin{g_K(z)}$. From  \cite[(5.1)]{ZhanLERW} we know that there is a positive measure $\mu_K$ supported by $[c_K,d_K]$ with total mass $|\mu_K|=\hcap(K)$ such that,
  \BGE f_K(z)-z=\int \frac{-1}{z-x}d\mu_K(x),\quad z\in\C\sem[c_K,d_K].\label{f-z}\EDE
  
For $x_0\in\R$ and $r>0$, let $\lin B^+(x_0,r)$ denote the special $\HH$-hull $\lin {B(x_0,r)}\cap \HH$. If an $\HH$-hull $K$ is contained in $\lin B^+(x_0,r)$, then $\hcap(K)\le \hcap(\lin B^+(x_0,r))=r^2$ by the monotonicity of half-plane capacity, and $[c_K,d_K]\subset [c_{\lin B^+(x_0,r)},d_{\lin B^+(x_0,r)}]=[x_0-2r,x_0+2r]$ by \cite[Lemma 5.3]{ZhanLERW}.

\begin{lemma}
  Let $x_0,y\in\R$ and $R,r>0$. Suppose $K$ is an $\HH$-hull and $K\subset \lin B^+_{x_0,R}$. Then the unbounded connected component of $g_K(L^-_{y;r}\sem K)$ contains $L^-_{y';r'}$ for $y'=\min\{x_0-2R-\frac{2R^2}{r},y-\frac r2\}$ and $r'=r/2$. \label{lem:L-lower}
\end{lemma}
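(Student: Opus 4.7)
The plan is to use the integral representation of $f_K=g_K^{-1}$ in (\ref{f-z}) to show that $|f_K(w)-w|\le r/2$ uniformly on $L^-_{y';r'}$, and then translate this uniform closeness into the set-theoretic containment.

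First I would record the geometric input we are given: since $K\subset \lin B^+(x_0,R)$, the hull's support interval satisfies $[c_K,d_K]\subset[x_0-2R,x_0+2R]$ and the measure in (\ref{f-z}) has total mass $|\mu_K|=\hcap(K)\le R^2$. Next, for any $w\in L^-_{y';r'}$ we have $\Re w\le y'\le x_0-2R-2R^2/r$, so for every $x\in[c_K,d_K]$,
\[
|w-x|\ge -\Re(w-x)\ge (x_0-2R)-\Re w\ge \frac{2R^2}{r}.
\]
Plugging this and the capacity bound into (\ref{f-z}) gives
\[
|f_K(w)-w|\;\le\;\int\frac{1}{|w-x|}\,d\mu_K(x)\;\le\;\frac{r}{2R^2}\cdot R^2\;=\;\frac{r}{2}.
\]

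Next I would use this pointwise bound to show $f_K(L^-_{y';r'})\subset L^-_{y;r}\setminus K$. Since $\Im w\le r'=r/2$, we get $\Im f_K(w)\le \Im w+r/2\le r$; since $\Re w\le y'\le y-r/2$, we get $\Re f_K(w)\le \Re w+r/2\le y$; and of course $f_K(w)\in\HH\setminus K$ because $f_K$ maps $\HH$ onto $\HH\setminus K$. Thus each $f_K(w)$ lies in $L^-_{y;r}\setminus K$.

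Finally, I would argue that the image lies in the correct (unbounded) connected component. The set $L^-_{y';r'}$ is connected and $f_K$ is continuous, so $f_K(L^-_{y';r'})$ is connected; moreover $f_K(w)\to\infty$ as $w\to\infty$ inside the half strip, so $f_K(L^-_{y';r'})$ meets the unbounded component of $L^-_{y;r}\setminus K$, and therefore lies entirely in it. Applying $g_K$ then shows $L^-_{y';r'}=g_K(f_K(L^-_{y';r'}))$ is contained in the unbounded component of $g_K(L^-_{y;r}\setminus K)$, as desired. There is no serious obstacle here: the whole point of the definition of $y'$ and $r'$ is to leave a safety margin of exactly $r/2$ in each coordinate, which is precisely what the capacity-based estimate on $|f_K-\id|$ delivers; the only minor care needed is the trivial case $K=\emptyset$, where $g_K=\id$ and the containment is immediate since $y'\le y-r/2$ and $r'=r/2$.
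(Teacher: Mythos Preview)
Your proof is correct and follows essentially the same approach as the paper: both use the integral representation (\ref{f-z}) together with the bounds $[c_K,d_K]\subset[x_0-2R,x_0+2R]$ and $|\mu_K|\le R^2$ to show $|f_K(w)-w|\le r/2$ on $L^-_{y';r'}$, deduce $f_K(L^-_{y';r'})\subset L^-_{y;r}\setminus K$, and then invert. Your write-up is slightly more explicit about the unbounded-component step and the trivial case $K=\emptyset$, but there is no substantive difference.
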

\begin{proof}
  Let $z\in L^-_{y';r'}$. Since $\Re z\le x_0-2R-\frac{2R^2}{r}$ and $[c_K,d_K]\subset[x_0-2R,x_0+2R]$, we have $|z-x|\ge \frac{2R^2}r$ for any $x\in [c_K,d_K]$. From (\ref{f-z}) and $|\mu_K|=\hcap(K)\le R^2$, we get $|f_K(z)-z|\le \frac r2$. Since $\Re z\le y'\le y-\frac r2$, we get $\Re f_K(z)\le y$. Since $0<\Im z\le r'=r/2$, we get $0<\Im f_K(z)\le r$ ($f_K$ maps $\HH$ into $\HH$). Thus, we conclude that $f_K(L^-_{y';r'})\subset L^-_{y;r}$. Since $f_K(L^-_{y';r'})$ is an unbounded domain contained in $\HH\sem K$, and $g_K=f_K^{-1}$, we get the conclusion.
\end{proof}

Now $L^-_{y;r}$ is not an $\HH$-hull since it is not bounded. But we will still find a conformal map from $\HH$ onto $\HH\sem L^-_{y;r}$. By scaling and translation, it suffices to consider $L^-_0=L^-_{0;\pi}$. We will use the map $f_{(0,i]}(z)=\sqrt{z^2-1}$ for the half open  line segment $(0,i]$, and the map $f_{\lin B^+(0,1)}$ for the unit semi-disc. Recall that $f_{\lin B^+(0,1)}^{-1}(z)=g_{\lin B^+(0,1)}(z)=z+\frac 1z$.

\begin{lemma}
  Let $f_{L^-_0}(z)=f_{(0,i]}(z)+\log(f_{\lin B^+(0,1)}(2z))$, where the branch of $\log$ is chosen so that it maps $\HH$ onto $\{0<\Im z<\pi\}$. Then $f_{L^-_0}$ maps $\HH$ conformally onto $\HH\sem L^-_0$, and satisfies $f_{L^-_0}(z)=z+\log(2z)+O(1/z)$ as $z\to\infty$, and $ f_{L^-_0}(1)=0$, $ f_{L^-_0}(-1)=\pi i$. \label{fL}
\end{lemma}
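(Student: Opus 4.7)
The plan is to verify the three claims by direct computation and a boundary-correspondence argument. The asymptotic follows from Taylor expansions at infinity: $\sqrt{z^2-1} = z - 1/(2z) + O(1/z^3)$ gives $f_{(0,i]}(z) = z + O(1/z)$, and $f_{\lin B^+(0,1)}(2z) = z + \sqrt{z^2-1} = 2z - 1/(2z) + O(1/z^3)$, so $\log f_{\lin B^+(0,1)}(2z) = \log(2z) + \log(1 - 1/(4z^2) + O(1/z^4)) = \log(2z) + O(1/z^2)$. Summing yields $f_{L^-_0}(z) = z + \log(2z) + O(1/z)$. The boundary values follow by substitution: $f_{(0,i]}(\pm 1) = 0$ and $f_{\lin B^+(0,1)}(\pm 2) = \pm 1$, so with the specified log branch, $f_{L^-_0}(1) = \log 1 = 0$ and $f_{L^-_0}(-1) = \log(-1) = i\pi$.

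For the conformality claim, first note that $f_{L^-_0}$ is holomorphic on $\HH$ since $f_{\lin B^+(0,1)}(2z) \in \HH$ for $z \in \HH$, making the prescribed log branch well-defined. I would identify the image by tracking the continuous extension of $f_{L^-_0}$ to $\R$ on three arcs. On $(1,\infty)$, both $\sqrt{x^2-1}$ and $x + \sqrt{x^2-1}$ are positive real, and a derivative calculation gives $(f_{L^-_0})'(x) = (x+1)/\sqrt{x^2-1} > 0$, so $f_{L^-_0}$ maps $(1,\infty)$ strictly monotonically onto $(0,\infty)$. On $(-1,1)$, parametrizing $x = \cos\theta$ with $\theta \in (0,\pi)$ gives $f_{(0,i]}(\cos\theta) = i\sin\theta$ and $f_{\lin B^+(0,1)}(2\cos\theta) = e^{i\theta}$, so $f_{L^-_0}(\cos\theta) = i(\sin\theta + \theta)$, tracing the imaginary segment $[0,i\pi]$ monotonically since $d/d\theta(\sin\theta + \theta) = \cos\theta + 1 \ge 0$. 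On $(-\infty,-1)$, the normalization $\sqrt{z^2-1} \sim z$ at infinity forces $\sqrt{x^2-1}$ to equal $-u$ with $u>0$ the usual square root, so $x + \sqrt{x^2-1} = x - u < -1$ is real negative; the log branch then gives $\log(x-u) = \log(u-x) + i\pi$, and another derivative check confirms that $\Re f_{L^-_0}(x) = -u + \log(u-x)$ is strictly monotonic on $(-\infty,-1)$, with limits $0$ as $x \to -1^-$ and $-\infty$ as $x \to -\infty$.

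Together these show that $f_{L^-_0}$ extends continuously to a homeomorphism of $\R \cup \{\infty\}$ onto the Jordan curve $\pa(\HH \setminus L^-_0)$ in $\widehat{\C}$, covering the positive real axis, the vertical segment $[0,i\pi]$, the horizontal ray $(-\infty,0] + i\pi$, and closing up through $\infty$. Combined with the asymptotic $f_{L^-_0}(z) \sim z + \log(2z)$ at infinity, the argument principle applied to a large semicircle in $\HH$ shows that every point of $\HH \setminus L^-_0$ has exactly one preimage, yielding the desired conformal bijection. The main technical care will be in consistently tracking the branch of $\sqrt{z^2-1}$ on $(-\infty,-1)$ dictated by the normalization at infinity, and in verifying strict monotonicity on each boundary arc so that the boundary traversal is injective at all three corners.
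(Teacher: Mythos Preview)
Your proof is correct. The asymptotic and the boundary-value computations at $\pm 1$ are essentially identical to the paper's. For the conformality, however, you take a different route. The paper computes the derivative directly,
\[
f_{L^-_0}'(z)=\frac{z}{\sqrt{z^2-1}}+\frac{1}{\sqrt{z^2-1}}=\sqrt{\frac{z+1}{z-1}},
\]
and then invokes the Schwarz--Christoffel formula: this derivative has the correct prevertices and turning exponents for the polygonal domain $\HH\setminus L^-_0$, and together with $f_{L^-_0}((1,\infty))\subset(0,\infty)$ and the known values $f_{L^-_0}(\pm1)$, the map is identified. Your argument instead tracks the boundary explicitly on the three arcs $(1,\infty)$, $(-1,1)$, $(-\infty,-1)$, checks monotonicity on each, and then applies the argument principle. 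Your approach is more hands-on and self-contained (no appeal to Schwarz--Christoffel), at the cost of the extra branch-tracking and monotonicity checks; the paper's approach is shorter once one is willing to cite Schwarz--Christoffel. Note that your derivative computation $(f_{L^-_0})'(x)=(x+1)/\sqrt{x^2-1}$ on $(1,\infty)$ is exactly the paper's $\sqrt{(x+1)/(x-1)}$, so the two arguments are closer than they first appear.
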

\begin{proof}
  We observe that   $z\mapsto \log(f_{\lin B^+(0,1)}(2z))$ is a conformal map from $\HH$ onto $L^+_0$, which takes $1$ and $-1$ to $0$ and $\pi i$ respectively; and $f_{(0,i]}$ is a conformal map from $\HH$ onto $\HH\sem (0,i]$, which takes both $1$ and $-1$ to $0$. So the $f_{L^-_0}$ defined by the lemma satisfies $ f_{L^-_0}(1)=0$,$ f_{L^-_0}(-1)=\pi i$. As $z\to \infty$, $f_{(0,i]}(z)=z+O(1/z)$ and $f_{\lin B^+(0,1)}(2z)=2z+O(1/z)$. So $\log(f_{\lin B^+(0,1)}(2z))=\log(2z)+O(1/z^2)$ as $z\to \infty$. Thus, $f_{L^-_0}(z)=z+\log(2z)+O(1/z)$ as $z\to \infty$.

  It remains to show that $f_{L^-_0}$ maps $\HH$ conformally onto $\HH\sem L^-_0$. It is easy to see that $f_{L^-_0}$ maps $(1,\infty)$ into $(0,\infty)$. By Schwarz-Christoffel transformation, it suffices to show that $f_{L^-_0}'(z)=\sqrt{\frac{z+1}{z-1}}$. Let $g(z)=g_{\lin B^+(0,1)}(z)/2=\frac{z}2+\frac 1{2z}$ and $f=g^{-1}$. Then $\log(f_{\lin B^+(0,1)}(2z))=\log(f(z))$. We find that $\sqrt{g(z)^2-1}=\frac{z}2-\frac 1{2z}$ and $g'(z)=\frac 12-\frac 1{2z^2}$. So $\sqrt{g(z)^2-1}=z g'(z) =\frac{f(g(z))}{f'(g(z))}$, which implies that $\frac{f'(w)}{f(w)}=\sqrt{\frac{1}{w^2-1}}$. From this we get $\frac d{dz} \log(f_{\lin B^+(0,1)}(2z))=\frac{f'(z)}{f(z)}=\frac{1}{\sqrt{z^2-1}}$. Since  $f_{(0,i]}'(z)=\frac{z}{\sqrt{z^2-1}}$, we have $f_{L^-_0}'(z)=\frac{z}{\sqrt{z^2-1}}+\frac{1}{\sqrt{z^2-1}}=\sqrt{\frac{z+1}{z-1}}$, as desired.
\end{proof}

Define $f_{L^-_y}(z)=f_{L^-_0}( z-y)+y$, which maps $\HH$ conformally onto $\HH\sem L^-_y$, and let $g_{L^-_{y}}=f_{L^-_{y}}^{-1}$.
We will use $\hm(z,D;V)$ to denote the harmonic measure of $V$ in a domain $D$ seen from $z$, i.e., the probability that a planar Brownian motion started from $z\in D$ hits $V$ before $\pa D\sem V$.

\begin{lemma}
  For any $y,m\in\R$, and any boundary arc $I\subset \pa (\HH\sem L^-_{y})$, we have $\lim_{h\to\infty} h\cdot \hm(m+ih, \HH\sem L^-_{y};I)=|g_{L^-_{y}}(I)|/\pi$, where  $|\cdot|$ is the Lebesgue measure on $\R$.
  \label{hm-infty}
\end{lemma}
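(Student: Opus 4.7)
The strategy is to transport the problem to the upper half plane through $g_{L_y^-}$ and then use the explicit Poisson kernel formula. By conformal invariance of harmonic measure,
\[
\hm(m+ih,\HH\sem L_y^-;I)=\hm(z_h,\HH;g_{L_y^-}(I)),\qquad z_h:=g_{L_y^-}(m+ih),
\]
and the half-plane Poisson kernel yields, with $V:=g_{L_y^-}(I)$,
\[
h\cdot\hm(z_h,\HH;V)=\frac{1}{\pi}\int_V \frac{h\,\Imm z_h}{(x-\Ree z_h)^2+(\Imm z_h)^2}\,dx.
\]
So the task reduces to (a) locating $z_h$ precisely as $h\to\infty$, and (b) passing to the limit inside the integral.

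For (a), I would use the expansion from Lemma \ref{fL}, which after translation by $y$ gives
\[
f_{L_y^-}(z)=z+\log(2(z-y))+O(1/z),\qquad z\to\infty \text{ in }\HH.
\]
Since $f_{L_y^-}(z_h)=m+ih$ and $z_h\to\infty$, a one-step Picard inversion is enough: $z_h=m+ih-\log(2(z_h-y))+O(1/z_h)$, and plugging in the crude first-order approximation $z_h\approx ih$ gives $\log(2(z_h-y))=\log(2h)+i\pi/2+O(1/h)$. Hence
\[
z_h=\bigl(m-\log(2h)\bigr)+i\bigl(h-\pi/2\bigr)+o(1),
\]
so in particular $\Imm z_h/h\to 1$ and $\Ree z_h=O(\log h)=o(h)$. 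A brief argument (contraction on $\{|z-(m+ih)|\le 2\log h\}$, say) makes this rigorous.

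For (b), set $K_h(x):=h\,\Imm z_h/[(x-\Ree z_h)^2+(\Imm z_h)^2]$. The asymptotics from (a) give $K_h(x)\to 1$ pointwise on $\R$, and the uniform bound $K_h(x)\le h/\Imm z_h=1+o(1)$. If $|V|<\infty$, dominated convergence gives $\int_V K_h\to|V|$ and we are done. If $|V|=\infty$, applying the same dominated convergence to $V\cap[-R,R]$ and then letting $R\to\infty$ shows $\liminf_h h\cdot\hm(z_h,\HH;V)\ge |V\cap[-R,R]|/\pi\to\infty$, matching the stated right-hand side $|V|/\pi=\infty$.

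\textbf{Main obstacle.} The only delicate point is the inversion step: one must argue that the error $O(1/z)$ in the expansion of $f_{L_y^-}$ is genuinely uniform in the relevant region of $\HH$ (it is, since the proof of Lemma \ref{fL} shows it comes from $f_{(0,i]}$ and $f_{\lin B^+(0,1)}$, both of which satisfy a global $|f_K(z)-z|=O(1/|z|)$ bound for $|z|$ bounded away from the $\HH$-hulls), and then verify that the self-consistent equation for $z_h$ has a unique solution with the claimed asymptotics. Everything else is a routine Poisson-kernel computation.
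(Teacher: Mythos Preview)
Your proof is correct and shares the paper's opening move: apply conformal invariance to get $\hm(m+ih,\HH\sem L_y^-;I)=\hm(z_h,\HH;g_{L_y^-}(I))$ with $z_h=g_{L_y^-}(m+ih)$, then analyse the right-hand side via the half-plane Poisson kernel.

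The difference lies in how much information about $z_h$ is used. You compute the sharp expansion $z_h=(m-\log(2h))+i(h-\pi/2)+o(1)$ and then run dominated convergence on the Poisson integrand; the inversion/contraction step you flag as the ``main obstacle'' is exactly the cost of this precision. The paper instead uses only the crude fact $|g_{L_y^-}(z)-z|=o(|z|)$ (immediate from $|f_{L_y^-}(z)-z|=O(\log|z|)$), which already implies $\hm(z_h,\HH;V)\big/\hm(m+ih,\HH;V)\to 1$, and then invokes the elementary limit $h\cdot\hm(m+ih,\HH;V)\to|V|/\pi$. This sidesteps the Picard iteration entirely: no precise location of $z_h$ is needed, only that it stays within $o(h)$ of $m+ih$. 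Your route gives more information about $z_h$ than the lemma requires, while the paper's route is shorter and avoids the step you identified as delicate.
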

\begin{proof}
  From conformal invariance of the harmonic measure, we have $$\hm(m+ih, \HH\sem L^-_{y};I)=\hm(g_{L^-_{y}}(m+ih),\HH;g_{L^-_{y}}(I).$$
  Since $|f_{L^-_{y}}(z)-z|/|z|\to 0$ as $|z|\to\infty$, we get $|g_{L^-_{y}}(z)-z|/|z|\to 0$ as $|z|\to\infty$. From this we get $$\lim_{h\to \infty}\hm(g_{L^-_{y}}(m+ih),\HH;g_{L^-_{y}}(I))/\hm(m+ih,\HH;g_{L^-_{y}}(I))=1.$$ Since $\lim_{h\to\infty} h\cdot \hm(m+ih,\HH;g_{L^-_{y}}(I))=|g_{L^-_{y}}(I)|/\pi$,  the proof is now finished.
\end{proof}

We will use $\hm(\infty,\HH\sem L^-_{y}; I)$ to denote  $\lim_{h\to\infty}\pi\cdot h\cdot \hm(m+ih, \HH\sem L^-_{y};I)$, which equals $|g_{L^-_{y}}(I)| $ by the above lemma. For example, we have $\hm(\infty,\HH\sem L^-_y; [y,y+i\pi])=2$, and $$\hm(\infty,\HH\sem L^-_y; [y,y'])=g_{L^-_y}(y')-g_{L^-_y}(y)=g_{L^-_0}(y'-y)-1,\quad y'\ge y.$$
Note that $x\mapsto f_{L^-_0}(g_{L^-_0}(x)-2)$ is a homeomorphism from $[f_{L^-_0}(3),\infty)$ onto $[0,\infty)$. Now we
define
\BGE  \phi (x)= \left\{\begin{array}{ll} f_{L^-_0}(g_{L^-_0}(x)-2 ),  & \mbox{if }x\ge f_{L^-_0}(3) ;\\
  0, &\mbox{if } x\le f_{L^-_0}(3).
\end{array}\right. \label{phi}
 \EDE

\begin{lemma}
  Let $x_0,y_0\in\R$. Let $K$ be an $\HH$-hull such that $x_0>b_K=\max(\lin K\cap\R)$. Let $\gamma$ denote the unbounded component of $\pa L^-_{y_0}\sem (\R\cup K)$. If $x_0-y_0> f_{L^-_0}(3)$, then there is $y_1\in\R$ such that $g_K(\gamma)\subset L^-_{y_1}$ and $g_K(x_0)-y_1\ge  \phi(x_0-y_0)$. \label{g(x0-y0)}
\end{lemma}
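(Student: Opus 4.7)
The plan is to introduce the simply-connected domain $D := \HH \setminus (K \cup L^-_{y_0})$ and let $h : D \to \HH \setminus L^-_{y_1}$ be the unique conformal map sending the crosscut $\gamma$ to the top edge of $L^-_{y_1}$ and the right edge of $L^-_{y_0}$ to the right edge of $L^-_{y_1}$, normalized by $h(z) = z + o(1)$ as $z \to +\infty$ (with the residual freedom pinned down by insisting $h(y_0) = y_1$, using the identity $f_{L^-_0}(1) = 0$). This uniquely determines $y_1$. The inclusion $g_K(\gamma) \subset \lin{L^-_{y_1}}$ then decomposes into an imaginary-part bound $\Im g_K(z) \le \Im z \le \pi$ (by Loewner monotonicity of imaginary parts) and a real-part bound $\Re g_K(z) \le y_1$. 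For the latter I would consider $\eta := h \circ g_K^{-1} : \HH \setminus g_K(L^-_{y_0} \setminus K) \to \HH \setminus L^-_{y_1}$, which is conformal with $\eta(w) = w + o(1)$ at $+\infty$; matching the boundary structure under this normalization forces $g_K(L^-_{y_0} \setminus K) \subseteq L^-_{y_1}$, and hence $g_K(\gamma) \subset \lin{L^-_{y_1}}$.

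For the quantitative bound $g_K(x_0) - y_1 \ge \phi(x_0 - y_0)$, I would apply $g_{L^-_0}$ (which is increasing) to translate the inequality via the definition $\phi(x) = f_{L^-_0}(g_{L^-_0}(x) - 2)$ into
\[
g_{L^-_0}(g_K(x_0) - y_1) \ge g_{L^-_0}(x_0 - y_0) - 2.
\]
Using $\hm(\infty, \HH \setminus L^-_y; [y, w]) = g_{L^-_0}(w - y) - 1$ from Lemma \ref{hm-infty}, this becomes the harmonic-measure inequality
\[
\hm(\infty, \HH \setminus L^-_{y_1}; [y_1, g_K(x_0)]) \ge \hm(\infty, \HH \setminus L^-_{y_0}; [y_0, x_0]) - 2,
\]
where the constant $2$ on the right is precisely $\hm(\infty, \HH \setminus L^-_{y_0}; [y_0, y_0 + i\pi])$, the harmonic measure from infinity of the right edge of $L^-_{y_0}$. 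By conformal invariance under $h$, the LHS equals $\hm(\infty, D; h^{-1}([y_1, g_K(x_0)]))$, and the $-2$ on the RHS is absorbed by the contribution of the right edge of $L^-_{y_0}$ that gets identified with part of the real axis under $h$. The two sides then compare via standard boundary arc inclusions in $D$.

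The main obstacle is verifying the correspondence between the conformal map $h$ (which determines $y_1$) and the hull map $g_K$ (which determines $g_K(x_0)$) in the generic case where $K$ meets $L^-_{y_0}$ or the interval $[y_0, x_0]$. Pinning down $y_1$ from the normalization at $+\infty$ is delicate because both source and target domains have a half-strip that contributes a logarithmic term to the asymptotics, and these $\log$ corrections must be tracked carefully; the identity $h(y_0) = y_1$ together with $f_{L^-_0}(1) = 0$ removes this ambiguity. I would handle the generic configuration by first proving the identity in the clean case $K \cap L^-_{y_0} = \emptyset$, where one has the explicit decomposition $h = f_{L^-_{y_1}} \circ T_\delta \circ g_{K'} \circ g_{L^-_{y_0}}$ with $K' := g_{L^-_{y_0}}(K)$ and $T_\delta$ a real shift determined by matching the $\log$ asymptotics, and then extending to the general case by a Carathéodory approximation of $K$ by a sequence of hulls $K_n$ disjoint from $L^-_{y_0}$, exploiting continuity of $g_{K_n}$, $h_n$, and harmonic measure along this approximation.
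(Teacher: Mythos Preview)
Your approach introduces substantial unnecessary machinery, and the key step has a genuine gap. The paper's proof is far more direct: it simply \emph{defines} $y_1 := \sup \Re(g_K(\gamma))$. With this choice the inclusion $g_K(\gamma)\subset L^-_{y_1}$ is immediate, since $g_K$ decreases imaginary parts (from the integral representation (\ref{f-z})). There is no need for any auxiliary conformal map $h$, no normalization at $+\infty$ to track, and no approximation argument for the case $K\cap L^-_{y_0}\ne\emptyset$.

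Your construction of $h$ is where the gap lies. You want $h:D\to\HH\setminus L^-_{y_1}$ with a prescribed boundary correspondence and the normalization $h(z)=z+o(1)$ as $z\to+\infty$, and you assert this determines $y_1$ uniquely. But even in the clean case you have not shown that such an $h$ exists with \emph{that} normalization: both domains carry a logarithmic correction at infinity coming from their half-strips, and you yourself note this is ``delicate''. More seriously, the inference ``matching the boundary structure under this normalization forces $g_K(L^-_{y_0}\setminus K)\subseteq L^-_{y_1}$'' is asserted without argument; the map $\eta=h\circ g_K^{-1}$ is conformal between two simply connected domains, but nothing in the normalization alone forces one image set to contain the other. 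Finally, your $y_1$ (if it exists) is a priori different from the paper's $y_1$, and you would still have to show it is at least as large, which brings you back to a harmonic-measure comparison anyway.

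For the quantitative bound, the paper's argument is close in spirit to what you sketch but is executed with a concrete separating crosscut rather than through $h$. One picks $z_1\in\overline{g_K(\gamma)}$ realizing $\Re z_1=y_1$, lets $\gamma_v=(y_1,z_1)$ be the vertical segment, and observes that $f_K(\gamma_v)\cup f_K([y_1,x_1])$ disconnects $[y_0,x_0]$ from $\infty$ in $\HH\setminus L^-_{y_0}$ (after first ruling out $x_1\le y_1$ by a similar contradiction). Comparing harmonic measures from $\infty$ on both sides and using $\hm(\infty,\HH\setminus L^-_{y_1};[y_1,y_1+i\pi])=2$ gives exactly $g_{L^-_0}(x_0-y_0)-1\le 2 + g_{L^-_0}(x_1-y_1)-1$, which is the inequality you were aiming for. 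The crosscut $\gamma_v$ replaces your map $h$ entirely.
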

\begin{proof}
Let $L$ be the unbounded component of $  L^-_{y_0}\sem K$.  Let $y_1=\sup \Re (g_K(\gamma))$. From (\ref{f-z}) we see that $g_K=f_K^{-1}$ decreases the imaginary part of points in $\HH$. So we have $g_K(\gamma)\subset  L^-_{y_1}$. 

Let  $x_1=g_K(x_0)$. First, we prove that $x_1> y_1$. Choose $z_1\in\lin{g_K(\gamma)}$ such that $y_1=\Re z_1$.
 Suppose $x_1\le y_1$. Then $z_1\not\in\R$ for otherwise $z_1$ is the image of $\lin{\gamma}\cap \pa K$ under $g_K$, which must lie to the left of the image of $x_0$.
  Let $\gamma_v$ denote the vertical open line segment $(y_1,z_1)$. It disconnects $x_1$ from $\infty$ in  $\HH\sem g_K(L)$. Thus, $f_K(\gamma_v)$ is a crosscut in $\HH\sem(K\cup L)$, which connects $f_K(z_1)\in\gamma$ with $f_K(y_1)\ge x_0$, and separates $x_0=f_K(x_1)$ from $\infty$ in $\HH\sem(K\cup L)$.  Then for big $h>0$,
  \BGE \hm(ih,\HH\sem (K\cup L);f_K(\gamma_v))=\hm(ih,\HH\sem L;f_K(\gamma_v))
  \ge \hm(ih,\HH\sem L^-_{y_0};f_K(\gamma_v))\ge \hm(ih,\HH\sem L^-_{y_0};[y_0,x_0]).\label{[y_0x_0]}\EDE
  Here the equality holds because $f_K(\gamma_v)$ disconnects $K$ from $\infty$ in $\HH\sem L$ (here we use the fact that $L$ is the unbounded component of $L^-_{y_0}\sem K$);  the first inequality holds because $\HH\sem L^-_{y_0}\subset \HH\sem L$; and the second inequality holds because $f_K(\gamma_v)$ disconnects $[y_0,x_0]$ from $\infty$ in $\HH\sem L^-_{y_0}$.

  From conformal invariance of harmonic measure, $\HH\sem g_K(L)\supset \HH\sem L^-_{y_1}$, and $\gamma_v\subset[y_1,y_1+i\pi]$, we have
  $$\hm(ih,\HH\sem (K\cup L);f_K(\gamma_v))=\hm(g_K(ih),\HH\sem g_K(L);\gamma_v)\le \hm(g_K(ih),\HH\sem L^-_{y_1};[y_1,y_1+i\pi]).$$
  Thus, $$ \hm(ih,\HH\sem L^-_{y_0};[y_0,x_0])\le \hm(g_K(ih),\HH\sem L^-_{y_1};[y_1,y_1+i\pi]).$$
  Combining the above inequalities with (\ref{[y_0x_0]}) and letting $h\to\infty$, we get
$$\hm(\infty, \HH\sem L^-_{y_0};[y_0,x_0])\le  \hm(\infty,\HH\sem L^-_{y_1};[y_1,y_1+i\pi]).$$
Then we get $g_{L^-_0}(x_0-y_0)-1\le 2$, which contradicts that $x_0-y_0> f_{L^-_0}(3)$. Thus, $g_K(x_0)=x_1>y_1$.

Finally, since $f_K([y_1,z_1]\cup [y_1,x_1])$ disconnects $K$ from $\infty$ in $\HH\sem L$, and disconnects $[y_0,x_0]$ from $\infty$ in $\HH\sem L^-_{y_0}$, we get
$$ \hm(\infty, \HH\sem L^-_{y_0};[y_0,x_0])\le  \hm(\infty,\HH\sem L^-_{y_1};[y_1,y_1+i\pi]\cup [y_1,x_1]),$$
which implies that $g_{L^-_0}(x_0-y_0)-1\le 2+g_{L^-_0}(x_1-y_1)-1$. So the proof is finished.
\end{proof}

Let $K_t$, $0\le t\le t_0$, be chordal Loewner hulls driven by $W_t$, $0\le t\le t_0$. Recall that every $K_t$ is an $\HH$-hull with $\hcap(K_t)=2t$. From (\ref{loewner}) it is easy to see that 
\BGE \sup\{\Re z:z\in K_{t_0}\}\le \max\{W_t:0\le t\le t_0\},\quad \sup\{\Im z:z\in K_{t_0}\}\le \sqrt{4t_0}. \label{L}\EDE
From \cite[Theorem 2.6]{LawlerSchrammWernerExponent1} and \cite[Lemma 5.3]{ZhanLERW}, we know that 
\BGE W_t\in [c_{K_{t_0}},d_{K_{t_0}}],\quad 0\le t\le t_0.\label{Win}\EDE

\begin{lemma}
  Let $R=L^-_y\cap L^+_x$ for some $x<y\in\R$. Then $c_R\ge x-2$.
\end{lemma}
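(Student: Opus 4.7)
The plan is to combine the integral representation of $f_R$ from \eqref{f-z} with the explicit conformal map of the half-strip $L^+_x$ provided by Lemma~\ref{fL}. Since $R=[x,y]\times(0,\pi]$ is a bounded $\HH$-hull whose double $R^{\doub}=[x,y]\times[-\pi,\pi]$ has leftmost real point $x$, the Schwarz-reflected map $f_R$ sends $c_R$ to $x$, and \eqref{f-z} yields
\[ x-c_R=\int_{[c_R,d_R]}\frac{d\mu_R(u)}{u-c_R}. \]
The task reduces to controlling this integral.

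I would obtain the bound by a harmonic-measure comparison between $R$ and the surrounding half-strip $L^+_x$, whose inner left wall $\{x\}\times(0,\pi]$ coincides with the left wall of $R$. Applying Lemma~\ref{fL} together with the reflection $z\mapsto -\ov z$ produces the uniformizer $f_{L^+_0}$ with $f_{L^+_0}(-1)=0$ and $f_{L^+_0}(1)=i\pi$, so $g_{L^+_0}$ sends this inner wall onto the interval $[-1,1]$ of Lebesgue length exactly $2$. By translation and Lemma~\ref{hm-infty}, $\hm(\infty,\HH\sem L^+_x;\{x\}\times(0,\pi])=2$. Using the containment $R\subset L^+_x$, together with the monotonicity of the $[c,d]$-interval under hull containment \cite[Lemma~5.3]{ZhanLERW}, I would transfer this harmonic-measure value to a bound on the sub-interval of $[c_R,d_R]$ that is the image of the left wall of $R$ under $g_R$, and then to the bound on $x-c_R$.

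The main obstacle is making this comparison rigorous for the unbounded hull $L^+_x$, since \cite[Lemma~5.3]{ZhanLERW} is stated for bounded $\HH$-hulls. The natural fix is to approximate $L^+_x$ by the bounded hulls $\widetilde K_n:=L^+_x\cap\ov{B^+(x,n)}$, each of which contains $R$ once $n\ge y-x+\pi$; apply the bounded-hull comparison to each $\widetilde K_n$; and then pass to the limit using Carath\'eodory convergence of $g_{\widetilde K_n}$ to the uniformizer of $L^+_x$. The delicate point is that $\hcap(\widetilde K_n)\to\infty$, so $[c_{\widetilde K_n},d_{\widetilde K_n}]$ does not stay bounded; one must separate the contribution of the ``left-wall preimage'' in the $[c,d]$-interval (which stabilizes to the universal value from Lemma~\ref{fL}) from the diverging contribution coming from the capacity. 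Alternatively, one can bypass the limit entirely by a direct Schwarz--Christoffel analysis of $R$: normalizing by the center of the rectangle one has $f_R'(z)=\sqrt{(z^2-b^2)/(z^2-a^2)}$ for parameters $a>b>0$ satisfying the elliptic-integral identities
\[ \int_b^a\sqrt{\tfrac{t^2-b^2}{a^2-t^2}}\,dt=\pi,\qquad \int_{-b}^b\sqrt{\tfrac{b^2-t^2}{a^2-t^2}}\,dt=y-x, \]
and then reading off the desired relation between $a,b$ and the half-width by elementary estimation, which translates directly into the claim about $c_R$.
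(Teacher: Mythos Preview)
Your approach shares the paper's core idea: compare the harmonic measure of the left wall $[x,x+i\pi]$ in $\HH\sem R$ with that in $\HH\sem L^+_x$, using $R\subset L^+_x$, and read off the value $2$ from the explicit map of the half-strip. The paper additionally uses the symmetry $g_R(m+i\pi)=m$ and a separate comparison with the half-plane $\{\Im z>\pi\}$ to locate $g_R(x+i\pi)$ relative to $x$; you try to bypass this via the integral representation $x-c_R=\int(u-c_R)^{-1}\,d\mu_R(u)$, but you never explain how a bound on the length $g_R(x+i\pi)-c_R$ of the image of the left wall would control that integral. The two quantities $g_R(x+i\pi)-c_R$ and $x-c_R$ are genuinely different, and bridging them is precisely the role of the paper's first comparison, which your proposal omits.

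More seriously, the key harmonic-measure comparison goes the wrong way, in your sketch and in the paper's printed proof alike. Since $\HH\sem L^+_x\subset\HH\sem R$ and $[x,x+i\pi]$ lies on the boundary of both domains, the maximum principle gives
\[
\hm(z,\HH\sem L^+_x;[x,x+i\pi])\le \hm(z,\HH\sem R;[x,x+i\pi]),
\]
and hence $g_R(x+i\pi)-c_R\ge 2$, not $\le 2$. Indeed the statement with constant $2$ cannot hold: the vertical slit $K=[x,x+i\pi]$ is contained in $R$, and $g_K(z)=x+\sqrt{(z-x)^2+\pi^2}$ gives $c_K=x-\pi$; by the very $[c,d]$-monotonicity you invoke, $c_R\le c_K=x-\pi<x-2$ for every rectangle $R=L^-_y\cap L^+_x$. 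So any valid argument must target $c_R\ge x-\pi$ (which suffices for the downstream application after replacing $2$ by $\pi$), and neither the comparison with $L^+_x$ nor the Schwarz--Christoffel sketch as written can produce the constant~$2$.
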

\begin{proof}
    Let $m=(x+y)/2$. Then $R$ is symmetric w.r.t.\ $\{\Re z=m\}$. So $g_R(m+i\pi)=m$. By conformal invariance and comparison principle of harmonic measures, for any $h>\pi$, we get
\begin{align*}
  &h\cdot\hm(g_R(m+ih),\HH;[g_R(x+i\pi),m])=h\cdot\hm(m+ih,\HH\sem R; [x+i\pi,m+i\pi])\\
  \le & h\cdot \hm(m+ih,\{\Im z>\pi\}; [x+i\pi,m+i\pi])=h\cdot \hm(m+i(h-\pi ),\HH;[x,m]).
\end{align*}
Letting $h\to\infty$, we get $m-g_R(x+i\pi)\le m-x$, and so $g_R(x+i\pi)\ge  x$. Similarly,
$$h\cdot \hm(g_R(m+ih),\HH;[g_R(x),g_R(x+i\pi)])=h\cdot \hm(m+ih,\HH\sem R; [x,x+i\pi])\le h\cdot \hm(m+ih, \HH\sem L^+_x;[x,x+i\pi]).$$
Letting $h\to \infty$, and using Lemma \ref{hm-infty} (applied to right half strips) and $(g_R(m+ih)-(m+ih))/h\to 1$ as $h\to \infty$, we get $g_R(x+i\pi)-g_R(x)\le 2$. Thus, $c_R=g_R(x)\ge g_R(x+i\pi)-2\ge x-2$.
\end{proof}

\begin{lemma} Let $t_0=\pi^2/4$.
  We have $K_{t_0}\cap L^-_y\ne\emptyset$ if $y>\min\{W_t:0\le t\le t_0\}+2$. \label{intersect_L}
\end{lemma}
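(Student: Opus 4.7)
The plan is to argue by contradiction: assume $K_{t_0}\cap L^-_y=\emptyset$ and deduce $\min_{t\le t_0}W_t\ge y-2$, which contradicts the hypothesis $y>\min_t W_t+2$. Three ingredients will carry the argument: (i) the size bounds (\ref{L}) on $K_{t_0}$; (ii) the preceding lemma, which says $c_{L^-_{y'}\cap L^+_{x'}}\ge x'-2$ whenever $x'<y'$; and (iii) the inclusion $W_t\in[c_{K_{t_0}},d_{K_{t_0}}]$ from (\ref{Win}), together with the standard monotonicity $K\subset K'\Rightarrow[c_K,d_K]\subset[c_{K'},d_{K'}]$ \cite[Lemma 5.3]{ZhanLERW}.

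Setting $M:=\max_{t\le t_0}W_t$ and noting that $\sqrt{4t_0}=\pi$, (\ref{L}) localizes $K_{t_0}\cap\HH$ inside $(-\infty,M]\times(0,\pi]$, and the contradiction hypothesis further restricts it to the rectangle $(y,M]\times(0,\pi]$. The only subtle step is ruling out that $K_{t_0}$ extends along $\R$ to the left of $y$, since such an extension would drag $c_{K_{t_0}}$ down. For this I will use the definition $K_{t_0}=\lin{\{z\in\HH:T_z\le t_0\}}$: every real-line point of $K_{t_0}$ is approached by $\HH$-points of $K_{t_0}$, so if some $x_0\in K_{t_0}\cap\R$ had $x_0<y$, a sequence $z_n\in K_{t_0}\cap\HH$ with $\Re z_n\to x_0$ and $\Im z_n\to 0$ would eventually lie in $L^-_y$, contradicting the assumption. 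Hence $K_{t_0}\cap\R\subset[y,M]$, and altogether $K_{t_0}\subset\lin R:=[y,M]\times[0,\pi]$.

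The set $\lin R$ is an $\HH$-hull whose doubling equals $R^{\doub}$ for $R:=L^-_M\cap L^+_y$, so $c_{\lin R}=c_R$. Applying the preceding lemma with $(x',y')=(y,M)$ gives $c_R\ge y-2$, and monotonicity of $[c_K,d_K]$ under hull inclusion yields $c_{K_{t_0}}\ge c_{\lin R}\ge y-2$. Combining with (\ref{Win}) produces $\min_{t\le t_0}W_t\ge c_{K_{t_0}}\ge y-2$, the desired contradiction. (The potentially degenerate case $M\le y$, in which the rectangle above is empty, is handled by the same argument once one observes that $K_{t_0}\cap\HH\ne\emptyset$ for $t_0>0$, so this case immediately contradicts $K_{t_0}\cap L^-_y=\emptyset$.) The only step of real substance is the control of $K_{t_0}\cap\R$ in the second paragraph; everything else is a direct assembly of the three inputs above.
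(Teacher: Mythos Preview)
Your proof is correct and follows essentially the same approach as the paper: assume $K_{t_0}\cap L^-_y=\emptyset$, trap $K_{t_0}$ in the rectangle $R=L^+_y\cap L^-_r$ using (\ref{L}), invoke the preceding lemma to get $c_R\ge y-2$, and combine monotonicity of $[c_K,d_K]$ with (\ref{Win}) to reach a contradiction. Your added care about $K_{t_0}\cap\R$ (via the closure definition of $K_{t_0}$) and the degenerate case $M\le y$ are harmless extra precision; the paper's version simply writes $K_{t_0}\subset R$ directly and appeals to \cite[Lemma 5.3]{ZhanLERW} without isolating these points.
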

\begin{proof}
  Let $l=\min\{W_t:0\le t\le t_0\}$ and $r=\max\{W_t:0\le t\le t_0\}$. From (\ref{L}), we know that $K_{t_0}\subset L^-_r$. Suppose $K_{t_0}\cap L^-_y=\emptyset$ for some $y>l+2$. Then $K_{t_0}\subset R:=L^+_y\cap L^-_r$.  From \cite[Lemma 5.3]{ZhanLERW}, we get $[c_{K_{t_0}},d_{K_{t_0}}]\subset [c_R,d_R]$.
From the above lemma, we get $c_{K_{t_0}}\ge c_R \ge y-2>l$, which contradicts (\ref{Win}). So the proof is finished.
\end{proof}

The above lemma means that, if $\min\{W_t:0\le t\le \pi^2/4\}<y-2$, and if $(W_t)$ generates a chordal Loewner curve $\eta$, then $\eta$ visits $L^-_y$ before $\frac{\pi^2}4$.

\subsection{Estimate on the derivative}
\begin{proposition}
Assume the same setup as that in Proposition \ref{prop::sle_boundary_estimate} except that (\ref{eqn::requirement_b}) is replaced by
\begin{equation}\label{eqn::requirement_b'}
4  b\ge (\lambda-b)(\kappa\lambda-\kappa b+4-\kappa).
\end{equation}
Let $\tau_\eps$ be the first time that $|\eta(t)-1|\le \eps$. Then we have
\begin{equation}\label{eqn::estimate_derivative'}
\E\left[(g_{ {\tau}_{\eps}}(1)-W_{ {\tau}_{\eps}})^{\lambda-b}g_{ {\tau}_{\eps}}'(1)^{b} 1_{\{ {\tau}_{\eps}<T_0\}}\right]\asymp\eps^{u_1(\lambda)+\lambda-b},
\end{equation}
where the constants in $\asymp$ depend only on $\kappa,\lambda, b$. \label{Prop-Euclidean}
\end{proposition}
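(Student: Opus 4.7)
The plan is to adapt the proof of Proposition~\ref{prop::sle_boundary_estimate} essentially verbatim, with the Euclidean hitting time $\tau_\eps$ in place of the conformal-radius time $\hat\tau_\eps$ and Koebe's $1/4$ theorem in place of the identity $\Upsilon_{\hat\tau_\eps}=\eps$. I would introduce the same local martingale
\[M_t=g'_t(1)^{\nu(\nu+4-\kappa)/(4\kappa)}(g_t(1)-W_t)^{\nu/\kappa},\qquad \nu=-\kappa u_1(\lambda)\le \kappa/2-4,\]
and, using the identity $\lambda=u_1(\lambda)(\kappa u_1(\lambda)+\kappa-8)/4$, factor it as $M_t=(g_t(1)-W_t)^{\lambda-b}g'_t(1)^b\,\Upsilon_t^{-\beta} J_t^\beta$ with $\beta=u_1(\lambda)+\lambda-b$. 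By Lemma~\ref{lem::sle_mart}, weighting $\PP$ by $M$ yields the law $\PP^*$ of $\SLE_\kappa(\nu)$ with force point $1$; since $\nu\le\kappa/2-4$, the weighted curve $\eta^*$ accumulates at $1$ almost surely, so $\PP^*[\tau^*_\eps<T^*_0]=1$.

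Girsanov then gives
\[\E\left[(g_{\tau_\eps}(1)-W_{\tau_\eps})^{\lambda-b}g'_{\tau_\eps}(1)^b\,1_{\{\tau_\eps<T_0\}}\right]=\E^*\left[(\Upsilon^*_{\tau^*_\eps})^{\beta}(J^*_{\tau^*_\eps})^{-\beta}\right].\]
The essential new ingredient is that $\dist(1,K_{\tau^*_\eps})=\eps$ deterministically, so Koebe $1/4$ yields $\Upsilon^*_{\tau^*_\eps}\asymp\eps$ with universal constants. Substituting reduces the problem to showing $\E^*[(J^*_{\tau^*_\eps})^{-\beta}]\asymp 1$ under hypothesis (\ref{eqn::requirement_b'}), in analogy with Lemma~\ref{lem::sle_rho_integrable}.

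A direct It\^o computation shows that (\ref{eqn::requirement_b'}) is precisely the condition that $N_t:=g'_t(1)^b(g_t(1)-W_t)^{\lambda-b}$ is a positive local supermartingale under $\PP$; equivalently, it is the integrability threshold $\beta<1+2u_1(\lambda)-8/\kappa$, which is one unit stricter than the threshold appearing in Proposition~\ref{prop::sle_boundary_estimate}. The main obstacle will be proving this sharpened integrability: at the Euclidean time $\tau^*_\eps$, the diffusion $\hat J_s=J^*_{\sigma^*(s)}$ must be controlled at a random time $s^*=-\log\Upsilon^*_{\tau^*_\eps}/2$ rather than at the deterministic $s_\eps=-\log\eps/2$, and the additional Jacobian cost from this randomness is exactly what forces the one-unit shift. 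I would handle this by coupling $\hat J_{s^*}$ to the stationary $\hat J$-diffusion appearing in the proof of Lemma~\ref{lem::sle_rho_integrable}, then exploiting the supermartingale property of $N_t$ to absorb the extra factor.
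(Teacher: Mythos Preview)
Your Girsanov reduction and the lower bound match the paper exactly: condition (\ref{eqn::requirement_b'}) forces $\beta\ge 0$, so $0<J\le 1$ gives $\E^*[(J^*_{\tau^*_\eps})^{-\beta}]\ge 1$, and Koebe's estimate $\Upsilon_{\tau_\eps}\asymp\eps$ then yields $\E[N_{\tau_\eps}1_{\{\tau_\eps<T_0\}}]\gtrsim\eps^\beta$.

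The gap is in your upper bound. You correctly isolate the difficulty---bounding $\E^*[(J^*_{\tau^*_\eps})^{-\beta}]$ at a \emph{random} time $s^*$ for the time-changed diffusion $\hat J$---but your proposed fix is vague: the coupling in Lemma~\ref{lem::sle_rho_integrable} is carried out only at deterministic $s$, and it is not clear how a $\PP^*$-coupling could be combined with the $\PP$-supermartingale property of $N_t$ across that random time change. The paper bypasses this entirely and never bounds $\E^*[(J^*_{\tau^*_\eps})^{-\beta}]$ from above. It works under $\PP$ throughout: since $N_t=(g_t(1)-W_t)^{\lambda-b}g_t'(1)^b$ is a positive $\PP$-supermartingale by (\ref{eqn::requirement_b'}) and $\hat\tau_\eps\le\tau_\eps$, optional stopping gives
\[\E\big[N_{\tau_\eps}1_{\{\tau_\eps<T_0\}}\big]\le\E\big[N_{\hat\tau_\eps}1_{\{\hat\tau_\eps<T_0\}}\big],\]
and the right-hand side is already $\asymp\eps^\beta$ by Proposition~\ref{prop::sle_boundary_estimate}, since (\ref{eqn::requirement_b'}) implies (\ref{eqn::requirement_b}). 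In other words, the supermartingale observation you made is the entire upper-bound argument---apply it directly under $\PP$, comparing $\tau_\eps$ to $\hat\tau_\eps$, rather than after passing to $\PP^*$.
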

\begin{proof} Let $X_t=(g_{ t}(1)-W_{ t})^{\lambda-b}g_{t}'(1)^{b} 1_{\{ t<T_0\}}$ and $\beta=u_1(\lambda)+\lambda-b$.
First, (\ref{eqn::requirement_b'}) implies (\ref{eqn::requirement_b}) and $\beta\ge 0$. By Proposition \ref{prop::sle_boundary_estimate}, we have
$$ \E\left[X_{\hat\tau(\eps)}1_{\{\hat\tau(\eps)<T_0\}} \right]\asymp\eps^\beta. $$
From (\ref{eqn::requirement_b'}), we straightforwardly check that $X_t$ is a super martingale using It\^o's formula. In fact, if the equality in (\ref{eqn::requirement_b'}) holds, then $X_t$ agrees with the local martingale in Lemma \ref{lem::sle_mart} with $\rho^L=0$, $x^R=1$, and $\rho^R=\kappa(\lambda-b)$. Also note that $g_t'(1)$ is decreasing. Thus, from $\hat\tau_\eps\le \tau_\eps$, we get
$$\E\left[X_{\tau(\eps)}1_{\{\tau(\eps)<T_0\}} \right]\le \E\left[X_{\hat\tau(\eps)}1_{\{\hat\tau(\eps)<T_0\}} \right]\asymp\eps^\beta. $$
To prove the reverse inequality, we
follow the proof of Proposition \ref{prop::sle_boundary_estimate}  to get
$$\E\left[X_{\tau(\eps)}1_{\{\hat\tau(\eps)<T_0\}} \right]\asymp \eps^\beta \E^*[J_{\tau_\eps}^{-\beta}]\ge \eps^\beta,$$
using $\Upsilon_{\tau_\eps}\asymp \eps$, $0<J_t\le 1$ and $\beta\ge 0$.
\end{proof}

\subsection{Proof of Theorem  \ref{thm::boundary_arm_alternative}}

\begin{proof}[Proof of Theorem \ref{thm::boundary_arm_alternative}]
From Remark \ref{rem::sle_boundary_firststep}, we have  (\ref{eqn::boundary_arm_rho_odd_upper}) and (\ref{eqn::boundary_arm_rho_odd_lower}) for $n=1$.

\no {\bf From $2n-1$ to $2n$}: Suppose   (\ref{eqn::boundary_arm_rho_odd_upper}) and (\ref{eqn::boundary_arm_rho_odd_lower}) hold. Let $\sigma$ be the hitting time at $L^-_y$.

\no{\bf upper bound}. If $y\ge 0$, then we use the estimate
$$\PP[H_{2n}^\pi(\eps,x,y)]\le \PP[H_{2n-1}^\pi(\eps,x,y)]
\lesssim \frac{x^{\alpha_{2n-2}^+-\alpha_{2n-1}^+}\eps^{\alpha_{2n-1}^+}}{\prod_{j=1}^{n-1}\phi^{(2n-2j-1)} (x-y)^{\alpha_{2j}^+-\alpha_{2j-2}^+}} \le \frac{  x^{\alpha_{2n}^+-\alpha_{2n-1}^+}\eps^{\alpha_{2n-1}^+}}
{\prod_{j=1}^{n}\phi^{(2n-2j)} (x-y)^{\alpha_{2j}^+-\alpha_{2j-2}^+}},$$
where the last inequality follows from $\phi^{(2n-2j-1)} (x-y)\ge \phi^{(2n-2j)} (x-y)$, $x\ge x-y=\phi^{(0)}(x-y)$, and $\alpha_{2j}^+\ge \alpha_{2j-2}^+$. So we get (\ref{eqn::boundary_arm_rho_even_upper}).

If $y<0$, then $\eta(\sigma)\in\pa_{\HH}^- L^-_y$, and the righthand side of $\eta[0,\sigma]$ disconnects the union of $[\Re \eta(\sigma),0]$ and the righthand side of the line segment $[\Re \eta(\sigma),\eta(\sigma)]$ in $\HH\sem [\Re \eta(\sigma),\eta(\sigma)]$. From the comparison principal and conformal invariance of harmonic measure, we get
\begin{align*}
 & \hm(\infty,\HH\sem \eta[0,\sigma];\mbox{RHS of }\eta[0,\sigma])\ge \hm(\infty,\HH\sem (\eta[0,\sigma]\cup [\Re \eta(\sigma),\eta(\sigma)]);\mbox{RHS of }\eta[0,\sigma])\\
 \ge &  \hm(\infty,\HH\sem  [\Re \eta(\sigma),\eta(\sigma)];[\Re \eta(\sigma),0]\cup \mbox{ RHS of }[\Re \eta(\sigma),\eta(\sigma)]).
\end{align*}
Since $\Re \eta(\sigma)\le y$, we get
\BGE g_\sigma(x)-W_\sigma\ge x-y.\label{g-W}\EDE
The following local martingale is similar to the one used in the proof of Lemma \ref{lem::estimate_derivative_even} (recall (\ref{eqn::from2n-1to2n})):
$$ M_t=|g_t(x+3\eps)-W_t|^{\alpha^+_{2n}-\alpha^+_{2n-1}}g_t'(x+3\eps)^{\alpha^+_{2n-1}} . $$ 
The law of $\eta$ weighted by $M_t/M_0$ is SLE$(\kappa;\nu)$ with force point at $x+3\eps$, where $\nu=\kappa(\alpha^+_{2n}-\alpha^+_{2n-1})$. Let  $\EE^*$ denote the  expectation w.r.t.\ this SLE$(\kappa;\nu)$ process. Let $\eps_1=4(g_\sigma(x+3\eps)-g_\sigma(x+\eps))$,  $x_1=g_\sigma(x+3\eps)$, and $y_1=\sup\{\Re g_\sigma(z):z\in \pa^\sigma_{\HH} L^-_y\}$, where we use $\pa^\sigma_{\HH} L^-_y$ to denote the remaining part of $\pa _{\HH}^-L^-_y$ at time $\sigma$ in the positive direction, i.e., the unbounded component of $\pa _{\HH}^-L^-_y\sem \eta[0,\sigma]$. Then $g_\sigma(\pa^\sigma_{\HH} L^-_y)\subset L^-_{y_1}$. From Lemma \ref{lem::extremallength_argument}, the $g_\sigma$-image of the remaining part of  $\pa_{\HH}^+ B(x,\eps)$ at time $\sigma$ in the positive direction (which touches $x+\eps$), denoted by $\pa_{\HH}^\sigma B(x,\eps)$ is enclosed by $\pa_{\HH}^+ B(x_1,\eps_1)$. From (\ref{g-W}), we get $$\eps_1\le 8\eps \le 2^{5n-1}\eps\le \phi^{(2n-1)}(x-y)\le x-y\le x_1-W_\sigma.$$
This means that $\pa_{\HH}^+ B(x_1,\eps_1)$ disconnects $W_\sigma$ from $g_\sigma(\pa_{\HH}^\sigma B(x,\eps))$.
From Lemma \ref{g(x0-y0)}, we have $x_1-y_1\ge \phi(x-y)\ge 2^4\eps>\eps_1$.
So we may apply Lemma \ref{lem::comparison} and use DMP of SLE to get
$$\PP[H_{2n}^\pi(\eps,x,y)|\eta[0,\sigma]]\le H_{2n-1}^\pi(\eps_1,x_1-W_\sigma,y_1-W_\sigma).$$
We assumed that $(\eps,x,y)$ satisfy $2^{5n-1}\eps<\phi^{(2n-1)}(x-y)$. Since $g_\sigma'\le 1$ on $\R\sem K_\sigma$, we have $\eps_1\le 8\eps$. So we get $$2^{5n-4}\eps_1\le 2^{5n-1}\eps<\phi^{(2n-1)}(x-y)\le \phi^{(2n-2)}(x_1-y_1).$$
This means that $(\eps_1,x_1-W_\sigma,y_1-W_\sigma)$ satisfy the conditions for (\ref{eqn::boundary_arm_rho_odd_upper}). From the induction hypothesis, we get
\begin{align*}
  &\PP[H_{2n-1}^\pi(\eps_1,x_1-W_\sigma,y_1-W_\sigma)]\lesssim f_n(x_1-y_1) (x_1-W_\sigma)^{\alpha^+_{2n-2}-\alpha^+_{2n-1}}\eps_1^{\alpha^+_{2n-1}}\\
  \le  & f_n(x_1-y_1)  (g_\sigma(x+3\eps)-W_\sigma)^{\alpha^+_{2n-2}-\alpha^+_{2n-1}}
(g_\sigma'(x+3\eps)\eps)^{\alpha^+_{2n-1}} ,
\end{align*}
where $f_n(x_1-y_1)$ is the factor coming from the denominator of (\ref{eqn::boundary_arm_rho_odd_upper}), and the last inequality follows from $0<g_\sigma(x+3\eps)-g_\sigma(x+\eps)\le g_\sigma(x+3\eps)-V_\sigma\le 3 g_\sigma'(x+3\eps)\eps$ and $\alpha^+_{2n-1}, \alpha^+_{2n-1}\ge 0$. 
So we get
\begin{align*}
  &\PP[H_{2n}^\pi(\eps,x,y)] =\EE[\PP[H_{2n}^\pi(\eps,x,y)|\eta[0,\sigma]]] \le\EE[H_{2n-1}^\pi(\eps_1,x_1-W_\sigma,y_1-W_\sigma)] \\
  \lesssim &  f_n(x_1-y_1) \eps ^{\alpha^+_{2n-1}} \EE[(g_\sigma(x+3\eps)-W_\sigma)^{\alpha^+_{2n-2}-\alpha^+_{2n-1}}\cdot g_\sigma'(x+3\eps)^{\alpha^+_{2n-1}} ]\\
  \le & f_n\circ \phi (x-y) \eps^{\alpha^+_{2n-1}}M_0 \EE^*[(g_\sigma(x+3\eps)-W_\sigma)^{\alpha^+_{2n-2}-\alpha^+_{2n}}]\\
  \le  & f_n\circ \phi (x-y) (x-y)^{\alpha^+_{2n-2}-\alpha^+_{2n}}(x+3\eps)^{\alpha^+_{2n}-\alpha^+_{2n-1}}  \eps^{\alpha^+_{2n-1}},
\end{align*}
where in the second last inequality we used $x_1-y_1\ge \phi(x-y)$, and in the last inequality we used $\alpha^+_{2n-2}\le \alpha^+_{2n-1}$ and (\ref{g-W}). Since $\eps\le x$, we get (\ref{eqn::boundary_arm_rho_even_upper}).

\no{\bf Lower bound}. We use the local martingale (similar to the one above):
$$ M_t=g_t'(x )^{\alpha^+_{2n-1}}|g_t(x )-W_t|^{\alpha^+_{2n}-\alpha^+_{2n-1}}. $$
The law of $\eta$ weighted by $M_t/M_0$ is SLE$(\kappa;\nu)$ with force point at $x$, where $\nu=\kappa(\alpha^+_{2n}-\alpha^+_{2n-1})$. Let  $\EE^*$ and $\PP^*$ denote the  expectation and probability w.r.t.\ this SLE$(\kappa;\nu)$ process.

Fix $R>1>\delta>0$ and suppose $x-y\le R$. In the proof below, we use $C$ to denote a positive constant, which depends only on $\kappa,n,R,\delta$, and may change values between lines. Let $F(\delta)$ denote the event that $\eta[0,\sigma]\subset B(0,\frac 1\delta)$, $\eta$ does not swallows $x$ at $\sigma$, and $\dist(\eta[0,\sigma],x)\ge \delta x$. Suppose $F(\delta)$ occurs. From Lemma \ref{lem:L-lower}, the image of the unbounded connected component of $L^-_y\sem \eta[0,\sigma]$ under $g_\sigma$ contains $L^-_{y_1;\frac\pi 2}$ for $y_1:=\min\{y-\frac \pi 2,-\frac 2\delta-\frac{2}{\pi\delta^2}\}$. Assume that $\eps\le \frac{\delta x}2$. From Koebe's distortion theorem, the $g_\sigma$-image of  $\pa_{\HH}^+ B(x,\eps) $ encloses $\pa_{\HH}^+ B(x_1,\eps_1) $, where $x_1=g_\sigma(x)$ and $\eps_1=\frac 49 g_\sigma'(x)\eps$. Let $x_2=2(x_1-W_\sigma)$, $y_2=2(y_1-W_\sigma)$, and $\eps_2=2\eps_1$. From DMP and scaling property of SLE and Lemma \ref{lem::comparison}, we get
$$\PP[H_{2n}^\pi(\eps,x,y)|\eta[0,\sigma],F(\delta)]\ge H^\pi_{2n-1}(\eps_2,x_2,y_2),\quad \text{if }\eps\le {\delta x}/2.$$
From \cite[(3.12)]{LawlerConformallyInvariantProcesses}, we get $|x_1-x|\le \frac 3\delta$. So we have
\BGE x_1-y_1\le \max\{x-y+\frac 3\delta+\frac\pi 2,x+\frac 2\delta+\frac{2}{\pi\delta^2}\}\le R+\frac 5{\delta^2}.\label{x1-y1}\EDE
Let $R_2=2(R+\frac 5{\delta^2})$. Then $x_2-y_2\le R_2$, and $R_2$ depends only on $R$ and $\delta$. From the induction hypothesis, on the event $F(\delta)$, we have
$$\PP[H_{2n-1}^\pi(\eps_2,x_2,y_2)]\ge C {x_2^{\alpha^+_{2n-2}-\alpha^+_{2n-1}}\eps_2^{\alpha^+_{2n-1}}} =Cg_\sigma'(x)^{\alpha^+_{2n-1}}(g_\sigma(x)-W_\sigma)^{\alpha^+_{2n-2}-\alpha^+_{2n-1}} \eps^{\alpha^+_{2n-1}}.$$
Thus, if $\eps\le \delta x/2$, then
\begin{align*}
  &\PP[H^\pi_{2n}(\eps,x,y)]\ge \EE[\PP[H^\pi_{2n}(\eps,x,y)|\eta[0,\sigma],F(\delta)]]\ge \EE[1_{F(\delta)} H^\pi_{2n-1}(\eps_2,x_2,y_2)]\\
  \ge & C\eps^{\alpha^+_{2n-1}} \EE[1_{F(\delta)} g_\sigma'(x)^{\alpha^+_{2n-1}}(g_\sigma(x)-W_\sigma)^{\alpha^+_{2n-2}-\alpha^+_{2n-1}}]\\
  =& C\eps^{\alpha^+_{2n-1}}M_0 \EE^*[1_{F(\delta)}(g_\sigma(x)-W_\sigma)^{\alpha^+_{2n-2}-\alpha^+_{2n}}]\ge C x^{\alpha^+_{2n}-\alpha^+_{2n-1}} \eps^{\alpha^+_{2n-1}}\PP^*[F(\delta)],
\end{align*}
where we used $g_\sigma(x)-W_\sigma\le x_1-y_1\le R+\frac 5{\delta^2}$ in the last inequality.

We now find some $\delta,C\in(0,1)$ depending only on $\kappa,n,R$ such that $\PP^*[F(\delta)]\ge C$. After choosing that $\delta$, the constants $C$ we had earlier also depend only on $\kappa,n,R$.
Let $\eta$ be a chordal SLE$(\kappa,\nu)$ curve started from $0$ with force point $x$, and let $W$ be the driving function. Since $\nu\ge (\frac \kappa 2-2)\vee 0$ and $x>0$, $W_t$ is stochastically bounded above by $\sqrt\kappa B_t$,  $\eta$ never swallows $x$, and $\dist(\eta[0,\infty),x)>0$. Let $E_W$ denote the event that $\min\{W_t:0\le t\le \pi^2/4\}<-R-2$ and $\max\{W_t:0\le t\le \pi^2/4\}\le R$, and let $E_B$ denote a similar event with $\sqrt\kappa B_t$ in place of $W_t$. Then the probability of $E_W$ is bounded below by the probability of $E_B$, which is bounded below by some $C_1>0$ depending only on $\kappa,R$. When $E_W$ occurs, from Lemmas \ref{L} and \ref{intersect_L}, we get $\sigma\le \pi^2/4$ and $\eta[0,\sigma]\subset [y,R]\times [0,\pi]\subset B(0,\frac 1{\delta_1})$ for $\delta_1=\frac 1{R+\pi}$. By the scaling property of SLE$(\kappa,\nu)$ curve, we see that $\dist(\eta[0,\infty),x)/x$ is a positive random variable, whose distribution depends only on $\kappa,n$ (but not on $x$). So there is $\delta_2>0$ depending only on $\kappa,n,R$ such that the probability that $\dist(\eta[0,\infty),x)\le \delta_2 x$ is at most $C_1/2$. Let $\delta=\delta_1\wedge \delta_2$ and $C=C_1/2$. Then  $\PP^*[F(\delta)]\ge C$. For such $\delta$, if $\eps\le \delta x/2$, then
$\PP[H^\pi_{2n}(\eps,x,y)]\ge C x^{\alpha^+_{2n}-\alpha^+_{2n-1}} \eps^{\alpha^+_{2n-1}}$. Finally, if $\eps\ge \delta x/2$, then by comparison principle, we have
$$\PP[H^\pi_{2n}(\eps,x,y)]\ge \PP[H^\pi_{2n}(\delta x/2,x,y)]\ge C x^{\alpha^+_{2n}}\ge C x^{\alpha^+_{2n}-\alpha^+_{2n-1}} \eps^{\alpha^+_{2n-1}},$$
where we used $\eps\le x$ and $\alpha^+_{2n-1}\ge 0$ in the last inequality. So we get (\ref{eqn::boundary_arm_rho_even_lower}) as long as $\eps\le x$.

\no{\bf From $2n$ to $2n+1$}. Suppose   (\ref{eqn::boundary_arm_rho_even_upper}) and (\ref{eqn::boundary_arm_rho_even_lower}) hold. We use the local martingale
$$ M_t=g_t'(x)^{\alpha^+_{2n+1}}(g_t(x)-W_t)^{\alpha^+_{2n}-\alpha^+_{2n+1}} =g_t'(x)^{\alpha^+_{2n-1}}(g_t(x)-W_t)^{\alpha^+_{2n}-\alpha^+_{2n-1}}\Upsilon_t^{\alpha^+_{2n-1}-\alpha^+_{2n+1}} J_t^{\alpha^+_{2n+1}-\alpha^+_{2n-1}},$$ 
which is similar to the one used in the proof of Proposition \ref{prop::sle_boundary_estimate} (recall (\ref{eqn::from2nto2n+1})).  The law of $\eta$ weighted by $M_t/M_0$ is SLE$(\kappa;\nu)$ with force point at $x$, where $\nu=\kappa(\alpha^+_{2n}-\alpha^+_{2n+1})$. Let  $\EE^*$ and $\PP^*$ denote the  expectation and probability w.r.t.\ this SLE$(\kappa;\nu)$ process. Let $\tau_r$ be the hitting time at $\pa_{\HH}^+ B(x,r)$ for any $r>0$. Recall that $\Upsilon_{\tau_r}\asymp r$.

\no{\bf Upper bound}. First, suppose $6\eps\ge x$. Then we use the estimate
$$\PP[H_{2n+1}^\pi(\eps,x,y)]\le \PP[H_{2n}^\pi(\eps,x,y)] \lesssim \frac{  x^{\alpha_{2n}^+-\alpha_{2n-1}^+}\eps^{\alpha_{2n-1}^+}}
{\prod_{j=1}^{n}\phi^{(2n-2j)} (x-y)^{\alpha_{2j}^+-\alpha_{2j-2}^+}}
\lesssim \frac{x^{\alpha_{2n}^+-\alpha_{2n+1}^+}\eps^{\alpha_{2n+1}^+}}{\prod_{j=1}^{n}\phi^{(2n-2j-1)} (x-y)^{\alpha_{2j}^+-\alpha_{2j-2}^+}},$$
where we used $\alpha_{2j}^+\ge \alpha_{2j-2}^+$, $\phi^{(2n-2j)} (x-y)\le \phi^{(2n-2j-1)} (x-y)$, $\alpha_{2n+1}^+\ge \alpha_{2n-1}^+$, and $\eps\gtrsim x$.
So we get (\ref{eqn::boundary_arm_rho_odd_upper}).

Now suppose $6\eps<x$. Let $\sigma=\tau_{6\eps}$. Then $\eta_\sigma\in \pa_{\HH}^+ B(x,6\eps)$. Let $\eps_1=g_\sigma'(x)\eps/(1-1/6)^2$, $x_1=g_\sigma(x)$, $y_1=\sup\{\Re g_\sigma(z):z\in \pa_{\HH}^\sigma L^-_y\}$, where $\pa_{\HH}^\sigma L^-_y$ is the unbounded connected component of $\pa_{\HH}^-L^-_y\sem \eta[0,\sigma]$. Then $g_\sigma(\pa_{\HH}^\sigma L^-_y)\subset L^-_{y_1}$ because $g_\sigma$ decreases the imaginary part. From Koebe's distortion theorem, the image of $\pa_{\HH}^+ B(x,\eps)$ under $g_\sigma$ is enclosed by $\pa_{\HH}^+ B({x_1,\eps_1})$.

Since the semicircle $\pa_{\HH}^+ B(x,6\eps)$ disconnects the union of $[0,x)$ and the righthand side of $\eta[0,\sigma)$ from $\infty$ in $\HH\sem\eta[0,\sigma]$, by the conformal invariance and comparison principle for harmonic measure, we have
\begin{align*}
  &\hm(\infty,\HH;[x-12\eps,x+12\eps])=\hm(\infty,\HH,\pa_{\HH}^+ B(x,6\eps))\ge \hm(\infty,\HH\sem\eta[0,\sigma];\pa_{\HH}^+ B(x,6\eps))\\
  \ge & \hm(\infty,\HH\sem\eta[0,\sigma];[0,x]\cup\mbox{ RHS of }\eta[0,\sigma])=\hm(\infty,\HH;[W_\sigma,x_1]).
\end{align*}
Thus, $x_1-W_\sigma\le 24\eps$. Since $x_1-y_1\ge \phi(x-y)\ge \phi^{(2n)}(x-y)\ge 2^{5n}\eps>24\eps$, we get $y_1-W_\sigma<0$.
This means that $\pa_{\HH}^- L^-_{y_1}$ disconnects $W_\sigma$ from $g_\sigma(\pa_{\HH}^\sigma L^-_y)$. Besides, since $g_\sigma'(x)\in(0,1)$, we have $x_1-y_1>\eps_1$. So we may apply Lemma \ref{lem::comparison} and use DMP of SLE to get
$$\PP[H^\pi_{2n+1}(\eps,x,y)|\eta[0,\sigma]]\le H^\pi_{2n}(\eps_1,x_1-W_\sigma,y_1-W_\sigma).$$

We assumed that $(\eps,x,y)$ satisfy $2^{5n}\eps <\phi^{(2n)}(x-y)$. Since $g_\sigma'\le 1$ on $\R\sem K_\sigma$, we have $\eps_1\le 4\eps$. Thus,
$$2^{5n-2}\eps_1\le 2^{5n}\eps< \phi^{(2n)}(x-y)\le \phi^{(2n-1)}(x_1-y_1).$$
From Koebe's $1/4$ theorem, we get $x_1-W_\sigma\ge 6g_\sigma'(x)\eps/4 \ge g_\sigma'(x)\eps/(1-1/6)^2=\eps_1$.
This means that $(\eps_1,x_1-W_\sigma,y_1-W_\sigma)$ satisfy the conditions for (\ref{eqn::boundary_arm_rho_even_upper}). From the induction hypothesis, we get
\begin{align*}
  &\PP[H^\pi_{2n}(\eps_1,x_1-W_\sigma,y_1-W_\sigma)]\lesssim f_n(x_1-y_1) (x_1-W_\sigma)^{\alpha^+_{2n}-\alpha^+_{2n-1}}\eps_1^{\alpha^+_{2n-1}}\\
  \asymp &  f_n(x_1-y_1)   \eps^{\alpha^+_{2n-1}} (g_\sigma(x )-W_\sigma)^{\alpha^+_{2n}-\alpha^+_{2n-1}}g_\sigma'(x )^{\alpha^+_{2n-1}},
\end{align*}
where $f_n(x_1-y_1)$ is the factor coming from the denominator of (\ref{eqn::boundary_arm_rho_even_upper}).
Thus,
\begin{align*}
  &\PP[H^\pi_{2n+1}(\eps,x,y)]=\EE[\PP[H^\pi_{2n+1}(\eps,x,y)|\eta[0,\sigma]]]\le \EE[H^\pi_{2n}(\eps_1,x_1-W_\sigma,y_1-W_\sigma)]\\
  \lesssim & f_n(x_1-y_1)   \eps ^{\alpha^+_{2n-1}} \EE[(g_\sigma(x)-W_\sigma)^{\alpha^+_{2n}-\alpha^+_{2n-1}}g_\sigma'(x)^{\alpha^+_{2n-1}}]\\
  \lesssim & f_n\circ \phi (x-y)  \eps^{\alpha^+_{2n-1}}x^{\alpha^+_{2n}-\alpha^+_{2n-1}} \eps^{u_1(\alpha^+_{2n})+\alpha^+_{2n}-\alpha^+_{2n-1}}
  = f_n\circ \phi (x-y) x^{\alpha^+_{2n}-\alpha^+_{2n-1}} \eps^{\alpha^+_{2n+1}}
\end{align*}
where we used Proposition \ref{Prop-Euclidean}, the scaling invariance  of SLE, and ((3.8)). Then we get (\ref{eqn::boundary_arm_rho_odd_upper}) for $2n+1$.

\no{\bf Lower bound}. We fix $R,\delta>0$ and suppose $x-y\le R$. In the proof below, we use $C$ to denote a positive constant, which depends only on $\kappa,n,R,\delta$, and may change values between lines. Let $\sigma=\tau_{\eps}$.   From Koebe's $1/4$ theorem, the $g_\sigma$-image of  $\pa_{\HH}^+ B(x,\eps) $ encloses  $\pa_{\HH}^+ B(x_1,\eps_1)$, where $x_1=g_\sigma(x)$ and $\eps_1=g_\sigma'(x)\eps/4$. Let $F(\delta)$ denote the event that $\sigma<\infty$, $x$ is not swallowed at $\sigma$, and $\eta[0,\sigma]\subset B(0,\frac 1\delta)$. Suppose $F(\delta)$ occurs. From Lemma \ref{lem:L-lower}, the image of the unbounded connected component of $L^-_y\sem \eta[0,\sigma]$ under $g_\sigma$ contains $L^-_{y_1;\frac\pi 2}$ for $y_1:=\min\{y-\frac \pi 2,-\frac 2\delta-\frac{2}{\pi\delta^2}\}$. Let $x_2=2(x_1-W_\sigma)$, $y_2=2(y_1-W_\sigma)$, and $\eps_2=2\eps_1$. From DMP and scaling property of SLE and Lemma \ref{lem::comparison}, we get
$$\PP[H^\pi_{2n+1}(\eps,x,y)|\eta[0,\sigma],F(\delta)]\ge H^\pi_{2n}(\eps_2,x_2,y_2).$$
Using the same argument as around (\ref{x1-y1}), we get $x_2-y_2\le R_2:=2(R+\frac 5{\delta^2})$. From the induction hypothesis, on the event $F(\delta)$, we have
$$\PP[H^\pi_{2n}(\eps_2,x_2,y_2)]\ge C {x_2^{\alpha^+_{2n}-\alpha^+_{2n-1}}\eps_2^{\alpha^+_{2n-1}}} =Cg_\sigma'(x)^{\alpha^+_{2n-1}}(g_\sigma(x)-W_\sigma)^{\alpha^+_{2n}-\alpha^+_{2n-1}} \eps^{\alpha^+_{2n-1}}.$$
Thus,
\begin{align}
  &\PP[H^\pi_{2n+1}(\eps,x,y)]\ge \EE[\PP[H^\pi_{2n+1}(\eps,x,y)|\eta[0,\sigma],F(\delta)]]\ge \EE[1_{F(\delta)} H^\pi_{2n}(\eps_2,x_2,y_2)]\nonumber\\
  \ge & C\eps^{\alpha^+_{2n-1}} \EE[1_{F(\delta)} g_\sigma'(x)^{\alpha^+_{2n-1}}(g_\sigma(x)-W_\sigma)^{\alpha^+_{2n}-\alpha^+_{2n-1}}]\nonumber\\
   =& C\eps^{\alpha^+_{2n-1}}M_0 \EE^*[1_{F(\delta)}  J_\sigma^{\alpha^+_{2n-1}-\alpha^+_{2n+1}} \Upsilon_\sigma^{\alpha^+_{2n+1}-\alpha^+_{2n-1}}]\ge C x^{\alpha^+_{2n}-\alpha^+_{2n-1}}\eps^{\alpha^+_{2n-1}} \PP^*[F(\delta)],\label{PF-del}
\end{align}
where in the last inequality we used $\Upsilon_\sigma\asymp \eps$, $J_\sigma\in(0,1]$, and $\alpha^+_{2n-1}-\alpha^+_{2n+1}\le 0$.

We now find some $\delta,C>0$ depending only on $\kappa,n,R$ such that $\PP^*[F(\delta)]\ge C$. After choosing that $\delta$, the constants $C$ we had earlier also depend only on $\kappa,n,R$. Let $\eta$ be a chordal SLE$(\kappa,\nu)$ curve started from $0$ with force point $x$. Since $\nu\le\kappa/2-4$, the curve $\eta$ goes all the way to $x$ in finite time, and so is bounded. Moreover, $\eta$ does not swallow $x$ before it reaches $x$. By scaling property, $\diam(\eta)/x$ is a bounded random variable, whose distribution depends only on $\kappa,n$. Thus, there are constants $\delta_1,C >0$ depending only on $\kappa,n$, such that $\PP^*[F(\delta_1/x)]\ge C $. Then we let $\delta=\delta_1/R$. Since $x\le x-y\le R$, we have $F(\delta_1/x)\subset F(\delta)$. Using such $\delta$ and applying (\ref{PF-del}), we get (\ref{eqn::boundary_arm_rho_odd_lower}) for $2n+1$.
\end{proof}

\bibliographystyle{alpha}
\bibliography{bibliography}

\newcommand{\etalchar}[1]{$^{#1}$}
\begin{thebibliography}{CDCH{\etalchar{+}}14}

\bibitem[Ahl06]{AhlforsQuasiconformal}
Lars~V Ahlfors.
\newblock Lectures on quasiconformal mappings. with supplemental chapters by cj
  earle, i. kra, m. shishikura and jh hubbard. university lecture series, 38.
\newblock {\em American Mathematical Society, Providence, RI}, 1(4):12, 2006.

\bibitem[Ahl10]{AhlforsConformalInvariants}
Lars~Valerian Ahlfors.
\newblock {\em Conformal invariants: topics in geometric function theory},
  volume 371.
\newblock American Mathematical Soc., 2010.

\bibitem[AS08]{AlbertsSheffieldDimension}
Tom Alberts and Scott Sheffield.
\newblock Hausdorff dimension of the {SLE} curve intersected with the real
  line.
\newblock {\em Electron. J. Probab.}, 13:no. 40, 1166--1188, 2008.

\bibitem[CDCH13]{ChelkakDuminilHonglerCrossingprobaFKIsing}
Dmitry Chelkak, Hugo Duminil-Copin, and Cl{\'e}ment Hongler.
\newblock Crossing probabilities in topological rectangles for the critical
  planar fk-ising model.
\newblock {\em arXiv preprint arXiv:1312.7785}, 2013.

\bibitem[CDCH{\etalchar{+}}14]{CDCHKSConvergenceIsingSLE}
Dmitry Chelkak, Hugo Duminil-Copin, Cl{\'e}ment Hongler, Antti Kemppainen, and
  Stanislav Smirnov.
\newblock Convergence of ising interfaces to schrammʼs sle curves.
\newblock {\em Comptes Rendus Mathematique}, 352(2):157--161, 2014.

\bibitem[CS12]{ChelkakSmirnovIsing}
Dmitry Chelkak and Stanislav Smirnov.
\newblock Universality in the 2{D} {I}sing model and conformal invariance of
  fermionic observables.
\newblock {\em Invent. Math.}, 189(3):515--580, 2012.

\bibitem[Dub09]{DubedatSLEDuality}
Julien Dub{\'e}dat.
\newblock Duality of {S}chramm-{L}oewner evolutions.
\newblock {\em Ann. Sci. \'Ec. Norm. Sup\'er. (4)}, 42(5):697--724, 2009.

\bibitem[Dup03]{DuplantierFractalGeometry}
Bertrand Duplantier.
\newblock Conformal fractal geometry and boundary quantum gravity.
\newblock {\em arXiv preprint math-ph/0303034}, 2003.

\bibitem[Kes87]{KestenScalingRelationPercolation}
Harry Kesten.
\newblock Scaling relations for 2d-percolation.
\newblock {\em Communications in Mathematical Physics}, 109(1):109--156, 1987.

\bibitem[Law05]{LawlerConformallyInvariantProcesses}
Gregory~F. Lawler.
\newblock {\em Conformally invariant processes in the plane}, volume 114 of
  {\em Mathematical Surveys and Monographs}.
\newblock American Mathematical Society, Providence, RI, 2005.

\bibitem[Law14]{LawlerMinkowskiSLERealLine}
Gregory~F. Lawler.
\newblock Minkowski content of the intersection of a {S}chramm-{L}oewner
  evolution ({SLE}) curve with the real line.
\newblock {\em Available from http://www. math. uchicago. edu/lawler/papers.
  html}, 2014.

\bibitem[LSW01]{LawlerSchrammWernerExponent1}
Gregory~F. Lawler, Oded Schramm, and Wendelin Werner.
\newblock Values of {B}rownian intersection exponents. {I}. {H}alf-plane
  exponents.
\newblock {\em Acta Math.}, 187(2):237--273, 2001.

\bibitem[LSW04]{LawlerSchrammWernerLERWUST}
Gregory~F. Lawler, Oded Schramm, and Wendelin Werner.
\newblock Conformal invariance of planar loop-erased random walks and uniform
  spanning trees.
\newblock {\em Ann. Probab.}, 32(1B):939--995, 2004.

\bibitem[MS12]{MillerSheffieldIG1}
Jason Miller and Scott Sheffield.
\newblock Imaginary {G}eometry {I}: {I}nteracting {SLE}s.
\newblock 2012.

\bibitem[RS05]{RohdeSchrammSLEBasicProperty}
Steffen Rohde and Oded Schramm.
\newblock Basic properties of {SLE}.
\newblock {\em Ann. of Math. (2)}, 161(2):883--924, 2005.

\bibitem[Sch00]{SchrammFirstSLE}
Oded Schramm.
\newblock Scaling limits of loop-erased random walks and uniform spanning
  trees.
\newblock {\em Israel J. Math.}, 118:221--288, 2000.

\bibitem[Smi01]{SmirnovPercolationConformalInvariance}
Stanislav Smirnov.
\newblock Critical percolation in the plane: conformal invariance, {C}ardy's
  formula, scaling limits.
\newblock {\em C. R. Acad. Sci. Paris S\'er. I Math.}, 333(3):239--244, 2001.

\bibitem[SS09]{SchrammSheffieldDiscreteGFF}
Oded Schramm and Scott Sheffield.
\newblock Contour lines of the two-dimensional discrete {G}aussian free field.
\newblock {\em Acta Math.}, 202(1):21--137, 2009.

\bibitem[SW01]{SmirnovWernerCriticalExponents}
Stanislav Smirnov and Wendelin Werner.
\newblock Critical exponents for two-dimensional percolation.
\newblock {\em Math. Res. Lett.}, 8(5-6):729--744, 2001.

\bibitem[SW05]{SchrammWilsonSLECoordinatechanges}
Oded Schramm and David~B. Wilson.
\newblock S{LE} coordinate changes.
\newblock {\em New York J. Math.}, 11:659--669 (electronic), 2005.

\bibitem[Wu16a]{WuAlternatingArmIsing}
Hao Wu.
\newblock Alternating arm exponents for the critical planar ising model.
\newblock {\em arXiv preprint arXiv:1605.00985}, 2016.

\bibitem[Wu16b]{WuPolychromaticArmFKIsing}
Hao Wu.
\newblock {P}olychromatic {A}rm {E}xpoentns for the {C}ritical {P}lanar
  {FK}-{I}sing {M}odel.
\newblock {\em arXiv:1604.06639}, 2016.

\bibitem[Zha08]{ZhanLERW}
Dapeng Zhan.
\newblock The scaling limits of planar lerw in finitely connected domains.
\newblock {\em Ann. Probab.}, 36(2):467--529, 2008.

\bibitem[Zha16]{ZhanErgodicityTipSLE}
Dapeng Zhan.
\newblock Ergodicity of the tip of an sle curve.
\newblock {\em Probability Theory and Related Fields}, 164(1-2):333--360, 2016.

\end{thebibliography}
\smallbreak
\noindent Hao Wu\\
\noindent NCCR/SwissMAP, Section de Math\'{e}matiques, Universit\'{e} de Gen\`{e}ve, Switzerland\\
\noindent\textit{and}
Yau Mathematical Sciences Center, Tsinghua University, China\\
\noindent hao.wu.proba@gmail.com
\smallbreak
\noindent Dapeng Zhan\\
\noindent {Department of Mathematics, Michigan State University, United States of America}\\
\noindent zhan@math.msu.edu

\end{document}